\def\thm@space@setup{%
 \thm@preskip=\parskip \thm@postskip=0pt
}
\def\th@remark{%
  \thm@headfont{\itshape}%
  \normalfont 
  \thm@preskip\parskip \thm@postskip=0pt
}
\renewcommand{\PrintDOI}[1]{%
  \href{http://dx.doi.org/#1}{{\tt DOI:#1}}%
}
\renewcommand{\eprint}[1]{#1}
\numberwithin{equation}{section}
\newtheorem{Theorem}{Theorem}[section]
\newtheorem*{Theorem*}{Theorem}
\newtheorem{Def}[Theorem]{Definition}
\newtheorem{Lem}[Theorem]{Lemma}
\newtheorem{Prop}[Theorem]{Proposition}
\newtheorem{Cor}[Theorem]{Corollary}
\newtheorem{Rem}[Theorem]{Remark}
\newcommand\bp{\begin{proof}}
\newcommand\ep{\end{proof}}
\mathchardef\mhyph="2D
\DeclareMathOperator{\Ad}{\mathrm{Ad}}
\DeclareMathOperator{\End}{\mathrm{End}}
\DeclareMathOperator{\fin}{\mathrm{f}}
\DeclareMathOperator{\Hom}{\mathrm{Hom}}
\DeclareMathOperator{\id}{\mathrm{id}}
\DeclareMathOperator{\Rep}{\mathrm{Rep}}
\DeclareMathOperator{\Tr}{\mathrm{Tr}}
\DeclareMathOperator{\adm}{\mathrm{adm}}
\DeclareMathOperator{\Irr}{\mathrm{Irr}}
\DeclareMathOperator{\Ker}{\mathrm{Ker}}
\DeclareMathOperator{\Spec}{\mathrm{Spec}}
\DeclareMathOperator{\Lin}{\mathrm{Lin}}
\DeclareMathOperator{\Prod}{\prod}
\DeclareMathOperator{\Ann}{\mathrm{Ann}}
\DeclareMathOperator{\Mod}{\mathrm{Mod}}
\DeclareMathOperator{\coinv}{\mathrm{co}}
\DeclareMathOperator{\bounded}{\mathrm{b}}
\newcommand{\cop}{\mathrm{cop}}
\newcommand{\op}{\mathrm{op}}
\newcommand{\reg}{\mathrm{reg}}
\newcommand{\msE}{\mathscr{E}}
\newcommand{\msF}{\mathscr{F}}
\newcommand{\msG}{\mathscr{G}}
\newcommand{\msI}{\mathscr{I}}
\newcommand{\msM}{\mathscr{M}}
\newcommand{\msU}{\mathscr{U}}
\newcommand{\mfk}{\mathfrak{k}}
\newcommand{\mfsl}{\mathfrak{sl}}
\newcommand{\mfsu}{\mathfrak{su}}
\newcommand{\mcD}{\mathcal{D}}
\newcommand{\mcE}{\mathcal{E}}
\newcommand{\mcG}{\mathcal{G}}
\newcommand{\Hsp}{\mathcal{H}}
\newcommand{\Gsp}{\mathcal{G}}
\newcommand{\mcI}{\mathcal{I}}
\newcommand{\mcO}{\mathcal{O}}
\newcommand{\mcU}{\mathcal{U}}
\newcommand{\C}{\mathbb{C}}
\newcommand{\G}{\mathbb{G}}
\newcommand{\Hh}{\mathbb{H}}
\newcommand{\R}{\mathbb{R}}
\newcommand{\X}{\mathbb{X}}
\newcommand{\Z}{\mathbb{Z}}
\newcommand{\opp}{\mathrm{op}}
\title{Invariant integrals on coideals and their Drinfeld doubles}
\author{K. De Commer and J. R. Dzokou Talla}
\email{kenny.de.commer@vub.be}
\email{joel.right.dzokou.talla@vub.be}
\address{Vrije Universiteit Brussel}
\begin{document}

\begin{abstract}
Let $A$ be a CQG Hopf $*$-algebra, i.e.~ a Hopf $*$-algebra with a positive invariant state. Given a unital right coideal $*$-subalgebra $B$ of $A$, we provide conditions for the existence of a relatively invariant integral on the stabilizer coideal $B^{\perp}$ inside the dual discrete multiplier Hopf $*$-algebra of $A$. Given such a relatively invariant integral, we show how it can be extended to a relatively invariant integral on the Drinfeld double coideal. We moreover show that the representation theory of the Drinfeld double coideal has a monoidal structure. As an application, we determine the relatively invariant integral for the coideal $*$-algebra $\mcU_q(\mfsl(2,\R))$ constructed from the Podle\'{s} spheres.
\end{abstract}



\maketitle

\section*{Introduction}

Let $(A,\Delta)$ be a CQG (compact quantum group) Hopf $*$-algebra \cite{DK94}, to be interpreted as the algebra $A = \mcO(\G)$ of regular functions (=matrix coefficient functions) on a compact quantum group $\G$. Let $B \subseteq A$ be a unital right coideal $*$-subalgebra, so we interpret $B =\mcO(\X)$ as an algebra of functions on a compact quantum space $\X$ with transitive right action of $\G$. As we have an embedding $B \subseteq A$, we can say that $\X$ has a classical point $\bullet$ induced by the counit, so that we can view $\X = \Hh\backslash \G$ with $\Hh$ the stabilizer of $\bullet$. In general, $\Hh$ will not exist as an actual quantum subgroup, i.e.~ we do not have a Hopf $*$-algebra $\mcO(\Hh)$ with surjective Hopf $*$-algebra morphism $\mcO(\G) \twoheadrightarrow \mcO(\Hh)$. However, $\mcO(\Hh)$ still exists as a special type of coalgebra. 

Under the above assumptions, $A$ is cosemisimple, i.e.~ as a coalgebra it is simply a direct sum of finite-dimensional coalgebras dual to finite-dimensional matrix algebras. Hence dual to $A$ one has the $*$-algebra $\mcU = \widehat{A} =  \C[\G]$ consisting of functionals vanishing on all but a finite number of the above coalgebras. It can according to the context be viewed as a convolution algebra of $\G$ or as the algebra of continuous functions of compact support on the discrete quantum  group $\widehat{\G}$ which is Pontryagin-dual to $\G$. When $A$ is infinite-dimensional, $\mcU$ will not be a Hopf $*$-algebra but a \emph{multiplier Hopf $*$-algebra}, or more precisely an \emph{algebraic quantum group} \cite{VD94,VD96,VD98}. Because of its concrete nature however as a direct sum of matrix algebras, this structure can also be dealt with in a more ad hoc way. Dual to the coideal $B =\mcO(\Hh\backslash \G)$ is a left coideal $\mcI = B^{\perp} = \C[\Hh]$ inside the \emph{multiplier} algebra of $\mcU$, which we call the \emph{stabilizer coideal}. 

Recently, there has been a lot of activity in determining properties of coideals in the context of invariant integrals \cite{Tom07,FS09,KK17,Kas18,Chi18,CKS20}. For example, a necessary and sufficient condition for the existence of an invariant integral on $\mcI$ turns out to be that the coideal $B$ (or its stabilizer $\mcI$) is invariant under the antipode squared. Unfortunately, this is not always the case in all examples of interest. An important situation where this condition is not satisfied is for the \emph{Podle\'{s} spheres} at generic parameter. We will show however that the situation can be amended by considering \emph{relatively} invariant integrals (cf.~ \cite{Kas18} for similar notions). We then show that a \emph{tracial} relatively invariant integral on $\mcI$ exists when $B\subseteq A$ forms a \emph{co-Gelfand pair}, meaning that the associated unital $*$-algebra of spherical functions is commutative.  

We then consider also the Drinfeld double of a CQG coideal, which gives an analytic version of the purely algebraic construction presented in \cite{DCDz21}. This construction is important, as it gives an answer to the longstanding open problem on how to quantize real semisimple Lie groups. Indeed, it can be motivated that if $L$ is a real semisimple Lie group, realized as the real form of a simply connected, connected complex Lie group $G$, then a quantization of its convolution algebra $C_c(L)$ can be obtained as the Drinfeld double of the CQG coideal $\mcO_q(U/K)$ of $\mcO_q(U)$, where $U$ is the maximal compact subgroup of $G$ and $\mcO_q(U/K)$ is the quantum homogeneous space associated to the Letzter-Kolb coideal $*$-subalgebra \cite{Let99,Let02,Kol14,DCNTY19,DCM20} for the symmetric pair $K = L\cap U \subseteq U$. Note that this Drinfeld double coideal is \emph{a priori} only a coideal, so its category of representations would seem to lack a monoidal structure. As we demonstrate however, there \emph{is} a monoidal structure present, although its construction is not straightforward. In this paper, we do not provide the details of the above construction in general, but as a guiding example we explain the constructions in the case of quantum $SL(2,\R)$.  For further motivation, we refer to \cite{DCDz21}.

The contents of this paper are as follows. In the \emph{first section}, we recall the main preliminaries on quantum groups, and prove equivalent conditions for a coideal associated to a discrete quantum group to admit a relatively invariant integral. In the \emph{second section}, we introduce the Drinfeld double of a CQG coideal and its stabilizer dual, and show that this Drinfeld dual coideal admits as well a relatively invariant integral and associated regular representation when the original coideal does. We further show how the representation theory of the Drinfeld double coideal admits a monoidal structure. In the \emph{third section}, we apply our results to the case of quantum $SL(2,\R)$, constructed as the quantum double of a Podle\'{s} sphere and its stabilizer coideal. 
 
\emph{Acknowledgements}: The work of K. De Commer and J. R. Dzokou Talla was supported by the FWO grant G032919N. We thank A. Brochier, D. Jordan, D. Nikshych, C. Voigt and R. Yuncken for fruitful discussions. We also thank S. Neshveyev for setting us straight on using the better suited terminology `relatively invariant' instead of `quasi-invariant'. 

\section{Invariant integrals on coideals}

\subsection{CQG Hopf $*$-algebras and their duals}\label{SecBegin}

Let $(A,\Delta)$ be a CQG (compact quantum group) Hopf $*$-algebra, so $A$ is generated as an algebra by the matrix coefficients of its unitary corepresentations \cite{DK94}.  Equivalently, $A$ is a Hopf $*$-algebra with a (necessarily unique) invariant state $\Phi_A: A\rightarrow \C$, so 
\[
(\id\otimes \Phi_A)\Delta(a) = \Phi_A(a)1 = (\Phi_A\otimes \id)\Delta(a),\qquad \forall a\in A,
\]
\[
\Phi_A(1) = 1,\qquad \Phi_A(a^*a)\geq 0,\qquad \forall a\in A. 
\]
One can interpret $A = \mcO(\G)$ with $\G$ a compact quantum group. We denote by $\varepsilon$ the counit of $A$, and by $S_A$ its antipode. We further use the (unsummed) Sweedler notation $\Delta(a) = a_{(1)}\otimes a_{(2)}$. 

We refer to \cite[Chapter 1]{NT14} for all information on CQG Hopf $*$-algebras we will need. We recall the essentials in what follows. First of all, $A$ is cosemisimple as a coalgebra, and we choose a maximal family of mutually inequivalent irreducible (right) comodules $(\Hsp_{\alpha},\delta_{\alpha})$. We can assume moreover that these comodules are unitary, so the $\Hsp_{\alpha}$ are finite-dimensional Hilbert spaces and, using the usual Sweedler notation $\delta_{\alpha}(v) = v_{(0)}\otimes v_{(1)}$, we have
\[
\langle v_{(0)},w_{(0)}\rangle v_{(1)}^*w_{(1)} = \langle v,w\rangle 1,\qquad \forall v,w\in \Hsp_{\alpha}. 
\]

Let $\msU =\Lin(A,\C)$ be the vector space dual of $A$, which is a unital $*$-algebra under the convolution product and $*$-structure given by 
\[
(\omega\chi)(a) = (\omega \otimes \chi)\Delta(a),\qquad \omega^*(a) = \overline{\omega(S(a)^*)},\qquad a\in A,\omega,\chi \in \msU. 
\]
We view $\msU = \Lin(A,\C)$ as a topological space with the topology of pointwise convergence. It is sometimes convenient to use the abstract notation
\[
\tau: A \times \msU\rightarrow \C,\qquad (a,\omega)\mapsto \tau(a,\omega) = \omega(a).
\]
for the natural pairing between $\msU$ and $A$.

Any finite-dimensional unitary right $A$-comodule $(\Hsp,\delta)$ leads to a unital $*$-representation of $\msU$ on $\Hsp$ via
\[
\pi(x)v = \tau(v_{(1)},x)v_{(0)}.
\]
We call these $\msU$-representations \emph{admissible}. If conversely $\pi$ is an admissible $*$-representation of $\msU$ and $\xi,\eta\in \Hsp_{\pi}$, there exists a unique $\pi_{\xi,\eta} = \pi(\xi,\eta) \in A$ such that
\[
\tau(\pi(\xi,\eta),x) = \langle \xi,\pi(x)\eta\rangle, \qquad x \in \msU.
\]
We refer to $\pi(\xi,\eta)$ as a \emph{matrix coefficient of $\pi$}. We then also interpret
\[
\pi \in B(\Hsp)\otimes A,\qquad (\id\otimes \tau(-,h))\pi = \pi(h),\qquad h \in \msU.   
\]

We choose a  maximal family $(\Hsp_{\alpha},\pi_{\alpha})$ of mutually inequivalent irreducible admissible unital $*$-representations of $\msU$, and we write $\Irr_{\adm}(\msU) =\{\alpha\}$ for the index set. We can concretely identify $\msU$ with a direct product $*$-algebra
\begin{equation}\label{EqLinDual}
\msU\cong \Prod_{\alpha\in \Irr_{\adm}(\msU)} B(\Hsp_{\alpha}),\qquad \pi_{\alpha}: \msU \rightarrow B(\Hsp_{\alpha}),\quad \omega\mapsto (v\mapsto \pi_{\alpha}(\omega)v =  \tau(v_{(1)},\omega)v_{(0)}).
\end{equation}
The topology of $\msU$ then coincides with the product topology on $\Prod_{\alpha} \End(\Hsp_{\alpha})$. In the following, we will switch between these two pictures whenever convenient. 

We write 
\begin{equation}\label{EqAlgComplProd}
\msU \widehat{\otimes}\msU =  \Prod_{\alpha,\alpha'} \End(\Hsp_{\alpha})\otimes \End(\Hsp_{\alpha'}) \cong \Lin(A\otimes A,\C)
\end{equation} 
for the topologically completed tensor product. Then the product on $A$ dualizes to a unital $*$-homomorphism 
\[
\Delta: \msU \rightarrow \msU \widehat{\otimes}\msU. 
\]
This coproduct allows to take the tensor product of admissible representations. 

There further exists a unique $\delta_A \in \msU$ such that, for each $\alpha \in \Irr_{\adm}(\msU)$, the operator $\pi_{\alpha}(\delta_{A})\in B(\Hsp_{\alpha})$ is positive and invertible with $\Tr(\pi_{\alpha}(\delta_{A})^{1/2}) = \Tr(\pi_{\alpha}(\delta_A)^{-1/2})$ and, writing this number as $\dim_q(\alpha)$, one has the following Peter-Weyl orthogonality relations between matrix coefficients of irreducible admissible $*$-representations of $\msU$:  
\begin{equation}\label{EqPW1}
\Phi_A(\pi_{\alpha}(\xi_1,\eta_1)\pi_{\alpha'}(\xi_2,\eta_2)^*) =\delta_{\alpha,\alpha'} \frac{\langle \xi_1,\xi_2\rangle \langle \eta_2,\delta_{A}^{-1/2}\eta_1\rangle}{\dim_q(\alpha)},
\end{equation}
\begin{equation}\label{EqPW2}
\Phi_A(\pi_{\alpha'}(\xi_2,\eta_2)^*\pi_{\alpha}(\xi_1,\eta_1)) =\delta_{\alpha,\alpha'} \frac{\langle \xi_1,\delta_A^{1/2}\xi_2\rangle \langle \eta_2,\eta_1\rangle}{\dim_q(\alpha)}.
\end{equation}
In general, we can make sense of $\pi_{\alpha}(\delta_A)^z$ for any complex $z$ by positivity of $\pi_{\alpha}(\delta_A)$, and there exists then a unique $\delta_A^z \in \msU$ so that $\pi_{\alpha}(\delta_A^z) = \pi_{\alpha}(\delta_A)^z$ for all $\alpha\in \Irr_{\adm}(\msU)$ and $z\in \C$. 

It follows immediately from the orthogonality relations that $\Phi_A$ is faithful, i.e.~ both maps $A \rightarrow \msU$ given by
\begin{equation}\label{EqIsoFunct}
a\mapsto \Phi_A(a-),\qquad a\mapsto \Phi_A(-a)
\end{equation}
are injective. Moreover, they have the same range which we will write $\mcU \subseteq \msU$. Then $\mcU$ is a (non-unital) $*$-subalgebra of $\msU$, equal under the identification \eqref{EqLinDual} to $\oplus_{\alpha} \End(\Hsp_{\alpha})$. In particular, $\mcU$ is an $A$-bimodule by 
\[
\tau(b,x\lhd a) = \tau(ab,x),\qquad \tau(b,a\rhd x) = \tau(ba,x),\qquad x\in \mcU,a,b\in A.
\]

The linear map
\begin{equation}\label{EqModAntSq}
\sigma_A: A \rightarrow A,\qquad \sigma_A(\pi_{\alpha}(\xi,\eta)) = \pi_{\alpha}(\delta_A^{-1/2}\xi,\delta_A^{-1/2}\eta)
\end{equation}
satisfies $\Phi_A(ab) = \Phi_A(b\sigma_A(a))$ for all $a,b\in A$, and hence is an automorphism of $A$ called the \emph{modular automorphism}. We can then identify 
\begin{equation}\label{EqDelasFun}
\delta_A: A \rightarrow \C,\quad a\mapsto \varepsilon(\sigma_A^{-1}(a)), 
\end{equation}
so that it is clear that $\delta_A$ is a character on $A$, and in particular $S_A(\delta_A) = \delta_A^{-1}$. We further have that the antipode squared acts as follows on matrix coefficients:
\begin{equation}\label{EqAntiSqu}
S_A^2(\pi(\xi,\eta)) = \pi(\delta_A^{-1/2}\xi,\delta_A^{1/2}\eta).  
\end{equation}

Note now that the antipode $S_A$ of $A$ dualizes to an anti-isomorphism of algebras $S: \msU \rightarrow \msU$ satisfying $S(x)^* = S^{-1}(x^*)$ for $x\in \msU$. One then has 
\begin{equation}\label{EqAntiSqu2}
S^2(x) = \delta_A^{-1/2}x\delta_A^{1/2},\qquad \forall x\in \msU. 
\end{equation}
This allows one to modify $S$ as to become $*$-preserving and involutive: we define the \emph{unitary antipode} as
\[
R: \msU \rightarrow \msU,\qquad x\mapsto \delta_A^{1/4}S(x)\delta_A^{-1/4}.
\]
Then for all $x\in \msU$ one has
\begin{equation}\label{EqUnitAntip}
R(x)^* = R(x^*),\qquad R^2(x) = x,\qquad S(x) = \delta_A^{-1/4}R(x)\delta_A^{1/4},\qquad S^{-1}(x) = \delta_A^{1/4}R(x)\delta_A^{-1/4}. 
\end{equation}
This allows to give a direct formula for the \emph{dual} $\Hsp_{\overline{\pi}}$ of an admissible unitary $\msU$-representation $\Hsp_{\pi}$ as being the usual dual Hilbert space $\Hsp_{\pi}^*$ with the $*$-representation
\begin{equation}\label{EqActDual}
\overline{\pi}(x)\xi^* = (R(x)^*\xi)^*.
\end{equation}
It follows that we can write 
\begin{equation}\label{EqFormStar}
\pi(\xi,\eta)^* = \pi((\delta_A^{1/4}\xi)^*,(\delta_A^{-1/4}\eta)^*) = \pi(\delta_A^{-1/4}\xi^*,\delta_A^{1/4}\eta^*). 
\end{equation}
To end, note that $\sigma_A$ and $S_A^2$ are part of one-parameter groups of automorphisms called respectively the \emph{modular} and \emph{scaling} automorphisms groups:
\[
(\sigma_A)_z(\pi(\xi,\eta)) = \pi(\delta_A^{i\overline{z}/2}\xi,\delta_A^{-iz/2}\eta),\qquad (\tau_A)_z(\pi(\xi,\eta)) = \pi(\delta_A^{i\overline{z}/2}\xi,\delta_A^{iz/2}\eta),
\]
so that $\sigma_A^n = (\sigma_A)_{-in}$ and $S_A^{2n} = (\tau_A)_{-in}$ for $n \in \Z$. We then also write $R_A: A \rightarrow A$ for involutive the anti-$*$-(co)algebra isomorphism
\[
R_A(\pi(\xi,\eta)) = \pi(\eta^*,\xi^*),\qquad \xi,\eta\in \Hsp,\qquad S_A = R_A\circ (\tau_A)_{-i/2} = (\tau_A)_{-i/2}\circ R_A.  
\]


\begin{Rem}\label{RemDualCQG}
In practice, for example in the setting of quantized enveloping algebras, the Hopf $*$-algebra $A$ is obtained through the unitary pairing with another Hopf $*$-algebra $U$, 
\[
\tau: A\otimes U \rightarrow \C.
\]
This pairing is called \emph{non-degenerate} if it induces inclusions $A \subseteq \Lin(U,\C)$ and $U \subseteq \Lin(A,\C)$. One then calls a $*$-representation of $U$ on a finite-dimensional Hilbert space \emph{admissible} if it is implemented by a unitary corepresentation of $A$ through $\tau$. The non-degeneracy condition provides a dense embedding $U \subseteq \msU$, compatible with the coproduct. 
\end{Rem}

\subsection{Coideals of CQG Hopf $*$-algebras and their stabilizer duals}

Let $B \subseteq A$ be a (unital) right coideal $*$-subalgebra, so $B$ is a unital $*$-subalgebra of $A$ with 
\[
\Delta(B) \subseteq B\otimes A.
\]
As mentioned in the introduction, for $A = \mcO(\G)$ we interpret $B = \mcO(\Hh\backslash \G)$. With $B_+ = B \cap \Ker(\varepsilon)$, we can then define
\[
C = \mcO(\Hh) = A/AB_+.
\]
By \cite[Proposition 1]{Tak79}, $C$ has a unique structure of a left $A$-module coalgebra such that the projection map $\pi_C: A \mapsto C$ is a left $A$-module coalgebra map. 

Write $1_C = \pi_C(1_A)$ for the canonical grouplike element in $C$. By \cite[Theorem 3.1]{Chi18} (see also, in light of Remark \ref{RemDualCQG}, the closely related \cite[Theorem 2.2]{MS99}), we have that $C$ is cosemisimple, and we can obtain $B$ back from $C$ via
\begin{equation}\label{EqBAreCoinv}
B = \{b\in A \mid \pi_C(b_{(1)})\otimes b_{(2)} = 1_C\otimes b\}.
\end{equation}

The quotient coalgebra $C$ moreover inherits an anti-linear involution via
\[
\pi_C(a)^{\dag} := \pi_C(S(a)^*),
\]  
using e.g. that $S(AB_+)^* = AB_+$ via an observation in \cite{Ko93} (see also \cite[Lemma 1.4]{MS99}). Writing $\msI = \Lin(C,\C)$, we obtain through $\pi_C$ an inclusion of unital $*$-algebras
\[
\msI \subseteq \msU,
\]
where $\msI$ is provided with the convolution algebra structure and the $*$-structure $\omega^*(c) = \overline{\omega(c^{\dag})}$. 

\begin{Def}
We call a  $*$-representation of $\msI$ on a finite-dimensional Hilbert space \emph{admissible} if it arises as a subrepresentation of an admissible $\msU$-representation. 
\end{Def}

We can again choose a maximal set of irreducible admissible $\msI$-representations, and write them $(\mcG_{\beta},\pi_{\beta})$ for $\beta$ in some indexing set $\Irr_{\adm}(\msI)$. We have 
\begin{equation}\label{EqIDirProd}
\msI = \Prod_{\beta} \End(\Gsp_{\beta}).
\end{equation}

The coproduct of $\msU$ restricts to a unital $*$-homomorphism 
\[
\Delta: \msI \rightarrow \msU \widehat{\otimes} \msI = \prod_{\alpha,\beta} \End(\Hsp_{\alpha})\otimes \End(\Gsp_{\beta}) \cong \Lin(A\otimes C,\C),
\]
dual to the left $A$-module structure on $C$. This allows to take, within the class of admissible $*$-representations of $\msI$, the tensor product of an admissible $\msU$-representation with an admissible $\msI$-representation. 

Let now $\Phi_C \in \msI$ be the self-adjoint support projection of the admissible $*$-representation 
\[
\varepsilon_{\mid \msI}: \msI \rightarrow \C,\qquad \omega \mapsto \omega(1_C),
\]
so interpreted as an element in $\msU$ we have that $(\Phi_C)_{\alpha}\in \End(\Hsp_{\alpha})$ is the orthogonal projection onto the $\msI$-fixed vectors of $\Hsp_{\alpha}$, i.e.\ the vectors $\xi$ satisfying $h\xi = \varepsilon(h)\xi$ for all $h\in \msI$. By abuse of notation, we interpret $\Phi_C$ both as a linear map on $A$ and on $C$,
\[
\Phi_C: A \rightarrow \C,\qquad \Phi_C: C\rightarrow \C. 
\]
We have the invariance property 
\begin{equation}\label{EqInvC}
(\Phi_C\otimes \id)\Delta_C(c) = \Phi_C(c)1_C = (\id\otimes \Phi_C)\Delta_C(c),\qquad \forall c\in C. 
\end{equation}

From \eqref{EqBAreCoinv} and \eqref{EqInvC}, we obtain the surjective right $B$-linear projection map 
\begin{equation}\label{EqCondExp}
E_B: A \rightarrow B,\quad a \mapsto (\Phi_C \otimes \id)\Delta(a), 
\end{equation}
so $\Phi_C = \varepsilon \circ E_B$. In particular, $B$ is generated as a vector space by the $\pi(\xi,\eta)$ with $\Phi_C\xi = \xi$. Using \eqref{EqFormStar} and the fact that $B$ is $*$-invariant, we find that 
\begin{multline}\label{EqSPhiC}
\Phi_C S_A(\Phi_C) = S_A(\Phi_C), \qquad \Phi_C S_A^{-1}(\Phi_C) = \Phi_C,\\ 
S_A^{-1}(\Phi_C)\Phi_C = S_A^{-1}(\Phi_C),\qquad S_A(\Phi_C)\Phi_C = \Phi_C,
\end{multline}
where the second identity follows from the first by applying $S_A^{-1}$, and where the third and fourth identities follow from the first two by applying the $*$-operation. In particular, we obtain also a map
\begin{equation}\label{EqCondExp2}
F_B: A \rightarrow B,\quad a \mapsto (S_A^{-1}(\Phi_C) \otimes \id)\Delta(a).
\end{equation}
A direct calculation using again \eqref{EqFormStar} shows that 
\begin{equation}\label{EqStarE}
F_B(a) = E_B(a^*)^*,\qquad a\in A,
\end{equation}
and in particular $F_B$ is left $B$-linear. 

Let us record the following observation (see also \cite[Proposition 1.6]{LVD07}).

\begin{Lem}\label{LemSSS}
Inside $\msU \widehat{\otimes}\msU$, the following identities hold: 
\begin{equation}\label{EqIdPhiCC}
\Delta(\Phi_C)(1\otimes \Phi_C) = \Phi_C\otimes \Phi_C= (S_A(\Phi_C)\otimes 1)\Delta(\Phi_C) = \Delta(\Phi_C)(S_A^{-1}(\Phi_C)\otimes 1).
\end{equation}
\end{Lem}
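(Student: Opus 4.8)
The plan is to establish the first equality $\Delta(\Phi_C)(1\otimes\Phi_C)=\Phi_C\otimes\Phi_C$ by a direct dual computation, to extract from it an auxiliary identity for $S_A(\Phi_C)$ by applying the antipode, and then to obtain the remaining two equalities by combining these with \eqref{EqSPhiC} and the $*$-operation. Throughout I use the identifications \eqref{EqLinDual}, \eqref{EqAlgComplProd} and the fact that $\Phi_C$ on $A$ is $\Phi_C\circ\pi_C$.

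\emph{First identity.} Since $\msU\widehat{\otimes}\msU\cong\Lin(A\otimes A,\C)$ carries the convolution product, this identity is equivalent to $\sum\Phi_C(ab_{(1)})\Phi_C(b_{(2)})=\Phi_C(a)\Phi_C(b)$ for all $a,b\in A$: pairing both sides with $a\otimes b$ and using $(1\otimes\Phi_C)(a_{(2)}\otimes b_{(2)})=\varepsilon(a_{(2)})\Phi_C(b_{(2)})$ together with $\Delta(\Phi_C)(x\otimes y)=\Phi_C(xy)$ produces the left hand side $\sum\Phi_C(ab_{(1)})\Phi_C(b_{(2)})$. To prove that, set $c=\pi_C(b)$; since $\pi_C$ is a left $A$-module coalgebra map one has $\pi_C(ab_{(1)})\otimes\pi_C(b_{(2)})=(a\cdot c_{(1)})\otimes c_{(2)}$, so the expression becomes $\Phi_C\!\big(a\cdot\sum c_{(1)}\Phi_C(c_{(2)})\big)=\Phi_C\big(a\cdot\Phi_C(c)1_C\big)$ by the invariance \eqref{EqInvC}, and this equals $\Phi_C(c)\Phi_C(\pi_C(a))=\Phi_C(a)\Phi_C(b)$ because $a\cdot 1_C=\pi_C(a)$. (Alternatively, in the representation picture $\Delta(\Phi_C)$ is the orthogonal projection in each $\Hsp_\alpha\otimes\Hsp_{\alpha'}$ onto the $\msI$-fixed vectors of the tensor product; since $\msI$ is a left coideal, $\Hsp_\alpha\otimes\Hsp_{\alpha'}^{\msI}$ is an $\msI$-submodule, $1\otimes\Phi_C$ is the orthogonal projection onto it, the two projections commute, and the intersection of their ranges is $\Hsp_\alpha^{\msI}\otimes\Hsp_{\alpha'}^{\msI}$.) Applying the $*$-operation, and using $\Phi_C^*=\Phi_C$ and that $\Delta$ is a $*$-homomorphism, yields simultaneously $(1\otimes\Phi_C)\Delta(\Phi_C)=\Phi_C\otimes\Phi_C$.

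\emph{Auxiliary identity, second and third identities.} Apply the algebra anti-automorphism $S\otimes S$ of $\msU\widehat{\otimes}\msU$ to the first identity and then the flip of the two tensor legs; using the Hopf relation $\Delta(S(x))=\sum S(x_{(2)})\otimes S(x_{(1)})$ (Sweedler notation for the coproduct of $\msU$) one arrives at the auxiliary identity $(S_A(\Phi_C)\otimes1)\Delta(S_A(\Phi_C))=S_A(\Phi_C)\otimes S_A(\Phi_C)$. Write $P=\Phi_C$ and $Q=S_A(\Phi_C)$, so that $QP=P$ and $PQ=Q$ by \eqref{EqSPhiC}. Now split $Q\otimes1=Q\otimes P+Q\otimes(1-P)$. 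The first summand gives $(Q\otimes P)\Delta(P)=(Q\otimes1)\big((1\otimes P)\Delta(P)\big)=(Q\otimes1)(P\otimes P)=QP\otimes P=P\otimes P$, using the adjoint of the first identity and $QP=P$. For the second summand, using $P=QP$ and the auxiliary identity one first rewrites $(Q\otimes1)\Delta(P)=(Q\otimes1)\Delta(Q)\Delta(P)=(Q\otimes Q)\Delta(P)$, hence $(Q\otimes(1-P))\Delta(P)=(1\otimes(1-P))(Q\otimes Q)\Delta(P)=(Q\otimes(1-P)Q)\Delta(P)=0$ because $(1-P)Q=Q-PQ=0$. Adding the two summands gives the second identity $(S_A(\Phi_C)\otimes1)\Delta(\Phi_C)=\Phi_C\otimes\Phi_C$. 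The third identity is its adjoint: since $\Delta$ is a $*$-homomorphism, $\Phi_C^*=\Phi_C$, and $S_A(\Phi_C)^*=S_A^{-1}(\Phi_C)$ by the property $S(x)^*=S^{-1}(x^*)$ of the dual antipode, adjoining the second identity yields $\Delta(\Phi_C)(S_A^{-1}(\Phi_C)\otimes1)=\Phi_C\otimes\Phi_C$.

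The one genuinely delicate point is the derivation of the second identity: done naively it is circular, and the way out is to invoke the auxiliary identity together with the relation $PQ=Q$ of \eqref{EqSPhiC} at exactly the right moment, namely to replace $(Q\otimes1)$ by $(Q\otimes Q)$ inside the $(1-P)$-term so that the projection $1-P$ can annihilate it. Everything else is routine bookkeeping with the identifications above and the factorisation of $\Phi_C$ through $\pi_C$.
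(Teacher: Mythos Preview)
Your proof is correct. For the first identity your dual computation and your parenthetical representation-theoretic argument both amount to the paper's one-line observation that $\Delta(\Phi_C)\in\msU\widehat{\otimes}\msI$ and $x\Phi_C=\varepsilon(x)\Phi_C$ for $x\in\msI$; your third identity, like the paper's, is the adjoint of the second.

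Where you genuinely differ is in the second identity. The paper proves it by a short direct computation in the module coalgebra $C$: starting from $b_{(1)}\otimes ab_{(2)}=S_A(a_{(1)})(a_{(2)}b)_{(1)}\otimes(a_{(2)}b)_{(2)}$, applying $\pi_C$ in the first leg and then the invariance \eqref{EqInvC} yields $\Phi_C(b_{(1)})\Phi_C(ab_{(2)})=\Phi_C(S_A(a_{(1)}))\Phi_C(a_{(2)}b)$, which together with another application of invariance gives $(S_A(\Phi_C)\otimes1)\Delta(\Phi_C)=\Phi_C\otimes\Phi_C$ immediately. Your route instead passes through the auxiliary identity $(Q\otimes1)\Delta(Q)=Q\otimes Q$ obtained by applying $S\otimes S$ and the flip, and then reaches the second identity via the projection splitting $Q\otimes1=Q\otimes P+Q\otimes(1-P)$ combined with the relations $QP=P$, $PQ=Q$ from \eqref{EqSPhiC}. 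This is a nice operator-algebraic maneuver, and your remark that the key step is replacing $Q\otimes1$ by $Q\otimes Q$ inside the $(1-P)$-term is exactly right. The paper's argument is shorter and stays closer to the Hopf-algebraic structure; yours is more indirect but has the virtue of deriving everything formally from the first identity and \eqref{EqSPhiC}, without a second appeal to the module-coalgebra structure of $C$.
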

\begin{proof}
The identity $\Delta(\Phi_C)(1\otimes \Phi_C) = \Phi_C\otimes \Phi_C$ is immediate upon realizing that $\Delta(\Phi_C) \in \msU \widehat{\otimes} \msI$ by the left coideal property of $\msI$, and $\Phi_C x = x\Phi_C = \varepsilon(x)\Phi_C$ for $x\in \msI$ by construction of $\Phi_C$. 

On the other hand, for $a,b\in A$ we have 
\begin{equation}\label{EqInvCom}
\pi_C(b_{(1)})\Phi_C(ab_{(2)}) = S_A(a_{(1)})\pi_C((a_{(2)}b)_{(1)})\Phi_C((a_{(2)}b)_{(2)}) = \pi_C(S_A(a_{(1)}))\Phi_C(a_{(2)}b). 
\end{equation}
Applying $\Phi_C$, we see that 
\[
\Phi_C(b)\Phi_C(a) = \Phi_C(b_{(1)})\Phi_C(a\pi_C(b_{(2)})) = \Phi_C(b_{(1)})\Phi_C(ab_{(2)}) = \Phi_C(S_A(a_{(1)}))\Phi_C(a_{(2)}b).
\]

As this holds for all $a,b\in A$, we find $(S_A(\Phi_C)\otimes 1)\Delta(\Phi_C) = \Phi_C\otimes \Phi_C$. Applying the $*$-operation, we obtain $\Delta(\Phi_C)(S_A^{-1}(\Phi_C)\otimes 1) = \Phi_C\otimes \Phi_C$.
\end{proof}

Recall now from \cite[Proposition 2.10]{BDRV06} that $B$ allows a (unique) modular automorphism
\[
\sigma_B: B \rightarrow B,\qquad \Phi_A(bc) = \Phi_A(c\sigma_B(b)),\qquad b,c\in B. 
\]
Note that $\sigma_B$ is in general different from $(\sigma_A)_{\mid B}$. Indeed, the equality $\sigma_B = (\sigma_A)_{\mid B}$ only holds if $B$ is invariant under $\sigma_A$, which is not always the case. 


\begin{Lem}\label{LemModAutBInv}
The modular automorphism $\sigma_B$ of $B$ is given by $\sigma_B(b) = F_B(\sigma_A(b))$ for $b\in B$, so
\begin{equation}\label{EqModAutB}
\sigma_B(\pi(\xi,\eta)) = \pi(\delta_A^{-1/4}R(\Phi_C)\delta_A^{-1/4}\xi,\delta_A^{-1/2}\eta),\qquad \xi \in \Phi_C\Hsp_{\pi},\eta\in \Hsp_{\pi}.
\end{equation}
Its inverse $\sigma_B^{-1}$ is given by  $\sigma_B^{-1}(b) = E_B(\sigma_A^{-1}(b))$ for $b\in B$, so
\begin{equation}\label{EqModAutBInv}
\sigma_B^{-1}(\pi(\xi,\eta)) = \pi(\Phi_C \delta_A^{1/2}\xi,\delta_A^{1/2}\eta),\qquad \xi \in \Phi_C\Hsp_{\pi},\eta\in \Hsp_{\pi}. 
\end{equation}
\end{Lem}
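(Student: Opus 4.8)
\emph{Setup and strategy.} Recall that $\sigma_B$ is characterized among linear endomorphisms of $B$ by the relation $\Phi_A(bc)=\Phi_A(c\,\sigma_B(b))$ for $b,c\in B$, and that this determines $\sigma_B(b)$ uniquely: if $b'\in B$ satisfied $\Phi_A(cb)=\Phi_A(cb')$ for all $c\in B$, then taking $c=(b-b')^*$ and using faithfulness of $\Phi_A$ (a consequence of \eqref{EqPW1}--\eqref{EqPW2}) gives $b=b'$. So the plan has two parts: first identify $\sigma_B(b)$ as $F_B(\sigma_A(b))$ and $\sigma_B^{-1}(b)$ as $E_B(\sigma_A^{-1}(b))$ by checking the characterizing relation, then evaluate $E_B$ and $F_B$ on matrix coefficients to obtain \eqref{EqModAutB}--\eqref{EqModAutBInv}. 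First I would record that $E_B$ and $F_B$ preserve $\Phi_A$: indeed $\Phi_A(E_B(a))=(\Phi_C\otimes\Phi_A)\Delta(a)=\Phi_C(\Phi_A(a)1)=\Phi_A(a)\,\Phi_C(1)$ by invariance of $\Phi_A$, while $\Phi_C(1)=\varepsilon(E_B(1))=\varepsilon(1)=1$; and $\Phi_A\circ F_B=\Phi_A$ then follows from \eqref{EqStarE} together with $\Phi_A(x^*)=\overline{\Phi_A(x)}$. I also use that $F_B$ is left $B$-linear and $E_B$ right $B$-linear, both already noted around \eqref{EqCondExp} and \eqref{EqStarE}.

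\emph{Identifying $\sigma_B$ and its inverse.} Since $F_B$ maps $A$ into $B$, it suffices to compute, for $b,c\in B$, $\Phi_A(c\,F_B(\sigma_A(b)))=\Phi_A(F_B(c\,\sigma_A(b)))=\Phi_A(c\,\sigma_A(b))=\Phi_A(bc)$, where the three equalities use left $B$-linearity of $F_B$, then $\Phi_A\circ F_B=\Phi_A$, then the modular relation of $\sigma_A$ on $A$. By uniqueness this gives $\sigma_B(b)=F_B(\sigma_A(b))$. Symmetrically, $\sigma_B^{-1}(b)$ is the unique element of $B$ with $\Phi_A(\sigma_B^{-1}(b)\,c)=\Phi_A(cb)$ for all $c\in B$, and $\Phi_A(E_B(\sigma_A^{-1}(b))\,c)=\Phi_A(E_B(\sigma_A^{-1}(b)\,c))=\Phi_A(\sigma_A^{-1}(b)\,c)=\Phi_A(cb)$ using right $B$-linearity of $E_B$, $\Phi_A\circ E_B=\Phi_A$, and the modular relation; hence $\sigma_B^{-1}(b)=E_B(\sigma_A^{-1}(b))$.

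\emph{The explicit formulas.} It remains to compute $E_B$ and $F_B$ on a matrix coefficient. Expanding $\Delta(\pi(\zeta,\eta))=\sum_i\pi(\zeta,e_i)\otimes\pi(e_i,\eta)$ for an orthonormal basis $(e_i)$ of $\Hsp_{\pi}$, using $\tau(\pi(\zeta,e_i),x)=\langle\zeta,\pi(x)e_i\rangle$ and the collapsing identity $\sum_i\langle v,e_i\rangle\pi(e_i,\eta)=\pi(v,\eta)$, the definitions \eqref{EqCondExp} and \eqref{EqCondExp2} yield $E_B(\pi(\zeta,\eta))=\pi(\Phi_C\zeta,\eta)$ (using $\pi(\Phi_C)^*=\pi(\Phi_C)$), and, writing $S_A^{-1}(\Phi_C)=\delta_A^{1/4}R(\Phi_C)\delta_A^{-1/4}$ from \eqref{EqUnitAntip} and noting that $\pi(\delta_A^{1/4})$ and $\pi(R(\Phi_C))$ are self-adjoint (the latter since $\Phi_C^*=\Phi_C$, hence $R(\Phi_C)^*=R(\Phi_C^*)=R(\Phi_C)$), $F_B(\pi(\zeta,\eta))=\pi(\delta_A^{-1/4}R(\Phi_C)\delta_A^{1/4}\zeta,\eta)$. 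Substituting $\sigma_A(\pi(\xi,\eta))=\pi(\delta_A^{-1/2}\xi,\delta_A^{-1/2}\eta)$ and $\sigma_A^{-1}(\pi(\xi,\eta))=\pi(\delta_A^{1/2}\xi,\delta_A^{1/2}\eta)$ from \eqref{EqModAntSq}, and simplifying $\delta_A^{1/4}\delta_A^{-1/2}=\delta_A^{-1/4}$, gives \eqref{EqModAutB} and \eqref{EqModAutBInv} on the generators $\pi(\xi,\eta)$ of $B$ with $\xi\in\Phi_C\Hsp_{\pi}$.

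\emph{Expected difficulty.} I do not foresee a real obstacle: the argument is a chain of short reductions resting on the already-established properties of $E_B$, $F_B$, $\Phi_A$ and the antipode. The delicate points are the bookkeeping of powers of $\delta_A$ and of operator adjoints in the last step (recalling that in $\pi(\delta_A^{z}\xi,\dots)$ the symbol $\delta_A^{z}\xi$ denotes the action $\pi(\delta_A^{z})\xi$), together with the small computation $\Phi_C(1)=1$ behind $\Phi_A\circ E_B=\Phi_A$. As a consistency check one may verify that the displayed $\sigma_B$ lands in $B$ and that $\sigma_B\circ\sigma_B^{-1}=\id$ on the generators; both follow, after rewriting $R(\Phi_C)=\delta_A^{1/4}S_A(\Phi_C)\delta_A^{-1/4}$, from the intertwining relations \eqref{EqSPhiC} between $\Phi_C$ and $S_A^{\pm1}(\Phi_C)$.
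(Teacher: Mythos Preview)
Your proof is correct and follows essentially the same approach as the paper: both exploit the characterizing modular relation together with the $B$-linearity of $E_B$/$F_B$ and the fact that these maps preserve $\Phi_A$ (equivalently, $\Phi_C(1)=1$). The only cosmetic difference is that the paper establishes $\sigma_B^{-1}=E_B\circ\sigma_A^{-1}$ first and then deduces the formula for $\sigma_B$ via $\sigma_B(b)=\sigma_B^{-1}(b^*)^*$ together with \eqref{EqStarE}, whereas you verify both identities directly and in parallel.
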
 

\begin{proof}
Since $\Phi_A$ is invariant under $\sigma_A$, since $\Phi_C(1_C) =1$ and since $E_B$ is right $B$-linear,  we have for $b,c\in B$ that 
\[
\Phi_A(bc) = \Phi_A(\sigma_A^{-1}(c)b) = \Phi_A(E_B(\sigma_A^{-1}(c)b)) = \Phi_A(E_B(\sigma_A^{-1}(c))b). 
\]
Since $(\Phi_A)_{\mid B}$ is $\sigma_B$-invariant, we deduce that $\sigma_B^{-1} = E_B \circ \sigma_A^{-1}$. By \eqref{EqModAntSq} and \eqref{EqCondExp}, we obtain \eqref{EqModAutBInv}.  

The formula for $\sigma_B$ now follows by using that $\sigma_B(b) = \sigma_B^{-1}(b^*)^*$ for $b\in B$, and invoking \eqref{EqStarE}. Formula \eqref{EqModAutB} follows again from \eqref{EqModAntSq} combined with \eqref{EqUnitAntip}.
\end{proof}

Note that we can make sense of 
\[
\delta_B^{1/2} := \Phi_C \delta_A^{1/2}\Phi_C
\]
as a strictly positive element in $\Phi_C\msU\Phi_C$, in the sense that this element is strictly positive in each $\End(\Phi_C \Hsp_{\pi})$. It follows that we can define $\delta_B^{z} = (\delta_B^{1/2})^{2z} \in \Phi_C\msU\Phi_C$ for $z\in \C$, and then the analytic continuation of $\sigma_B$ into a complex one-parametergroup is given by 
\[
(\sigma_B)_z(\pi(\xi,\eta)) = \pi(\delta_B^{i\overline{z}/2}\xi,\delta_A^{-iz/2}\eta),\qquad \sigma_B = (\sigma_B)_{-i}. 
\]
 In fact, using \eqref{EqUnitAntip} and \eqref{EqSPhiC} we obtain directly that 
\[
\delta_B^{-1/2}  = \delta_A^{-1/4}R(\Phi_C)\delta_A^{-1/4} \in \Phi_C \msU\Phi_C.
\]
Completing the analogy with previous notations, we then also write 
\[
(\tau_B)_z: B \rightarrow B,\qquad (\tau_{B})_z(\pi(\xi,\eta)) = \pi(\delta_B^{i\overline{z}/2}\xi,\delta_A^{iz/2}\eta),\qquad \xi\in \Phi\Hsp_{\alpha},\eta\in \Hsp_{\alpha}. 
\]
Noting that $R_A(B)$ defines a \emph{left} coideal $*$-subalgebra, we obtain anti-algebra homomorphisms
\[
S_B: B \rightarrow R_A(B),\quad b \mapsto R_A((\tau_B)_{-i/2}(b)),\qquad S_B: R_A(B) \rightarrow B,\quad R_A(b) \mapsto (\tau_B)_{-i/2}(b).
\]
Then we can write 
\begin{equation}\label{EqAntB}
S_B^2 = (\tau_B)_{-i}: B \rightarrow B.
\end{equation}

We now consider the finite support part of $\msI$. 

\begin{Def}
We put
\[
\mcI = \{y\in \msI \mid y_{\beta} =0 \textrm{ for all but finitely many }\beta\} =  \oplus_{\beta} \End(\Gsp_{\beta}) \subseteq \msI.
\]
\end{Def}

We may view $\mcI \subseteq \msU$, though in general $\mcI$ is not included in $\mcU$.

\begin{Lem}\label{LemEqPhiC}
The following equality holds:
\[
\mcI =\{\Phi_C(a-):  A \rightarrow \C\mid a\in A\} \subseteq \msI.
\]
\end{Lem}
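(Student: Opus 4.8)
The plan is to prove the two inclusions separately, using the description of $\mcU$ as $\oplus_\alpha \End(\Hsp_\alpha)$ from \eqref{EqIsoFunct} together with the realization of $\mcI$ as $\oplus_\beta \End(\Gsp_\beta)$.

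\textbf{Step 1: the inclusion $\{\Phi_C(a-) : a\in A\} \subseteq \mcI$.} First I would observe that since $\Phi_C$, viewed in $\msU$, factors through the quotient map $\pi_C : A \to C$ — that is, $\Phi_C = \Phi_C\circ\pi_C$ as functionals on $A$ — the functional $\Phi_C(a-)$ depends only on how $a-$ maps into $C$. More precisely, I would show $\Phi_C(a-) = \Phi_C((\pi_C(a_{(1)}))\,\triangleright\text{-something})$; the cleaner route is to use the left $A$-module coalgebra structure on $C$. The key point is the invariance \eqref{EqInvC} and the fact that $\Phi_C$ is the support projection of $\varepsilon_{\mid\msI}$, which forces $\Phi_C$, as an element of $\msU = \prod_\alpha \End(\Hsp_\alpha)$, to lie in $\msI$. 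Since $\msI$ is a left coideal, $\Delta(\Phi_C) \in \msU\widehat\otimes\msI$, and hence for $x \in \msU$ the element $(\id\otimes\tau(-,x))\Delta(\Phi_C)$-type expressions land in $\msI$. Concretely, $\Phi_C(a-)$ as a functional is $(\tau(a,-)\otimes\id)$ applied to $\Delta(\Phi_C)$ — i.e.\ it equals $a\triangleright\Phi_C$ in the module notation $\tau(b, a\triangleright x) = \tau(ba,x)$ — and since $\Delta(\Phi_C)\in\msU\widehat\otimes\msI$ this lies in $\msI$. To get that it lies in the \emph{finite} part $\mcI = \oplus_\beta\End(\Gsp_\beta)$, I would use that $a$ lies in a finite sum of matrix coefficient coalgebras of $A$, so $a\triangleright\Phi_C$ is supported on finitely many $\alpha$'s inside $\msU$, hence on finitely many $\beta$'s inside $\msI$.

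\textbf{Step 2: the reverse inclusion $\mcI \subseteq \{\Phi_C(a-) : a\in A\}$.} Here I would argue that every $y\in\mcI$ is a finite sum of elements supported in a single $\End(\Gsp_\beta)$, so by linearity it suffices to realize a basis of each $\End(\Gsp_\beta)$ in this form. Since $\Gsp_\beta$ embeds as a subrepresentation of some admissible $\Hsp_\alpha$, and $\Phi_C$ as element of $\msU$ is the projection onto the $\msI$-fixed vectors, I would take matrix coefficients: for $\xi\in\Phi_C\Hsp_\alpha$ and $\eta\in\Hsp_\alpha$, consider $a = \pi_\alpha(\xi,\eta)\in A$; then $\Phi_C(a-)$ evaluated on $x\in\msU$ is $\Phi_C(\pi_\alpha(\xi,\eta)x) = \tau(\pi_\alpha(\xi,\eta)x, \Phi_C)$. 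Unwinding via $\Delta$ and using $\Phi_C\in\msI$ together with $\Phi_C\xi = \xi$, this should compute to something like $\langle\xi,\pi(x)\,(\text{projection of }\eta)\rangle$ up to the module structure — i.e.\ a matrix coefficient of the $\msI$-representation on $\Phi_C\Hsp_\alpha$. Running over all $\alpha$ containing $\Gsp_\beta$ and all such $\xi,\eta$ should sweep out all of $\End(\Gsp_\beta)$, using faithfulness/surjectivity already built into the Peter–Weyl machinery.

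\textbf{Main obstacle.} The delicate point is Step 2: pinning down precisely which $a\in A$ produce which elements of $\mcI$, and checking that one genuinely gets \emph{all} of $\oplus_\beta\End(\Gsp_\beta)$ rather than some proper subspace — this requires a surjectivity argument, presumably a dimension count or a separation argument showing that if $y\in\mcI$ satisfies $\Phi_C(ay) = 0$ for all... wait, rather: showing the span of $\{\Phi_C(a-)\}$ separates points of $C$ appropriately, or equivalently that the pairing $C\times\mcI\to\C$ restricted suitably is non-degenerate. I expect the quickest rigorous route is the functional-analytic one mirroring \eqref{EqIsoFunct}: show that $a\mapsto\Phi_C(a-)$ is, modulo the kernel $AB_+$ of $\pi_C$, exactly the injection $C = A/AB_+ \hookrightarrow \Lin(C,\C)$ induced by the faithful functional $\Phi_C$ on the cosemisimple coalgebra $C$, whose image on the finite part is precisely $\mcI = \oplus_\beta\End(\Gsp_\beta)$. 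Faithfulness of $\Phi_C$ on $C$ follows from cosemisimplicity of $C$ (established via \cite[Theorem 3.1]{Chi18}) exactly as faithfulness of $\Phi_A$ on $A$ followed from the Peter–Weyl relations \eqref{EqPW1}–\eqref{EqPW2}.
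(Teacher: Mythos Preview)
Your Step 1 contains a genuine error. You claim that $\Phi_C(a-)$, viewed in $\msU = \prod_\alpha B(\Hsp_\alpha)$, is supported on only finitely many $\alpha$'s, and then deduce finite $\beta$-support in $\msI$. The first assertion is false: take $a=1$, so $\Phi_C(a-) = \Phi_C$ itself. As an element of $\msU$, the component $\pi_\alpha(\Phi_C)$ is the orthogonal projection onto the $\msI$-fixed vectors in $\Hsp_\alpha$, which is typically nonzero for infinitely many $\alpha$ (for the Podle\'{s} spheres it is nonzero for \emph{every} $\alpha$). So $\Phi_C \notin \mcU$, even though $\Phi_C \in \mcI$; indeed the paper explicitly warns that $\mcI \not\subseteq \mcU$ in general. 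The passage from ``$a$ has finite support in $A$'' to ``$\Phi_C(a-)$ has finite support in $\msU$'' simply does not hold.

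Your proposed ``quickest rigorous route'' at the end is also built on a false premise. You assert that $a \mapsto \Phi_C(a-)$ factors through $C = A/AB_+$, but this is not the correct kernel: Proposition~\ref{PropCharAnn} (proved just after this lemma) shows that
\[
\Ann_r(\Phi_C) = \{a \in A : \Phi_C(a-) = 0\} = \sigma_A^{-1}(\sigma_B(B_+))A,
\]
which differs from $AB_+$ whenever $B$ fails to be $\sigma_A$-invariant --- exactly the situation of interest. So there is no canonical identification of the image with the cosemisimple coalgebra $C$, and the analogy with \eqref{EqIsoFunct} breaks down. (Note also that ``faithfulness of $\Phi_C$ on $C$'' does not make sense as stated, since $C$ carries no multiplication.)

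What actually drives both inclusions in the paper is Frobenius reciprocity for $\msI$-representations: one shows that for an admissible $\msU$-representation $\pi$ and an irreducible admissible $\msI$-representation $\pi_\beta$, the space $\Hom_{\msI}(\varepsilon_{\mid \msI}, \pi \otimes \pi_\beta)$ is nonzero iff $\pi_\beta$ occurs in $\overline{\pi}_{\mid \msI}$. This is the missing ingredient. It gives Step~1 because for $a$ a matrix coefficient of $\pi_\alpha$, the $\beta$-component of $\Phi_C(a-)$ can be nonzero only when $\pi_\beta \subseteq \overline{\pi_\alpha}_{\mid \msI}$ --- a condition met by only finitely many $\beta$. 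It gives Step~2 because the image $\{\Phi_C(a-)\}$ is a two-sided ideal in $\mcI$ (via invariance of $\Phi_C$), so a hypothetical missing block $\End(\Gsp_\beta)$ would force $\Hom_{\msI}(\pi_\beta,\pi_{\mid \msI}) = 0$ for \emph{all} admissible $\pi$, contradicting admissibility of $\pi_\beta$.
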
 
\begin{proof}
Consider the intertwining map
\begin{equation}\label{EqIntDual}
t_{\pi}: \C_{\varepsilon}\rightarrow \Hsp_{\overline{\pi}}\otimes \Hsp_{\pi},\qquad 1 \mapsto \sum_i e_i^*\otimes \delta_A^{1/4}e_i = \sum_i  \delta_A^{-1/4}e_i^*\otimes e_i.
\end{equation}
We obtain from it the Frobenius reciprocity isomorphism
\[
\Hom_{\msI}(\pi_1,\pi\otimes \pi_2) \cong \Hom_{\msI}(\pi_2,\overline{\pi}\otimes \pi_1),\qquad T \mapsto (1\otimes T^*)(t_\pi\otimes 1),
\]
for $\pi_1,\pi_2$ admissible $\msI$-representations and $\pi$ an admissible $\msU$-representation. Applying this with $\pi_1 = \varepsilon_{\mid \msI}$, we see that for $\pi$ an admissible $\msU$-representation and $\pi'$ an admissible $\msI$-representation one has $\Hom_{\msI}(\varepsilon_{\mid \msI},\pi\otimes \pi') \neq \{0\}$ if and only if $\Hom_{\msI}(\pi',\overline{\pi}\otimes \varepsilon_{\mid\msI}) = \Hom_{\msI}(\pi',\overline{\pi}_{\mid\msI}) \neq \{0\}$. As $\overline{\pi}_{\mid\msI}$ can only contain finitely many $\pi'$ as subrepresentations, the inclusion 
\[
\mcI \supseteq \{\Phi_C(a-):  A \rightarrow \C\mid a\in A\}
\] 
follows by considering expressions $\Phi_C(ac)$ for $a,c$ matrix coefficients. 

Assume now that we would have the strict inclusion $\supsetneq$. Using invariance of $\Phi_C$, it is not hard to see that the $\Phi_C(a-)$ form a two-sided ideal in $\mcI$, so as $\mcI$ is a direct sum of matrix algebras, our assumption would imply that there exists an irreducible admissible $\msI$-representation $\pi_{\beta}$ such that $\Hom_{\msI}(\C_{\varepsilon_{\mid \msI}},\pi\otimes \pi_{\beta})=0$ for all admissible $\msU$-representations $\pi$. By Frobenius reciprocity, this says that $ \Hom_{\msI}(\pi_{\beta},\pi)=0$ for all admissible $\msU$-representations $\pi$. Clearly we obtain a contradiction by choosing $\pi$ with $\pi_{\beta} \subseteq \pi_{\mid \msI}$. 
\end{proof} 

It follows in particular that $\mcI$ is a right $A$-module by 
\[
\tau(c,x\lhd a) = \tau(ac,x),\qquad a\in A,c\in C,x\in \mcI.
\]

\begin{Rem}
In the context of Remark \ref{RemDualCQG}, assume that $I$ is a left coideal $*$-subalgebra of $U$. Then we can create from it the right coideal $*$-subalgebra 
\[
B = I^{\perp} = \{a \in A \mid \forall h\in I: \tau(a_{(1)},h)a_{(2)} = \varepsilon(h)a\}
\]
(see e.g.~ \cite[Lemma 1.2]{DCDz21}). This induces a dense embedding $I \subseteq \msI$. Again, this is a natural way in which concrete examples of coideals can be constructed.
\end{Rem}

\subsection{Relatively invariant functionals}\label{SubSecInvFunct}

We keep the notation of the previous subsection. The following notion was considered in a slightly different context in \cite{Kas18}. 

\begin{Def}
Let $g\in \msU$, so $g: A \rightarrow \C$. We say that a functional $\psi_{\mcI}:\mcI\rightarrow \C$ is \emph{$g$-invariant} if 
\[
\psi_{\mcI}(x\lhd a) = \tau(a,g)\psi_{\mcI}(x),\qquad \forall x\in \mcI,a\in A.
\]
We say that $\psi_{\mcI}$ is a (normalized) $g$-invariant integral if $\psi_{\mcI}$ is $g$-invariant and 
\[
\psi_{\mcI}(\Phi_C) = 1,\qquad \psi_{\mcI}(x^*x) \geq 0,\qquad x\in \mcI. 
\]
We say that $\psi_{\mcI}$ is \emph{relatively invariant} if it is $g$-invariant for some $g$, and \emph{invariant} if we can take $g=1$.
\end{Def}
\begin{Rem}
In \cite[Definition 1.9]{NT04} is introduced the notion of \emph{quasi-}invariant state on a C$^*$-algebra with a coaction by $\hat{A}$. Although the context is slightly different, one sees that as far as the algebraic conditions are concerned, our notion will indeed be a special case of theirs when suitably interpreted, with the Radon-Nikodym cocycle of \cite[Definition 1.9]{NT04} of the special form $1\otimes g^{-1}$. See also the upcoming Lemma \ref{LemCharg}.
\end{Rem} 

Now an invariant integral $\psi_{\mcI}$ exists when $\mcI$ is invariant under $S_A^2$, as one sees by combining \cite[Theorem 3.13]{Tom07} with \cite[Theorem 4.2]{KK17} (see also \cite[Section 4]{Chi18}). However, this condition is not satisfied in all the examples one is interested in. We show now that under an extra condition on $B$, there exists at least a non-zero \emph{relatively} invariant functional on $\mcI$. Our methods will also provide a more direct algebraic proof of the existence of invariant integrals when $\mcI$ is $S_A^2$-invariant. 

We first observe the following. 

\begin{Lem}\label{LemCharg}
Assume $\psi_{\mcI}$ is a non-zero $g$-invariant functional on $\mcI$. Then $g: A \rightarrow \C$ is a character (i.e. a unital homomorphism). Moreover, if $\psi_{\mcI}$ is an invariant integral, then $g$ is positive and invertible as an element in $\msU = \prod_{\alpha}B(\Hsp_{\alpha})$. 
\end{Lem}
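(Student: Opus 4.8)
The plan is to exploit the $A$-module structure on $\mcI$ and the coproduct. First I would prove that $g$ is a homomorphism. For $a, b \in A$ and $x \in \mcI$, note that $x \lhd (ab)$ should relate to $(x \lhd a) \lhd b$ via the module axiom, giving on one hand $\psi_{\mcI}((x\lhd a)\lhd b) = \tau(b,g)\psi_{\mcI}(x\lhd a) = \tau(b,g)\tau(a,g)\psi_{\mcI}(x)$, and on the other $\psi_{\mcI}(x\lhd(ab)) = \tau(ab,g)\psi_{\mcI}(x)$. Since $\psi_{\mcI}\neq 0$ and the $\lhd$-action is by right multiplication in $A$ before pairing — more precisely, using Lemma~\ref{LemEqPhiC} we can write a general element of $\mcI$ as $\Phi_C(c-)$ and check that $\psi_{\mcI}$ being nonzero on $\mcI$ forces the identity $\tau(ab,g) = \tau(a,g)\tau(b,g)$ for \emph{all} $a,b \in A$. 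Concretely, pick $x$ with $\psi_{\mcI}(x)\neq 0$; then for every $a,b$ the two computations above must agree, yielding $\tau(ab,g) = \tau(a,g)\tau(b,g)$, i.e.\ $g$ is multiplicative. For unitality, take $a = 1$: then $x \lhd 1 = x$, so $\psi_{\mcI}(x) = \tau(1,g)\psi_{\mcI}(x)$, forcing $\tau(1,g) = 1$, i.e.\ $g(1_A) = 1$. So $g$ is a character on $A$.

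Next I would address positivity and invertibility of $g$ under the extra hypothesis that $\psi_{\mcI}$ is an invariant integral, i.e.\ $\psi_{\mcI}(x^*x) \geq 0$ and $\psi_{\mcI}(\Phi_C) = 1$. Here the idea is to relate the $*$-structure on $\mcI$ to the $\lhd$-action. One should compute $(x \lhd a)^*$ in terms of $x^*$ and some twisted action of $a$: using the formula $\omega^*(c) = \overline{\omega(c^{\dag})}$ with $c^{\dag} = \pi_C(S(c)^*)$ and the definition $\tau(c, x\lhd a) = \tau(ac, x)$, one derives $(x \lhd a)^* = x^* \lhd (S_A(a)^* \text{-type expression})$, i.e.\ the adjoint intertwines $\lhd a$ with $\lhd$ applied to $S_A^{-1}(a^*)$ or similar (the precise twist will come out of the calculation, matching \eqref{EqFormStar} and the $\dag$-involution). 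Then consider, for a matrix coefficient $a = \pi(\xi,\eta)$ with appropriate vectors, the element $y = \Phi_C(a-) \in \mcI$ (legitimate by Lemma~\ref{LemEqPhiC}); compute $\psi_{\mcI}(y^* y)$ and also relate it via $g$-invariance to a value of $g$ on a positive element $a^* a$. Positivity $\psi_{\mcI}(y^*y)\geq 0$ for all such $y$, combined with the fact that positive elements $a^*a$ of $A$ together with the invariance identity span enough, should force $\tau(a^*a, g) \geq 0$ for all $a$, i.e.\ $g \geq 0$ as a functional, which under the identification $\msU = \prod_\alpha B(\Hsp_\alpha)$ means each $\pi_\alpha(g)$ is a positive operator.

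For invertibility: since $g$ is a character and $g \geq 0$, I would show each $\pi_\alpha(g)$ is invertible. One route: a character $g$ on the Hopf $*$-algebra $A$ automatically has a convolution inverse $g \circ S_A$ (because $g(a_{(1)})g(S_A(a_{(2)})) = g(a_{(1)}S_A(a_{(2)})) = g(\varepsilon(a)1) = \varepsilon(a)$), so $g$ is an invertible element of the convolution algebra $\msU$; under the product decomposition this means every $\pi_\alpha(g)$ is invertible. Combined with positivity, each $\pi_\alpha(g)$ is then strictly positive. The normalization $\psi_{\mcI}(\Phi_C) = 1$ is not strictly needed for these structural conclusions but guarantees $\psi_{\mcI}\neq 0$, so it feeds into the first part.

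The main obstacle I anticipate is pinning down the exact twisted formula for $(x \lhd a)^*$ and for how $\lhd$ interacts with the coproduct and the $\dag$-involution on $C$ — getting the $\delta_A$-powers and $S_A$ vs.\ $S_A^{-1}$ right — and then verifying that the positive elements one can reach through $\psi_{\mcI}(y^*y)$ and the $g$-invariance identity really do detect positivity of $g$ on \emph{all} of $A$ (rather than just on $\Phi_C$-compressed pieces). One likely needs the faithfulness of $\Phi_A$ and the density/spanning statement that $B$ is spanned by matrix coefficients $\pi(\xi,\eta)$ with $\Phi_C\xi = \xi$, together with the fact that $\{\Phi_C(a-) : a\in A\}$ exhausts $\mcI$ (Lemma~\ref{LemEqPhiC}), to bootstrap from compressed positivity to genuine positivity of $\pi_\alpha(g)$ on the full $\Hsp_\alpha$. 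This is where the argument will require care rather than being routine.
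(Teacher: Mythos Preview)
Your argument that $g$ is a character, and your invertibility argument via the convolution inverse $g\circ S_A$, are correct and coincide with the paper's. The positivity step, however, has two genuine gaps.

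First, the proposed link between $\psi_{\mcI}(y^*y)$ and $\tau(a^*a,g)$ does not materialize. With $y=\Phi_C\lhd a$ one indeed has $(\Phi_C\lhd a)^*=\Phi_C\lhd S_A(a)^*$, but the convolution product $(\Phi_C\lhd S_A(a)^*)(\Phi_C\lhd a)$ is \emph{not} of the form $\Phi_C\lhd c$ for any single $c\in A$, so $g$-invariance cannot be applied to it to produce a value $\tau(c,g)$. Second, and more fundamentally, your target inequality $\tau(a^*a,g)\geq 0$ is the statement that $g$ is a positive \emph{functional} on $A$; the lemma asks for $g$ to be a positive \emph{element} of $\msU=\prod_\alpha B(\Hsp_\alpha)$, i.e.\ $\langle\xi,\pi_\alpha(g)\xi\rangle=\tau(\pi_\alpha(\xi,\xi),g)\geq 0$ for all $\alpha,\xi$. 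These notions do not coincide: the diagonal matrix coefficients $\pi_\alpha(\xi,\xi)$ are typically not of the form $a^*a$ in $A$, and conversely a positive element of $\msU$ such as $\delta_A$ need not be a positive functional on $A$ (already for $\mcO_q(SU(2))$ one checks $\delta_A((\alpha+\lambda\delta)^*(\alpha+\lambda\delta))$ can be negative). So even if the first gap were closed, the conclusion would not follow.

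The paper bypasses both problems by testing $g$ against the correct cone. One writes $\Phi_C\lhd a=(\tau(a,-)\otimes\id)\Delta(\Phi_C)$; since $\Delta(\Phi_C)$ is a projection, this is a positive element of $\mcI\subseteq\msU$ whenever $\tau(a,-)$ is a positive functional on $\msU$ (e.g.\ $a=\pi_\alpha(\xi,\xi)$). Positivity of $\psi_{\mcI}$ together with the normalization $\psi_{\mcI}(\Phi_C)=1$ then gives $\tau(a,g)=\psi_{\mcI}(\Phi_C\lhd a)\geq 0$ for all such $a$, which is precisely positivity of $g$ in $\msU$.
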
 
\begin{proof}
Applying the $g$-invariance condition to $\psi_{\mcI}(x\lhd (ab)) = \psi_{\mcI}((x\lhd a)\lhd b)$ for $\psi_{\mcI}(x)$ non-zero, we immediately see that $g$ is a character (clearly $g$ can not be the zero element). In particular, $g$ is invertible with inverse $S(g)$. If $\psi_{\mcI}$ is positive (and normalized), take any $a\in A$ with $\tau(a,-)$ positive as a functional on $\msU$. As $\Phi_C\in \mcI\subseteq \msU$ is positive, it then follows that also $\Phi_C \lhd a \in \mcI\subseteq \msU$ will be positive, and hence $\psi_{\mcI}(\Phi_C\lhd a) = \tau(a,g)$ must be positive. As this holds for all such $a$, this implies that $g$ is positive. 
\end{proof} 

\begin{Def}
Let $g\in \msU$ be a character on $A$. We say that $\Phi_C$ is \emph{$g$-balanced} if 
\[
\Phi_C = S(\Phi_C)g.
\]
If $g=1$, we say that $\Phi_C$ is balanced.
\end{Def}

Note that in the formalism of \cite{LVD07}, $\Phi_C$ can be seen as a group-like projection in $\msU$ (although only the case of group-like projections in $\mcU$ is considered there), and $\Phi_C$ is balanced if it is a \emph{good} group-like projection in the terminology of \cite[Definition 2.2]{FS09}. 

The following is the main theorem of this section.

\begin{Theorem}\label{TheoInvFuncIsChar}
Let $g\in \msU$ be a character. The following are equivalent: 
\begin{itemize}
\item There exists a non-zero $g$-invariant functional on $\mcI$.
\item The character $\tau(-,g)$ satisfies $\tau(b,g) = \varepsilon(\sigma_A(\sigma_B^{-1}(b))$ for all $b\in B$.
\item The projection $\Phi_C$ is $g$-balanced.
\end{itemize}
In this case, the $g$-invariant functional is unique up to multiplication by a scalar, and a canonical choice is determined by 
\[
\psi_{\mcI}(\Phi_C(a-)) = \tau(a,g). 
\] 
If moreover $g$ is positive, then $\psi_{\mcI}$ is a $g$-invariant integral. 
\end{Theorem}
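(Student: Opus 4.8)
The plan is to establish a cycle of implications among the three conditions, using the canonical formula $\psi_{\mcI}(\Phi_C(a-)) = \tau(a,g)$ as the bridge. First I would address \emph{well-definedness}: by Lemma~\ref{LemEqPhiC} every element of $\mcI$ is of the form $\Phi_C(a-)$ for some $a\in A$, but $a$ is not unique, so I must check that $\Phi_C(a-) = 0$ forces $\tau(a,g) = 0$. The natural way is to compute, for $a,c\in A$,
\[
\psi_{\mcI}(\Phi_C(a-)\lhd c) = \tau(c,g)\,\psi_{\mcI}(\Phi_C(a-))
\]
formally and then read the statement off: one uses $\tau(c, \Phi_C(a-)\lhd (-)) = \Phi_C(ac -)$ together with the invariance property of $\Phi_C$ from \eqref{EqInvC} and the module-coalgebra structure to rewrite $\Phi_C(a-)$ in terms that isolate $\tau(a,g)$. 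Concretely, the $g$-balanced condition $\Phi_C = S(\Phi_C)g$ is exactly what lets one move a $\Phi_C$ past a factor at the cost of producing $g$, and this is the crux of both well-definedness and $g$-invariance of $\psi_{\mcI}$.

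\textbf{Third $\Leftrightarrow$ Second.} I would show the characters $\tau(-,g)$ and $b\mapsto \varepsilon(\sigma_A(\sigma_B^{-1}(b)))$ on $B$ correspond to the $g$-balanced condition on $\Phi_C$. Using Lemma~\ref{LemModAutBInv}, $\sigma_B^{-1}(\pi(\xi,\eta)) = \pi(\Phi_C\delta_A^{1/2}\xi,\delta_A^{1/2}\eta)$, so $\varepsilon(\sigma_A(\sigma_B^{-1}(\pi(\xi,\eta))))$ unwinds via \eqref{EqModAntSq} and the identification \eqref{EqDelasFun} into an inner-product expression involving $\Phi_C$ and powers of $\delta_A$; matching this against $\tau(\pi(\xi,\eta),g) = \langle\xi,\pi(g)\eta\rangle$ for all matrix coefficients with $\xi\in\Phi_C\Hsp_\pi$ should be equivalent to the operator identity $\Phi_C = S(\Phi_C)g$ in $\msU$, once one recalls \eqref{EqAntiSqu2} relating $S$, $\delta_A$, and conjugation. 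Here Lemma~\ref{LemSSS} (especially $(S_A(\Phi_C)\otimes 1)\Delta(\Phi_C) = \Phi_C\otimes\Phi_C$) and \eqref{EqSPhiC} are the structural facts that make the two formulations interchangeable.

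\textbf{First $\Rightarrow$ Third, and uniqueness.} Given a non-zero $g$-invariant $\psi_{\mcI}$, Lemma~\ref{LemCharg} already gives that $g$ is a character. To get $g$-balancedness, I would apply $g$-invariance to carefully chosen elements: since $\mcI = \{\Phi_C(a-)\}$ and the ideal structure from the proof of Lemma~\ref{LemEqPhiC} is available, evaluating $\psi_{\mcI}$ on $\Phi_C(a-)\lhd c$ in two ways and using that $\psi_{\mcI}$ is determined on a cyclic-type generator forces the normalization $\psi_{\mcI}(\Phi_C(a-)) = \psi_{\mcI}(\Phi_C)\tau(a,g)$, hence uniqueness up to scalar; then the consistency of this formula (its well-definedness) is precisely the $g$-balanced identity, closing the loop. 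Finally, for the last sentence: assuming $g$ positive, I must check $\psi_{\mcI}(x^*x)\geq 0$. Writing $x = \Phi_C(a-)$, one has $x^*x = \Phi_C(b-)$ for a suitable $b$ expressible through $a$, $a^*$, $\delta_A$ and $\Phi_C$ using \eqref{EqFormStar}; the claim becomes that $\tau(b,g)\geq 0$, which should follow because $b$ can be arranged so that $\tau(b,-)$ is a positive functional on $\msU$ (it is essentially $\pi(x^*\eta, x^*\eta)$-type positivity pushed through the condition expectation), whence pairing with the positive element $g$ is nonnegative — this is the same mechanism as the positivity argument in Lemma~\ref{LemCharg}.

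\textbf{Main obstacle.} I expect the well-definedness / $g$-invariance computation to be the real work: tracking the Sweedler indices through the left $A$-module coalgebra structure on $C$, the two "twisted" invariances of $\Phi_C$ in \eqref{EqInvC} and Lemma~\ref{LemSSS}, and the interplay of $S_A$, $\sigma_A$, and $\Phi_C$, so that the $g$-balanced hypothesis drops out cleanly as exactly the obstruction. The positivity of $\psi_{\mcI}$ when $g>0$ is conceptually easy but requires writing $x^*x$ in the $\Phi_C(b-)$ form with enough control to see the positivity of $\tau(b,-)$; I would handle it by passing to the picture $\mcI = \oplus_\beta \End(\Gsp_\beta)$ and checking positivity block by block, where $g$ acts as a positive invertible operator and the formula becomes a manifest $\mathrm{Tr}(g^{1/2}\,\cdot\,g^{1/2})$-type expression.
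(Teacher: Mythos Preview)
Your cycle-of-implications outline and the use of the canonical formula $\psi_{\mcI}(\Phi_C(a-)) = \tau(a,g)$ match the paper's strategy, and your sketch of Second $\Leftrightarrow$ Third via matrix coefficients is essentially what the paper does. However, there are two places where you underestimate the work.

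\textbf{Well-definedness.} You say the $g$-balanced identity ``lets one move a $\Phi_C$ past a factor at the cost of producing $g$'' and that this, together with Lemma~\ref{LemSSS} and \eqref{EqSPhiC}, will yield well-definedness. But what you actually need is that $\Phi_C(a-)=0$ forces $\tau(a,g)=0$, and the identities in Lemma~\ref{LemSSS} applied to this hypothesis only give $\Phi_C(a)=0$, not $\tau(a,g)=0$. The paper does not argue directly; it proves a separate Proposition~\ref{PropCharAnn} identifying $\Ann_r(\Phi_C) = \sigma_A^{-1}(\sigma_B(B_+))A$, and the proof of that proposition uses the Takeuchi equivalence \eqref{EqEquivMod} between $\Mod_B$ and $\Mod_A^C$ together with Lemma~\ref{LemEqualFunct}. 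Once $\Ann_r(\Phi_C)$ is known to be generated as a right $A$-module by $\sigma_A^{-1}(\sigma_B(B_+))$, well-definedness of $\psi_{\mcI}$ reduces (using that $g$ is multiplicative) exactly to the second condition, and the link to $g$-balancedness is then the short computation \eqref{EqModMod}. Your plan does not mention this annihilator computation, and I do not see how to avoid it.

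\textbf{Positivity.} Your positivity sketch has a genuine gap. You propose to pass to the block picture $\mcI = \oplus_\beta \End(\Gsp_\beta)$ ``where $g$ acts as a positive invertible operator'' so that $\psi_{\mcI}$ becomes a $\Tr(g^{1/2}\cdot g^{1/2})$-type expression. But $g$ lives in $\msU = \prod_\alpha B(\Hsp_\alpha)$, not in $\msI$; there is no canonical action of $g$ on $\Gsp_\beta$, and the density of $\psi_{\mcI}$ is \emph{not} $g$. The paper's argument is different and more substantive: from $g$-balancedness one gets $S^2(\Phi_C)=g\Phi_C g^{-1}$, hence the automorphism $\kappa_z=\Ad((g\delta_A^{1/2})^{-iz})$ of $\msU$ fixes $\Phi_C$ and restricts to $\mcI$ (via Lemma~\ref{LemEqPhiC}); since $\mcI$ is a direct sum of matrix algebras, this is implemented by some positive invertible $\mu\in\msI$. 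The paper then computes the expansion $\Delta(\Phi_C)=\sum_{\beta,i,j}S(\mu^{-1}e_{ji}^\beta)g\otimes e_{ij}^\beta$ and applies the already-established $g$-invariance to it, obtaining $\psi_{\mcI}=\sum_\beta\Tr_\beta(\mu_\beta\,\cdot\,)$, which is manifestly positive. Producing $\mu$ and that expansion of $\Delta(\Phi_C)$ is the real content of the positivity step.
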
 

The second condition in Theorem \ref{TheoInvFuncIsChar} can be rephrased as follows.
\begin{Cor}
A relatively invariant functional exists on $\mcI$ if and only if the character $\varepsilon\circ \sigma_A\circ \sigma_B^{-1}$ on $B$ can be extended to a character on $A$. 
\end{Cor}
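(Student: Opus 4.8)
The plan is to deduce the Corollary entirely from Theorem \ref{TheoInvFuncIsChar} together with Lemma \ref{LemCharg}, since the statement is nothing but a reformulation of the (first $\Leftrightarrow$ second) equivalence of that theorem, once the parameter $g$ is existentially quantified. Before doing so, I would first record that $\varepsilon\circ\sigma_A\circ\sigma_B^{-1}$ really is a character on $B$, so that the phrase ``can be extended to a character on $A$'' is meaningful. Indeed, $\sigma_B^{-1}$ is an algebra automorphism of $B$ (it is the inverse of the modular automorphism $\sigma_B$ treated in Lemma \ref{LemModAutBInv}), while $\varepsilon\circ\sigma_A$ is a character on $A$, being the counit composed with the algebra automorphism $\sigma_A$. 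Hence the composite $B\xrightarrow{\sigma_B^{-1}}B\hookrightarrow A\xrightarrow{\sigma_A}A\xrightarrow{\varepsilon}\C$ is a unital homomorphism $B\to\C$.

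For the forward implication, suppose a relatively invariant functional $\psi_{\mcI}$ exists. By definition it is $g$-invariant for some $g\in\msU$, and by Lemma \ref{LemCharg} this $g$ is automatically a character on $A$. Applying the implication (first $\Rightarrow$ second) of Theorem \ref{TheoInvFuncIsChar} to this $g$ yields $\tau(b,g)=\varepsilon(\sigma_A(\sigma_B^{-1}(b)))$ for all $b\in B$; that is, the character $g$ on $A$ restricts on $B$ to $\varepsilon\circ\sigma_A\circ\sigma_B^{-1}$. Thus $g$ is the desired extension.

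For the converse, suppose $\varepsilon\circ\sigma_A\circ\sigma_B^{-1}$ extends to a character $g$ on $A$, which we view as an element of $\msU$. Then $g$ satisfies the second condition of Theorem \ref{TheoInvFuncIsChar} by construction, so the (second $\Rightarrow$ first) implication of that theorem produces a non-zero $g$-invariant functional on $\mcI$; being $g$-invariant, it is relatively invariant by definition. I expect no genuine obstacle here, as the argument is a formal restatement. The only point requiring care is the bookkeeping of the quantifier on $g$, namely matching ``relatively invariant'' (which is existential in $g$) with the theorem's formulation (stated for a fixed $g$); this is precisely what Lemma \ref{LemCharg} licenses, since it forces every admissible $g$ to be a character on $A$, so that ranging over all relatively invariant functionals corresponds exactly to ranging over all character extensions of $\varepsilon\circ\sigma_A\circ\sigma_B^{-1}$.
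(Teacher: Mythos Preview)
Your proof is correct and follows exactly the approach the paper intends: the corollary is stated there as a direct rephrasing of the equivalence between the first and second conditions in Theorem \ref{TheoInvFuncIsChar}, and you have simply made explicit the role of Lemma \ref{LemCharg} in handling the existential quantifier over $g$. The paper gives no separate proof, so your argument is precisely the intended one spelled out in full.
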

Using the concrete expressions for $\sigma_A$ and $\sigma_B$ and the fact that any positive grouplike element in $\msU$ commutes with the $\delta_A^{z}$, yet another way to rephrase this is as follows (for the case of relatively invariant integrals).
\begin{Cor}
A relatively invariant integral exists on $\mcI$ if and only if there exists a positive grouplike element $\delta_g^{1/2} = g\delta_A^{1/2}$ such that 
\[
\Phi_C \delta_g^{1/2} = \delta_g^{1/2}\Phi_C,\qquad \Phi_C\delta_A^{1/2}\Phi_C = \Phi_C\delta_g^{1/2}\Phi_C. 
\]
\end{Cor}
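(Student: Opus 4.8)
The plan is to derive this corollary purely by translating the two earlier equivalent conditions of Theorem~\ref{TheoInvFuncIsChar} into the language of the positive grouplike element $\delta_g^{1/2} = g\delta_A^{1/2}$. By Theorem~\ref{TheoInvFuncIsChar} (and Lemma~\ref{LemCharg}), a relatively invariant \emph{integral} exists on $\mcI$ if and only if $\Phi_C$ is $g$-balanced for some \emph{positive} character $g$, i.e.\ $\Phi_C = S(\Phi_C)g$. So the entire task is to show that, for positive $g$, the balancedness condition $\Phi_C = S(\Phi_C)g$ is equivalent to the pair of displayed identities for $\delta_g^{1/2}$. Throughout I will use that $g$ is a grouplike element of $\msU$ (it is a character on $A$), hence $\delta_g^{1/2} = g\delta_A^{1/2}$ is again grouplike and, being a product of a positive grouplike $g$ with the positive grouplike $\delta_A^{1/2}$ which commute by the stated fact, is itself positive.

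First I would rewrite $S(\Phi_C)$ using \eqref{EqAntiSqu2} and \eqref{EqUnitAntip}. Since $\delta_B^{-1/2} = \delta_A^{-1/4}R(\Phi_C)\delta_A^{-1/4}$ lies in $\Phi_C\msU\Phi_C$, and $S(x) = \delta_A^{-1/4}R(x)\delta_A^{1/4}$, one computes $S(\Phi_C) = \delta_A^{-1/4}R(\Phi_C)\delta_A^{1/4} = \delta_A^{-1/4}\bigl(\delta_A^{1/4}\delta_B^{-1/2}\delta_A^{1/4}\bigr)\delta_A^{1/4} = \delta_B^{-1/2}\delta_A^{1/2}$, where $\delta_B^{-1/2} = \Phi_C(\cdots)\Phi_C$ by its very definition. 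Substituting into $\Phi_C = S(\Phi_C)g$ and using that $g$ commutes with each $\delta_A^z$, the balancedness condition becomes an identity relating $\Phi_C$, $\delta_B^{-1/2}$, and $\delta_g^{1/2} = g\delta_A^{1/2}$. Rearranging (multiplying through by $\delta_B^{1/2}$ on the left, which is legitimate inside $\Phi_C\msU\Phi_C$), I expect balancedness to collapse exactly to the single statement $\Phi_C\delta_g^{1/2}\Phi_C = \Phi_C\delta_A^{1/2}\Phi_C$, since $\delta_B^{1/2} = \Phi_C\delta_A^{1/2}\Phi_C$ by definition. This gives the \emph{second} displayed identity.

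The \emph{first} displayed identity, $\Phi_C\delta_g^{1/2} = \delta_g^{1/2}\Phi_C$, should come from comparing the $g$-balanced condition $\Phi_C = S(\Phi_C)g$ with its $*$-adjoint or its image under $R$; equivalently, it encodes that the cutdown of $\delta_g^{1/2}$ to the range of $\Phi_C$ is symmetric, i.e.\ $\Phi_C$ is actually a spectral projection compatible with $\delta_g^{1/2}$. Concretely, I would apply the unitary antipode $R$ (using $R^2 = \id$ and $R(x)^* = R(x^*)$) to $\Phi_C = S(\Phi_C)g$, and combine with the four identities \eqref{EqSPhiC} relating $\Phi_C$ and $S_A^{\pm 1}(\Phi_C)$, to extract that $\delta_g^{1/2}$ leaves $\Phi_C\Hsp_\alpha$ invariant in each block, which is precisely the commutation $\Phi_C\delta_g^{1/2} = \delta_g^{1/2}\Phi_C$. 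The converse direction — that the two displayed identities together force $\Phi_C = S(\Phi_C)g$ — follows by running these computations backwards: commutation lets one pass $\delta_g^{1/2}$ through $\Phi_C$ freely, and the second identity then pins down the value of $S(\Phi_C)g$ on the range of $\Phi_C$, while on the orthogonal complement both sides vanish.

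\textbf{Main obstacle.} The routine algebra is harmless; the genuinely delicate point is the first identity, the \emph{commutation} $\Phi_C\delta_g^{1/2} = \delta_g^{1/2}\Phi_C$. Balancedness $\Phi_C = S(\Phi_C)g$ is a priori only a \emph{one-sided} relation, whereas commutation is two-sided, so I must verify that the extra input — positivity of $g$ together with the self-adjointness of the projection $\Phi_C$ — genuinely upgrades the one-sided balancedness into full commutation, rather than merely into the cutdown identity $\Phi_C\delta_g^{1/2}\Phi_C = \Phi_C\delta_A^{1/2}\Phi_C$. I expect this to hinge on the fact that $\Phi_C = S(\Phi_C)g$ with $g$ positive forces $S(\Phi_C)g$ to be self-adjoint (being equal to the self-adjoint $\Phi_C$), and that a positive element whose product with a self-adjoint projection is self-adjoint must commute with that projection; making this step airtight, block by block in $\prod_\alpha B(\Hsp_\alpha)$, is where the care is required.
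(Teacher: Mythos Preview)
Your proposal is correct and follows essentially the same route as the paper, which merely states that the corollary is a rephrasing of Theorem~\ref{TheoInvFuncIsChar} ``using the concrete expressions for $\sigma_A$ and $\sigma_B$''; you supply the details the paper omits.

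One simplification for your ``main obstacle'': once you have rewritten $g$-balancedness as $\Phi_C = \delta_B^{-1/2}\delta_g^{1/2}$ (with $\delta_B^{-1/2}\in\Phi_C\msU\Phi_C$ self-adjoint and $\delta_g^{1/2}$ positive), right-multiply by $\delta_g^{-1/2}$ to get $\Phi_C\delta_g^{-1/2} = \delta_B^{-1/2}$; since the right-hand side is self-adjoint, taking $*$ gives $\delta_g^{-1/2}\Phi_C = \delta_B^{-1/2}$ as well, whence $\Phi_C\delta_g^{-1/2} = \delta_g^{-1/2}\Phi_C$ and the commutation drops out immediately---no need to invoke $R$ or the identities \eqref{EqSPhiC}. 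The second displayed identity then follows at once, and the converse is exactly as you sketch.
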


We currently do not have any examples of coideals for which there exists no relatively invariant integral. 

Let us now come to the proof of Theorem \ref{TheoInvFuncIsChar}. We start with some preparations. We follow the usual conventions for Sweedler notation: if $(M,\delta)$ is a left $A$-comodule, we write $\delta(m) = m_{(-1)}\otimes m_{(0)}$, and similarly if $(M,\delta)$ is a right comodule we write $\delta(m) = m_{(0)}\otimes m_{(1)}$. 

Recall from \cite[Theorem 2]{Tak79} that, with ${}_B\Mod$ the category of left $B$-modules and ${}_A^C\Mod$ the category of relative left $(C,A)$-comodules, the functors 
\[
A\underset{B}{\otimes}-: {}_B\Mod \rightarrow {}_A^C\Mod, \qquad \coinv_C: {}_A^C\Mod \rightarrow {}_B\Mod,\quad  M\mapsto M^{\coinv} = \{m \in M \mid m_{(-1)}\otimes m_{(0)}= 1\otimes m\}
\]
are quasi-inverses of each other. We will need a similar result for right modules/comodules. We write $\op$ for opposite algebras and $\cop$ for opposite coalgebras. Note first that also $C^{\cop}$ is cosemisimple, and that we have a coalgebra projection map 
\[
\pi_{C^{\cop}}: A \rightarrow C^{\cop},\quad a \mapsto \pi_C(S_A(a)),
\]
making $C^{\cop}$ into a left $A^{\op}$-module coalgebra via $a^{\op}\cdot c = S_A(a)c$. In particular, $A$ is a left $C^{\cop}$-comodule by 
\[
a \mapsto \pi_C(S_A(a_{(1)})) \otimes a_{(2)}.
\]

\begin{Lem}
The space of coinvariants for the left $C^{\cop}$-comodule structure on $A$ is $S_A^{-2}(B)$.
\end{Lem}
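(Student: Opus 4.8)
The plan is to prove the pointwise reformulation that for $b\in A$ one has $b\in B$ if and only if
\[
\pi_C(S_A^{-1}(b_{(1)}))\otimes b_{(2)} = 1_C\otimes b \qquad \text{in } C\otimes A,
\]
an identity I will call $(\ast)$; and that $(\ast)$ holds for $b$ precisely when $S_A^{-2}(b)$ is a coinvariant for the left $C^{\cop}$-comodule structure $a\mapsto \pi_C(S_A(a_{(1)}))\otimes a_{(2)}$. Granting both equivalences, the coinvariant subspace equals $S_A^{-2}\big(\{b\in A : (\ast)\text{ holds}\}\big) = S_A^{-2}(B)$, since $S_A^{-2}$ is a bijection of $A$; this is the assertion of the lemma.

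The equivalence between $(\ast)$ for $b$ and the coinvariance of $S_A^{-2}(b)$ is purely formal. Since $S_A^{-2}$ is a coalgebra map, $\Delta(S_A^{-2}(b)) = S_A^{-2}(b_{(1)})\otimes S_A^{-2}(b_{(2)})$, and $S_A\circ S_A^{-2}=S_A^{-1}$, so for $a=S_A^{-2}(b)$ the coinvariance condition $\pi_C(S_A(a_{(1)}))\otimes a_{(2)} = 1_C\otimes a$ becomes $\pi_C(S_A^{-1}(b_{(1)}))\otimes S_A^{-2}(b_{(2)}) = 1_C\otimes S_A^{-2}(b)$, which is transformed into $(\ast)$ by applying the bijection $\id\otimes S_A^{2}$.

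The implication $b\in B\Rightarrow(\ast)$ is the core step. Beginning from the defining identity $\pi_C(b_{(1)})\otimes b_{(2)} = 1_C\otimes b$ of \eqref{EqBAreCoinv}, I would apply $\id\otimes\Delta$ to obtain $\pi_C(b_{(1)})\otimes b_{(2)}\otimes b_{(3)} = 1_C\otimes b_{(1)}\otimes b_{(2)}$ in $C\otimes A\otimes A$, and then apply the linear map $\Theta\colon C\otimes A\otimes A\to C\otimes A$, $c\otimes x\otimes y\mapsto (S_A^{-1}(x)\cdot c)\otimes y$, where $\cdot$ is the left $A$-module structure on $C$. On the right-hand side this produces $(S_A^{-1}(b_{(1)})\cdot 1_C)\otimes b_{(2)} = \pi_C(S_A^{-1}(b_{(1)}))\otimes b_{(2)}$, while on the left-hand side, since $\pi_C$ is a left $A$-module map, the result is $\pi_C\big(S_A^{-1}(b_{(2)})b_{(1)}\big)\otimes b_{(3)}$, which by the antipode identity $S_A^{-1}(x_{(2)})x_{(1)}=\varepsilon(x)1_A$ reduces to $\pi_C(1_A)\otimes b = 1_C\otimes b$. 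Equating both sides gives $(\ast)$.

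For the converse $(\ast)\Rightarrow b\in B$, apply $\Phi_C\otimes\id$ to $(\ast)$: the right-hand side is $\Phi_C(1_C)\,b = b$, and the left-hand side equals $(S_A^{-1}(\Phi_C)\otimes\id)\Delta(b)$, which is precisely $F_B(b)$ by \eqref{EqCondExp2}. Hence $F_B(b)=b$; since $F_B$ is a projection of $A$ onto $B$ (it maps into $B$ and restricts to the identity on $B$), we conclude $b\in B$. The only genuinely delicate part of this plan is the Sweedler bookkeeping in the $\Theta$-step — arranging the coproduct so that the antipode relation contracts exactly the first two tensor legs, and using the correct form $S_A^{-1}(x_{(2)})x_{(1)}=\varepsilon(x)1_A$ of the antipode identity; everything else is formal manipulation.
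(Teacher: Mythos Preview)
Your proof is correct, but it takes a different route from the paper's. The paper argues in one stroke using the $\dag$-involution on $C$: since $\Ker(\pi_C)=AB_+$ is stable under $x\mapsto S_A(x)^*$, the anti-linear bijection $\dag\otimes(S_A^2(-)^*)$ on $C\otimes A$ converts the coinvariance condition $\pi_C(S_A(a_{(1)}))\otimes a_{(2)}=1_C\otimes a$ directly into $\pi_C((S_A^2(a)^*)_{(1)})\otimes (S_A^2(a)^*)_{(2)}=1_C\otimes S_A^2(a)^*$, i.e.\ $S_A^2(a)^*\in B$, and then $*$-invariance of $B$ finishes. Your approach instead substitutes $a=S_A^{-2}(b)$ to reduce to the auxiliary condition~$(\ast)$, proves $B\Rightarrow(\ast)$ by an antipode-cancellation trick via the map $\Theta$, and proves $(\ast)\Rightarrow B$ by hitting with $\Phi_C\otimes\id$ and recognizing $F_B$. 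This is slightly longer but has the advantage that the forward direction is purely Hopf-algebraic (no $*$-structure), and the $*$-structure enters only implicitly in the converse through the fact that $F_B$ lands in $B$ (which the paper establishes via \eqref{EqStarE}). Either way works; the paper's argument is more compact, while yours isolates the algebraic content more cleanly.
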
 
\begin{proof}
An element $a\in A$ is left $C^{\cop}$-coinvariant if and only if 
\[
\pi_C(S_A(a_{(1)})) \otimes a_{(2)} = \pi_C(1) \otimes a.
\]
 Now as $\Ker(\pi_C)$ is $S_A(-)^*$-invariant, we see that this will hold if and only if 
\[
\pi_C(S_A^2(a_{(1)})^*) \otimes S_A^2(a_{(2)})^* = \pi_C(1)\otimes S_A^2(a)^*,
\] 
i.e.~ $S_A^2(a)^* \in B$ and $a \in S_A^{-2}(B)$. 
\end{proof}

Let us call a right $A$-module and right $C$-comodule $(M,\delta)$ a \emph{relative} right $(A,C)$-comodule if 
\[
(1\otimes a_{(1)})\delta(ma_{(2)}) = \delta(m)(a\otimes 1),\qquad \forall a\in A,m\in M,
\]
which is equivalent to $\delta(ma) = m_{(0)}a_{(2)}\otimes S_A(a_{(1)})m_{(1)}$. We write $\Mod_A^C$ for the resulting category. Clearly we have an equivalence between the relative right $(A,C)$-comodules and the (usual) relative left $(A^{\op},C^{\cop})$-comodules by $(M,\delta) \leftrightarrow (M,\delta^{\cop})$. It then follows from the above and \cite[Theorem 2]{Tak79} that we obtain an equivalence of categories
\begin{equation}\label{EqEquivMod}
- \underset{B}{\otimes}\widetilde{A}: \Mod_B \cong \Mod_A^C,\qquad \coinv_C: \Mod_A^C \cong \Mod_B, 
\end{equation}
where $\widetilde{A}$ is $A$ with the left $B$-module structure $b \cdot a = S_A^{-2}(b)a$ and the relative right $(A,C)$-comodule structure given by right multiplication with $A$ and $\delta(a) = a_{(2)}\otimes \pi_C(S(a_{(1)}))$. Note further that if $M\in \Mod_A^C$, we view $M^{\coinv_C}$, the space of $C$-coinvariant elements, as a right $B$-module via $m\cdot b = mS_A^{-2}(b)$.

\begin{Lem}\label{LemEqualFunct}
The map 
\[
\theta: B \rightarrow A,\qquad b\mapsto \sigma_B^{-1}\sigma_AS_A^{-2}(b)
\]
restricts to an automorphism of $B$, and for all $b\in B$ one has 
\[
\Phi_C(S_A^{-2}(b)) = \varepsilon(\theta(b)).
\]
\end{Lem}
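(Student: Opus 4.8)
The plan is to first describe the map $\sigma_A\circ S_A^{-2}$ on $A$ completely explicitly, and then read off both assertions from that description. Using \eqref{EqAntiSqu} we have $S_A^{-2}(\pi(\xi,\eta)) = \pi(\delta_A^{1/2}\xi,\delta_A^{-1/2}\eta)$, and composing with \eqref{EqModAntSq} gives $(\sigma_A\circ S_A^{-2})(\pi(\xi,\eta)) = \pi(\xi,\delta_A^{-1}\eta)$ on every matrix coefficient. By the coproduct formula for matrix coefficients, the right-hand side equals $(\id\otimes\tau(-,\delta_A^{-1}))\Delta(\pi(\xi,\eta))$, so $\sigma_A\circ S_A^{-2}$ acts on all of $A$ as the right translation $a\mapsto(\id\otimes\tau(-,\delta_A^{-1}))\Delta(a)$ by the character $\delta_A^{-1}$ (the convolution inverse of $\delta_A$), whose inverse is right translation by $\delta_A$, i.e.\ $\sigma_A^{-1}\circ S_A^2$.

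From this, the claim that $\theta$ restricts to an automorphism of $B$ is essentially immediate: since $\Delta(B)\subseteq B\otimes A$, both $\sigma_A\circ S_A^{-2}$ and its inverse send $B$ into $B$, so $\sigma_A\circ S_A^{-2}$ restricts to an algebra automorphism of $B$; in particular $\sigma_A(S_A^{-2}(b))\in B$, which is what legitimizes applying $\sigma_B^{-1}$ to it. As $\sigma_B^{-1}$ is an automorphism of $B$, the composite $\theta=\sigma_B^{-1}\circ(\sigma_A\circ S_A^{-2})_{\mid B}$ is then an automorphism of $B$. The point that needs care here is precisely that \emph{neither} $\sigma_A$ \emph{nor} $S_A^2$ preserves $B$ in general (the failure of $S_A^2$-invariance is exactly the phenomenon this paper is built around), so stability of $B$ under $\sigma_A\circ S_A^{-2}$ is not automatic; it becomes transparent only once one recognizes this composite as translation by a character, which manifestly preserves the right coideal.

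For the displayed identity I would fix $b\in B$, set $c:=\sigma_A(S_A^{-2}(b))\in B$, and invoke the formula $\sigma_B^{-1}=E_B\circ\sigma_A^{-1}$ on $B$ from Lemma \ref{LemModAutBInv}: this gives $\theta(b)=\sigma_B^{-1}(c)=E_B(\sigma_A^{-1}(c))=E_B(S_A^{-2}(b))$. Applying the counit and using $\Phi_C=\varepsilon\circ E_B$ from \eqref{EqCondExp} then yields $\varepsilon(\theta(b))=\varepsilon(E_B(S_A^{-2}(b)))=\Phi_C(S_A^{-2}(b))$, which is the asserted equality. Everything after the identification in the first paragraph is routine bookkeeping with Lemma \ref{LemModAutBInv} and the relation $\Phi_C=\varepsilon\circ E_B$, so I do not expect any genuine obstacle beyond that first step.
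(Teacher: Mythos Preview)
Your proof is correct and takes a genuinely different route from the paper's. The paper verifies that $\theta$ preserves $B$ by direct computation with the explicit formulas \eqref{EqModAntSq}, \eqref{EqAntiSqu}, \eqref{EqModAutB} on a generic element $\pi(\Phi_C\xi,\eta)$; for the identity $\Phi_C(S_A^{-2}(b))=\varepsilon(\theta(b))$ it instead reduces to showing $\Phi_C(\sigma_A^{-1}\sigma_B(b))=\varepsilon(b)$, and then evaluates this on matrix coefficients using the relation $\delta_A^{1/4}R(\Phi_C)\delta_A^{-1/4}=S^{-1}(\Phi_C)$ together with the projection identities \eqref{EqSPhiC}. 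Your argument, by contrast, is more structural: recognizing $\sigma_A\circ S_A^{-2}$ as right convolution by the character $\delta_A^{-1}$ makes stability of the right coideal $B$ immediate without any matrix-coefficient computation, and the second identity then drops out of $\sigma_B^{-1}=E_B\circ\sigma_A^{-1}$ and $\Phi_C=\varepsilon\circ E_B$ without ever invoking \eqref{EqSPhiC} or the unitary antipode. What the paper's approach buys is the companion formula \eqref{EqFormTheta}, namely $\theta=\sigma_A S_A^{-2}\sigma_B^{-1}=S_A^{-2}\sigma_A\sigma_B^{-1}$, which is used later; what your approach buys is a cleaner conceptual explanation of why $B$ is preserved (and a shorter proof of the displayed identity).
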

\begin{proof}
It is clear that $\theta$ maps $B$ into itself by the concrete formulas in \eqref{EqModAntSq}, \eqref{EqAntiSqu} and \eqref{EqModAutB}, with also 
\begin{equation}\label{EqFormTheta}
\theta(b) = \sigma_AS_A^{-2}\sigma_B^{-1}(b) = S_A^{-2}\sigma_A\sigma_B^{-1}(b),\qquad b\in B. 
\end{equation}
Take then $\xi,\eta\in \Hsp_{\pi}$ and put $b = \pi(\Phi_C\xi,\eta)$. Such $b$ are general elements in $B$, and it is sufficient to prove for these that $\Phi_C(\sigma_A^{-1}\sigma_B(b)) = \varepsilon(b)$. But using again \eqref{EqModAntSq} and \eqref{EqModAutB} one finds 
\[
\Phi_C(\sigma_A^{-1}\sigma_B(\pi(\Phi_C\xi,\eta))) = \langle \delta_A^{1/4}R(\Phi_C)\delta_A^{-1/4}\Phi_C\xi,\Phi_C\eta\rangle = \langle S^{-1}(\Phi_C)\Phi_C\xi,\Phi_C\eta\rangle, 
\]
which by $\Phi_C^* = \Phi_C$ and  \eqref{EqSPhiC} reduces to $ \langle \Phi_C\xi,\Phi_C\eta\rangle = \varepsilon(\pi(\Phi_C\xi,\eta)) = \varepsilon(b)$. 
\end{proof}

\begin{Prop}\label{PropCharAnn}
Consider the right $A$-module $\Ann_r(\Phi_C) = \{a\in A\mid \Phi_C(a-)=0\}$. Then
\[
\Ann_r(\Phi_C)  = \sigma_A^{-1}(\sigma_B(B_+))A.
\]
\end{Prop}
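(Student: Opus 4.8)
The plan is to identify $\Ann_r(\Phi_C)$ as the kernel of a natural right $A$-linear map and then use the module/comodule equivalence \eqref{EqEquivMod} together with Lemma \ref{LemEqualFunct} to pin down that kernel. Observe first that $\Ann_r(\Phi_C)$ is manifestly a right $A$-submodule of $A$ (by definition $\tau(c, x \lhd a) = \tau(ac,x)$, so $\Phi_C((a'a)-) = 0$ whenever $\Phi_C(a'-) = 0$), and that the map $a \mapsto \Phi_C(a-) \in \mcI$ from Lemma \ref{LemEqPhiC} is a right $A$-module surjection onto $\mcI$ with kernel exactly $\Ann_r(\Phi_C)$. So the content of the proposition is a concrete description of that kernel, namely that it equals the right ideal $\sigma_A^{-1}(\sigma_B(B_+))A$ generated by the $\sigma_A^{-1}\sigma_B$-twisted augmentation ideal of $B$.

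The key computation is to relate $\Phi_C(a-)$ to the conditional expectation and the coinvariants. First I would note that $\Phi_C(a-) = \varepsilon \circ E_B$ applied after left multiplication, or more usefully that $\Phi_C(ab) $ for $b \in B$ can be manipulated via \eqref{EqInvCom}; combined with Lemma \ref{LemEqPhiC} and the fact that the $\Phi_C(a-)$ form a two-sided ideal in $\mcI$, one sees that $\Ann_r(\Phi_C)$ is determined by how $\Phi_C$ pairs against a spanning set. The cleanest route is via the equivalence \eqref{EqEquivMod}: the right $A$-module map $a \mapsto \Phi_C(a-)$ should be recognized, after identifying $\mcI$ with an appropriate $\coinv_C$ of $\widetilde{A}$ (or its twist), as the counit-type projection, and then Lemma \ref{LemEqualFunct} tells us precisely that $\Phi_C(S_A^{-2}(b)) = \varepsilon(\theta(b))$ with $\theta = \sigma_B^{-1}\sigma_A S_A^{-2}$ an automorphism of $B$. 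Unwinding: $\Phi_C$ vanishes on $S_A^{-2}(b)$ for $b \in B$ iff $\varepsilon(\theta(b)) = 0$ iff $\theta(b) \in B_+$ iff $S_A^{-2}(b) \in \theta^{-1}(B_+) = S_A^{2}\sigma_A^{-1}\sigma_B(B_+)$; substituting $a = S_A^{-2}(b)$, the "$B$-part" of $\Ann_r(\Phi_C)$ is $\sigma_A^{-1}\sigma_B(B_+)$. Then the right-$A$-module generated by this lands inside $\Ann_r(\Phi_C)$ by right-linearity, and the reverse inclusion follows because $\mcI \cong A / \Ann_r(\Phi_C)$ as right $A$-modules while, via the equivalence, $A$ is generated over $B$ (with the twisted action) by a single "coinvariant" copy, so the kernel can be no larger than the right ideal coming from $B_+$.

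Concretely I would argue the two inclusions separately. For $\sigma_A^{-1}(\sigma_B(B_+))A \subseteq \Ann_r(\Phi_C)$: it suffices, by right $A$-linearity, to check $\Phi_C(\sigma_A^{-1}\sigma_B(b)-) = 0$ for $b \in B_+$, and this is exactly the rearranged content of Lemma \ref{LemEqualFunct} (writing $a = S_A^{-2}(b')$ with $b' = \theta(b)$, or running the identity $\Phi_C(\sigma_A^{-1}\sigma_B(b)\cdot c) = \varepsilon$-type directly using \eqref{EqModAutBInv} and invariance of $\Phi_C$ under the coproduct \eqref{EqInvC}). For the reverse inclusion: given $a \in \Ann_r(\Phi_C)$, use the Peter--Weyl decomposition to write $a$ as a sum of matrix coefficients $\pi(\xi,\eta)$; applying $E_B$ and \eqref{EqCondExp}, decompose $a = E_B(a) + (a - E_B(a))$, show $a - E_B(a) \in \Ker(\varepsilon \circ E_B)\cdot A$-type terms lie in the right ideal generated by $B_+$ after the appropriate $\sigma$-twist, and that $E_B(a) \in B$ with $\Phi_C(E_B(a)-) = \Phi_C(a-) = 0$ forces $\varepsilon(\theta(E_B(a)')) = 0$ hence $E_B(a)$ (suitably twisted) lies in $\sigma_A^{-1}\sigma_B(B_+)$. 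The main obstacle I expect is bookkeeping the three interacting twists $\sigma_A$, $\sigma_B$, $S_A^2$ correctly — in particular making sure the conjugation-by-$\Phi_C$ and the non-invariance of $B$ under $\sigma_A$ are tracked so that the automorphism $\theta$ of Lemma \ref{LemEqualFunct} appears with the right orientation, and confirming that $\sigma_A^{-1}\sigma_B(B_+)$ (rather than some other twist of $B_+$) is what generates the annihilator as a \emph{right} ideal; the algebra is routine once the correct statement of Lemma \ref{LemEqualFunct} is plugged in, but an off-by-an-antipode error here would be easy to make.
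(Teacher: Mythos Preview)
Your high-level plan is the paper's plan: use the Takeuchi-type equivalence \eqref{EqEquivMod} together with Lemma \ref{LemEqualFunct}. But the way you assemble the pieces has a gap, and your fallback ``two inclusions'' argument does not close it.

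The missing structural step is this: to invoke \eqref{EqEquivMod} you must first place $\mcI$ in the category $\Mod_A^C$, i.e.\ exhibit a right $C$-comodule structure on $\mcI$ compatible with the right $A$-action. The paper does this via \eqref{EqInvCom}, which gives $\Phi_C(a-) \mapsto \Phi_C(a_{(2)}-)\otimes \pi_C(S_A(a_{(1)}))$; only then is $\widetilde{A}\to\mcI$, $a\mapsto \Phi_C(a-)$, a morphism in $\Mod_A^C$ with kernel $\Ann_r(\Phi_C)$. One then computes $\mcI^{\coinv_C}=\C\Phi_C$ with right $B$-action $\Phi_C\cdot b=\Phi_C(S_A^{-2}(b))\Phi_C$, so under the equivalence the surjection $\widetilde{A}\to\mcI$ corresponds to the character $\chi=\Phi_C\circ S_A^{-2}$ on $B$, and hence $\Ann_r(\Phi_C)=S_A^{-2}(\Ker\chi)A$. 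Lemma \ref{LemEqualFunct} then identifies $\Ker\chi$. Your phrase ``identifying $\mcI$ with an appropriate $\coinv_C$ of $\widetilde{A}$'' has this backwards and skips the comodule structure entirely; without it the equivalence cannot be applied and you have no mechanism forcing the kernel to be generated by a \emph{single} character ideal of $B$.

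Your direct two-inclusion argument does not repair this. The forward inclusion is fine in principle (and indeed follows from \eqref{EqModMod} plus \eqref{EqInvCom}), but your reverse inclusion via $a=E_B(a)+(a-E_B(a))$ does not work: $E_B$ projects using $\Phi_C$ on the \emph{first} leg of $\Delta(a)$, whereas membership in $\Ann_r(\Phi_C)$ concerns $\Phi_C(a-)$, so there is no reason $a-E_B(a)$ should lie in $\sigma_A^{-1}\sigma_B(B_+)A$, nor that $\Phi_C(E_B(a)-)=\Phi_C(a-)$. What actually pins down the kernel as a right ideal generated over $B$ is precisely the categorical fact that $\mcI^{\coinv_C}$ is one-dimensional, and that is what your argument is missing.
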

\begin{proof}
It follows from Lemma \ref{LemEqPhiC} and \eqref{EqInvCom} that the right $A$-module $\mcI$ can be made into a relative right $(C,A)$-comodule by 
\[
\Phi_C(a-) \mapsto \Phi_C(a_{(2)}-)\otimes \pi_C(S_A(a_{(1)})),
\]
and that $\Ann_r(\Phi_C)$ is a relative right $(C,A)$-subcomodule of $\widetilde{A}$, leading to a short exact sequence
\[
\Ann_r(\Phi_C) \rightarrow \widetilde{A}\rightarrow \mcI,
\]
where the last map is given by $a\mapsto \Phi_C(a-)$. 

Now by applying $\Phi_C$ to \eqref{EqInvCom}, we see that $\Phi_C(a-)$ will be coinvariant if and only if $\Phi_C(a-) =\Phi_C(a)\Phi_C$. In particular, it follows that $\mcI^{\coinv_C}$ is a one-dimensional right $B$-module spanned by $\Phi_C$, with 
\[
\Phi_C \cdot b  = \Phi_C(S_A^{-2}(b)-) = \Phi_C(S_A^{-2}(b))\Phi_C. 
\]
By \eqref{EqEquivMod}, we deduce that $\Ann_r(\Phi_C) = S_A^{-2}(\Ker((\Phi_C\circ S_A^{-2})_{\mid B}))A$. But by Lemma \ref{LemEqualFunct} we have 
\[
\Ker((\Phi\circ S_A^{-2})_{\mid B}) = S_A^2(\sigma_A^{-1}(\sigma_B(B_+))),
\]
which finishes the proof. 
\end{proof} 

We are now ready to prove Theorem \ref{TheoInvFuncIsChar}. We will use in the course of the proof again the map 
\[
\Delta: \msU \rightarrow \msU \widehat{\otimes}\msU := \Lin(A\otimes A,\C).
\]
Then characters on $A$ correspond to non-zero grouplike elements in $\msU$ with respect to $\Delta$.

\begin{proof}[Proof (of Theorem \ref{TheoInvFuncIsChar})]
Assume given a general non-zero $g$-invariant functional $\psi_{\mcI}$ on $\mcI$. The $g$-invariance of $\psi_{\mcI}$ gives $\psi_{\mcI}(\Phi_C(a-)) = \tau(a,g)\psi_{\mcI}(\Phi_C)$, so the space of $g$-invariant functionals is one-dimensional. Upon rescaling $\psi_{\mcI}$ if necessary, we may assume $
\psi_{\mcI}(\Phi_C(a-)) = \tau(a,g)$.  By Lemma \ref{LemEqPhiC} and Proposition \ref{PropCharAnn}, such a functional is well-defined on $\mcI$ if and only if 
\begin{equation}\label{EqIdgHom}
\tau(\sigma_A^{-1}\sigma_B(b),g) = \varepsilon(b),\qquad \forall b\in B,
\end{equation}
which by \eqref{EqFormTheta} is the same as the second condition in Theorem \ref{TheoInvFuncIsChar}. Conversely, if the second condition in Theorem \ref{TheoInvFuncIsChar} is satisfied, we can by the above define $\psi_{\mcI}(\Phi_C(a-)) = \tau(a,g)$, and as $g$ is assumed to be a character it is then clear that $\psi_{\mcI}$ is a non-zero $g$-invariant functional. 

Now a general element of $B$ can be written $\pi(\Phi_C\xi,\eta)$ with $\xi,\eta \in \Hsp_{\pi}$ arbitrary. From \eqref{EqModAutB} we see that concretely
\begin{equation}\label{EqModMod}
\sigma_A^{-1}\sigma_B(\pi(\Phi_C\xi,\eta)) = \pi(\delta_A^{1/4}R(\Phi_C) \delta_A^{-1/4}\Phi_C\xi,\eta) =  \pi(S^{-1}(\Phi_C)\Phi_C\xi,\eta) =\pi(S^{-1}(\Phi_C)\xi,\eta),
\end{equation}
where in the last equation we used Corollary \ref{LemSSS}. We can hence rewrite \eqref{EqIdgHom} as 
\[
\langle S^{-1}(\Phi_C)\xi,g\eta\rangle = \langle \Phi_C\xi,\eta\rangle,\qquad \forall \xi,\eta\in \Hsp_{\pi}.
\]
This can clearly hold in general if and only if $S(\Phi_C)g = \Phi_C$, i.e. $\Phi_C$ is $g$-balanced.

Assume now that $g$ is positive. As $g$ is a character, we have $S^2(g) = g$, so $g$ and any power of $\delta_A$ commute. Moreover, from the $g$-balancedness of $\Phi_C$ we deduce that 
\[
S^2(\Phi_C) = S(\Phi_Cg^{-1}) = g\Phi_Cg^{-1}. 
\]If we define 
\[
\kappa_z = \Ad_{(g\delta_A^{1/2})^{-iz}},\qquad z\in \C,
\]
then $\kappa_z$ is an algebra automorphism of $\mcU$ (and of $\msU$) such that
\begin{equation}\label{EqkappaPhi}
\kappa_z(\Phi_C) = \Phi_C,\qquad \Delta(\kappa_z(X))=  (\kappa_z\widehat{\otimes}\kappa_z)\Delta(X),\qquad X\in \msU.  
\end{equation}
Since $\mcI = \{\Phi_C(a-)\mid a\in A\}$ by Lemma \ref{LemEqPhiC}, we see that $\kappa_z$ restricts to a complex one-parametergroup of algebra automorphisms of $\mcI$. It follows that there exists a positive invertible element $\mu\in \msI$ such that 
\[
\kappa_z(x) = \mu^{iz} x\mu^{-iz},\qquad x\in \msI.
\]
In the following, we write $\kappa = \kappa_{-i}$. 

We claim that upon possibly rescaling each $\mu_{\beta}$ with a positive scalar, we have
\begin{equation}\label{EqExprDelPhiC}
\Delta(\Phi_C) = \sum_{\beta,i,j}  S(\mu^{-1} e_{ji}^{\beta})g\otimes e_{ij}^{\beta},
\end{equation}
where $e_i^{\beta}$ is an orthonormal basis of $\Gsp_{\beta}$ and $e_{ij}^{\beta}$ the associated matrix units, the sum converging pointwise on $A\otimes C$. Indeed, we can write 
\[
\Delta(\Phi_C) = \sum_{\beta,i,j} z_{ij}^{\beta}\otimes e_{ij}^{\beta}
\]
with $z_{ij}^{\beta} \in \msU$. By $g$-balancedness, we however also have that 
\[
\Delta(\Phi_C) = (S\otimes S)(\Delta^{\opp}(\Phi_C))(g\otimes g) \in S(\msI)g\widehat{\otimes} \msU,
\]
so necessarily $z_{ij}^{\beta} = S(t_{ij}^{\beta})g$ for $t_{ij}^{\beta} \in \msI$. Since 
\[
\Delta(\Phi_C)(1\otimes y) = \Delta(\Phi_C)(S^{-1}(y)\otimes 1),\qquad (1\otimes y)\Delta(\Phi_C) = (S(y)\otimes 1)\Delta(\Phi_C),\qquad y\in \msI,
\]
 we see that for all indices
\begin{equation}\label{EqIdDett}
 \kappa^{-1}(e_{rs}^{\alpha}) t_{ij}^{\beta} = \delta_{\alpha,\beta}\delta_{j,s}t_{ir}^{\alpha},\qquad t_{ij}^{\beta}e_{rs}^{\alpha} = \delta_{\alpha,\beta} \delta_{i,r} t_{sj}^{\alpha}.
\end{equation}
Now since the second leg of $\Delta(\Phi_C)$ spans $\mcI$, not all $t_{ij}^{\beta}$ are zero for a fixed $\beta$. It then follows straightforwardly that $t_{ij}^{\beta}$ must be a constant non-zero multiple $c_{\beta}$ of $\mu^{-1} e_{ji}^{\beta}$. As $\Delta(\Phi_C)$ needs to be a projection, we easily see that for each $\beta$
\begin{equation}\label{EqSumNorm}
c_{\beta} \sum_{i,j} e_{ji}^{\beta}g^{-1}\mu^{-1}e_{ij}^{\beta} = 1_{\beta},
\end{equation}
so necessarily $c_{\beta}>0$. Rescaling $\mu_{\beta}$ with $c_{\beta}$, we obtain \eqref{EqExprDelPhiC}. But applying the $g$-invariance of $\psi_{\mcI}$ to this expression, we find
\[
\psi_{\mcI}(x) = \sum_{\beta} \Tr_{\beta}(\mu_{\beta}x),\qquad x\in \mcI,
\]
hence $\psi_{\mcI}$ is positive. 
\end{proof}

Let us note the following consequence of the proof of Theorem \ref{TheoInvFuncIsChar}. 

\begin{Cor}
A $\delta_A^{-1/2}$-invariant integral $\psi_{\mcI}$ exists on $\mcI$ if and only if $\Phi_C\delta_A^{1/2}\Phi_C = \Phi_C$, and in this case $\psi_{\mcI}$ is tracial. 
\end{Cor}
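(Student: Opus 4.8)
The plan is to specialize Theorem~\ref{TheoInvFuncIsChar} to the character $g = \delta_A^{-1/2}$ and track through the three equivalent conditions, then extract traciality from the explicit formula for $\psi_{\mcI}$ obtained at the end of the proof of that theorem. First I would check that $\delta_A^{-1/2} \in \msU$ is indeed a character on $A$: this is immediate since $\delta_A$ is a character and $\delta_A^{z}$ is grouplike for every $z \in \C$ by the construction of $\delta_A^z$ recorded after \eqref{EqPW2}. With this in hand, the third bullet of Theorem~\ref{TheoInvFuncIsChar} says a $\delta_A^{-1/2}$-invariant functional exists iff $\Phi_C$ is $\delta_A^{-1/2}$-balanced, i.e.\ $\Phi_C = S(\Phi_C)\delta_A^{-1/2}$. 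Using the formula $S^{-1}(x) = \delta_A^{1/4}R(x)\delta_A^{-1/4}$ from \eqref{EqUnitAntip} together with $R(\Phi_C) = \delta_A^{1/4}\sigma_A$-type identities---more directly, using $\delta_B^{-1/2} = \delta_A^{-1/4}R(\Phi_C)\delta_A^{-1/4} \in \Phi_C\msU\Phi_C$ and the relations \eqref{EqSPhiC}---I would rewrite $S(\Phi_C)\delta_A^{-1/2} = \delta_A^{-1/4}R(\Phi_C)\delta_A^{1/4}\delta_A^{-1/2} = \delta_A^{-1/4}R(\Phi_C)\delta_A^{-1/4}$. So $g$-balancedness becomes $\Phi_C = \delta_A^{-1/4}R(\Phi_C)\delta_A^{-1/4}$. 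Multiplying on both sides by $\Phi_C$ and using $\Phi_C^2 = \Phi_C$, this reads $\Phi_C = \Phi_C\delta_A^{-1/4}R(\Phi_C)\delta_A^{-1/4}\Phi_C = \delta_B^{-1/2}$ as an element of $\Phi_C\msU\Phi_C$; equivalently, $\delta_B^{1/2} = \Phi_C\delta_A^{1/2}\Phi_C = \Phi_C$. This is precisely the stated condition. (One should double-check the direction of the algebra: since $S(\Phi_C)\delta_A^{-1/2}$ already lies in $\Phi_C\msU$ by \eqref{EqSPhiC}, the equation $\Phi_C = S(\Phi_C)\delta_A^{-1/2}$ holds in $\msU$ iff it holds after compression by $\Phi_C$ on the left, and symmetrically on the right using \eqref{EqSPhiC} again—so the reduction to an identity in $\Phi_C\msU\Phi_C$ is legitimate.)

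For the positivity/integral part, $g = \delta_A^{-1/2}$ is positive and invertible in $\msU = \prod_\alpha B(\Hsp_\alpha)$, so Theorem~\ref{TheoInvFuncIsChar} guarantees $\psi_{\mcI}$ is a $\delta_A^{-1/2}$-invariant integral. It remains to show traciality, and here I would return to the element $\mu \in \msI$ and the automorphism group $\kappa_z = \Ad_{(g\delta_A^{1/2})^{-iz}}$ from the end of the proof of Theorem~\ref{TheoInvFuncIsChar}. With $g = \delta_A^{-1/2}$ we have $g\delta_A^{1/2} = 1$, hence $\kappa_z = \id$ for all $z$, and therefore $\kappa_z$ restricted to $\msI$ is the identity; since $\kappa_z(x) = \mu^{iz}x\mu^{-iz}$ on $\msI$, we may take $\mu = 1_{\msI}$ (equivalently, each $\mu_\beta$ is a positive scalar, which we have already normalized to $1$ via \eqref{EqSumNorm}: indeed with $g\delta_A^{1/2}$-conjugation trivial, \eqref{EqSumNorm} becomes $c_\beta\sum_{i,j}e_{ji}^\beta e_{ij}^\beta = 1_\beta$, i.e.\ $c_\beta\dim(\Gsp_\beta) = 1$... so in fact the normalization makes $\mu_\beta = \dim(\Gsp_\beta)^{-1}$—a scalar in each block). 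The formula $\psi_{\mcI}(x) = \sum_\beta \Tr_\beta(\mu_\beta x)$ then has $\mu_\beta$ a scalar multiple of the identity in each block $\End(\Gsp_\beta)$, so $\psi_{\mcI}$ restricted to each block is a multiple of the usual trace, and the blocks are mutually orthogonal ideals; hence $\psi_{\mcI}$ is tracial on $\mcI = \oplus_\beta \End(\Gsp_\beta)$.

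The step I expect to be the main obstacle is the algebraic manipulation showing that $\delta_A^{-1/2}$-balancedness of $\Phi_C$ is equivalent to $\Phi_C\delta_A^{1/2}\Phi_C = \Phi_C$ rather than merely implied by it---one must be careful that compressing the identity $\Phi_C = S(\Phi_C)\delta_A^{-1/2}$ by $\Phi_C$ does not lose information. The key is that the right-hand side $S(\Phi_C)\delta_A^{-1/2}$ already satisfies $\Phi_C\cdot\big(S(\Phi_C)\delta_A^{-1/2}\big) = S(\Phi_C)\delta_A^{-1/2}$ and $\big(S(\Phi_C)\delta_A^{-1/2}\big)\cdot\Phi_C = S(\Phi_C)\delta_A^{-1/2}$ by the identities in \eqref{EqSPhiC} (applied to $S_A^{-1}(\Phi_C)$, together with the fact that $\delta_A^{-1/2}$ commutes with $\Phi_C$ exactly when the balanced condition holds—so this needs to be set up as a fixed-point argument, or one simply observes that both sides of the desired equation lie in $\Phi_C\msU\Phi_C$ once one uses $\delta_B^{-1/2}\in\Phi_C\msU\Phi_C$ from the text). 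Once the equivalence of the scalar condition is pinned down correctly, the positivity and traciality follow mechanically from the already-proved structure theorem, so the corollary is essentially a corollary in the strict sense.
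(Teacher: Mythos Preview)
Your proposal is correct and follows essentially the same route as the paper: specialize Theorem~\ref{TheoInvFuncIsChar} to $g=\delta_A^{-1/2}$, rewrite $S(\Phi_C)\delta_A^{-1/2}=\delta_A^{-1/4}R(\Phi_C)\delta_A^{-1/4}=\delta_B^{-1/2}$, and then observe that $g\delta_A^{1/2}=1$ forces $\kappa_z=\id$ so that $\mu$ is scalar in each block and $\psi_{\mcI}$ is tracial. Your worry about losing information under compression by $\Phi_C$ is unfounded: the paper has already recorded that $\delta_B^{-1/2}=\delta_A^{-1/4}R(\Phi_C)\delta_A^{-1/4}$ lies in $\Phi_C\msU\Phi_C$, so the balancedness equation $\Phi_C=\delta_B^{-1/2}$ is an identity between two elements of $\Phi_C\msU\Phi_C$ with no compression step needed, and it is then equivalent to $\delta_B^{1/2}=\Phi_C\delta_A^{1/2}\Phi_C=\Phi_C$ since $\delta_B^{-1/2}$ is the inverse of $\delta_B^{1/2}$ in that unital algebra.
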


A particular case to which the above corollary applies is the following. 

\begin{Def}
We say that $\mcI \subseteq \msU$ is a \emph{Gelfand pair} if $\Phi_C\msU\Phi_C$ is commutative. We say that $B \subseteq A$ is a  \emph{co-Gelfand pair} if the unital $*$-algebra of $*$-spherical functions
\[
\Xi = \{\pi(\Phi_C \xi,R(\Phi_C)\eta)\mid \pi \textrm{ admissible},\xi,\eta\in \Hsp_{\pi}\}.
\]
is commutative.
\end{Def}
Note that by the formula for the $*$-structure on $A$ together with \eqref{EqSPhiC}, we have indeed that $\Xi$ is a $*$-algebra, which can also be written as 
\[
\Xi = \{\pi(\Phi_C \xi,\delta_A^{1/4}\Phi_C\eta)\mid \pi \textrm{ admissible},\xi,\eta\in \Hsp_{\pi}\}.
\]
Hence we have shifted the algebra of spherical functions as defined in (say) \cite[Section 4]{Let03} with the character $\delta_{A}^{1/4}$ to make it $*$-invariant. This coincides with the definition in other sources such as \cite[Chapter 11]{KS97}, where one looks for functions invariant with respect to a (2-sided) coideal in $\msU$ interacting with the $*$-structure in a different way then by $*$-invariance (and the shift with $\delta_A^{1/4}$ is implemented there then already on the $\msU$-side). 

\begin{Cor}
If $B \subseteq A$ is a co-Gelfand pair, then $\mcI$ admits a $\delta_A^{-1/2}$-invariant trace. 
\end{Cor}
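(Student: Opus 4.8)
The plan is to deduce this corollary from the immediately preceding one, which says that a $\delta_A^{-1/2}$-invariant integral on $\mcI$ exists if and only if $\Phi_C\delta_A^{1/2}\Phi_C = \Phi_C$, and that such an integral is automatically tracial. So the whole content is to show that the co-Gelfand pair hypothesis forces $\Phi_C\delta_A^{1/2}\Phi_C = \Phi_C$. First I would unwind what commutativity of $\Xi$ means: $\Xi$ is the $*$-algebra spanned by matrix coefficients $\pi(\Phi_C\xi,\delta_A^{1/4}\Phi_C\eta)$, and under the isomorphisms \eqref{EqIsoFunct} identifying $A$ (via $\Phi_A$) with $\mcU \subseteq \msU$, multiplication in $A$ corresponds to convolution, i.e. to the product in $\msU$; so commutativity of $\Xi$ should translate into commutativity of a suitable corner of $\mcU$, and indeed one expects $\Xi$ to be (anti-)isomorphic as a $*$-algebra to $\Phi_C\mcU\Phi_C$ — this is essentially the statement that a co-Gelfand pair $B\subseteq A$ is the same as a Gelfand pair $\mcI\subseteq\msU$ in the sense defined just above. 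I would make this identification precise using the Peter–Weyl orthogonality relations \eqref{EqPW1}–\eqref{EqPW2} together with the formula \eqref{EqFormStar} for the $*$-structure, which is exactly what makes the $\delta_A^{1/4}$-shifted matrix coefficients close under $*$.

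Granting that, the key step is: if $\Phi_C\msU\Phi_C$ (equivalently $\Phi_C\mcU\Phi_C$) is commutative, then $\Phi_C\delta_A^{1/2}\Phi_C = \Phi_C$. Here I would argue componentwise in the decomposition $\msU \cong \prod_\alpha B(\Hsp_\alpha)$. Fix $\alpha$; then $(\Phi_C)_\alpha$ is the orthogonal projection onto the $\msI$-fixed vectors $\Phi_C\Hsp_\alpha$, and the corner $(\Phi_C)_\alpha B(\Hsp_\alpha)(\Phi_C)_\alpha = B(\Phi_C\Hsp_\alpha)$ is commutative by hypothesis, which forces $\dim \Phi_C\Hsp_\alpha \le 1$ for every $\alpha$ — that is, the fixed-point space is at most one-dimensional. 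Now $\delta_A^{1/2}$ is a positive invertible element of $\msU$ with $(\delta_A^{1/2})_\alpha \in B(\Hsp_\alpha)$ positive; so $\Phi_C\delta_A^{1/2}\Phi_C$, compressed to $\Phi_C\Hsp_\alpha$, is a positive operator on a space of dimension $\le 1$, hence a non-negative scalar times $(\Phi_C)_\alpha$. To pin this scalar down to $1$ I would use the relations in \eqref{EqSPhiC} and \eqref{EqIdPhiCC}: we have $\Phi_C S_A(\Phi_C) = S_A(\Phi_C)$, $S_A(\Phi_C)\Phi_C = \Phi_C$, and from \eqref{EqAntiSqu2} $S^2(\Phi_C)=\delta_A^{-1/2}\Phi_C\delta_A^{1/2}$; combining these should give $\Phi_C\delta_A^{1/2}\Phi_C\cdot\Phi_C\delta_A^{-1/2}\Phi_C = \Phi_C$ after using that $\delta_B^{\pm1/2}=\Phi_C\delta_A^{\pm1/2}\Phi_C$ are mutually inverse in $\Phi_C\msU\Phi_C$ (noted in the text). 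In the one-dimensional corner this says the scalar $\lambda_\alpha \ge 0$ representing $\Phi_C\delta_A^{1/2}\Phi_C$ satisfies $\lambda_\alpha \cdot \lambda_\alpha^{-1} = 1$, which is automatic — so I need a sharper normalization. The cleaner route is to invoke the unitary antipode: from \eqref{EqUnitAntip} and \eqref{EqSPhiC}, $\delta_B^{-1/2} = \delta_A^{-1/4}R(\Phi_C)\delta_A^{-1/4} \in \Phi_C\msU\Phi_C$ and $R(\Phi_C)$ is a self-adjoint idempotent conjugate-to $\Phi_C$; tracking the positive scalar through $R$ (which is $*$-preserving and involutive) forces $\delta_B^{1/2}=\Phi_C$ in each one-dimensional corner, i.e. $\Phi_C\delta_A^{1/2}\Phi_C=\Phi_C$.

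Having established $\Phi_C\delta_A^{1/2}\Phi_C = \Phi_C$, the preceding corollary applies directly: there exists a $\delta_A^{-1/2}$-invariant integral $\psi_{\mcI}$ on $\mcI$, and it is tracial. That is precisely the asserted statement, so the proof is complete once these pieces are assembled.

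The main obstacle, I expect, is the bookkeeping in the componentwise argument — specifically getting the positive scalar $\lambda_\alpha$ on the one-dimensional fixed space to equal $1$ rather than merely some positive number, since the purely multiplicative relations are scale-invariant. The resolution should come from the interplay of $\Phi_C$, $S_A$ (or $R$) and $\delta_A$ in \eqref{EqSPhiC} and \eqref{EqUnitAntip}: the element $R(\Phi_C)$ is again a projection (hence has "scalar $1$" on its range), and $\delta_B^{-1/2}=\delta_A^{-1/4}R(\Phi_C)\delta_A^{-1/4}$ exhibits $\delta_B^{-1/2}$, compressed to a one-dimensional corner, as the compression of a genuine projection conjugated by positive invertibles — and one checks that in dimension one this compression must itself be the rank-one projection, i.e. the scalar is $1$. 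A secondary, more routine obstacle is verifying carefully that commutativity of $\Xi$ really is equivalent to commutativity of $\Phi_C\msU\Phi_C$; this is where the $\delta_A^{1/4}$-shift in the definition of $\Xi$ earns its keep, via \eqref{EqFormStar} and the Peter–Weyl relations, and it is essentially the remark made in the text that a co-Gelfand pair for $B\subseteq A$ is the same thing as a Gelfand pair for $\mcI\subseteq\msU$.
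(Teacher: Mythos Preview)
Your reduction from the co-Gelfand condition on $\Xi$ to the Gelfand condition on $\Phi_C\msU\Phi_C$ rests on a mistaken premise: the Fourier-type maps $a\mapsto\Phi_A(a-)$ and $a\mapsto\Phi_A(-a)$ in \eqref{EqIsoFunct} are linear bijections $A\to\mcU$, but they are \emph{not} algebra homomorphisms --- the product in $A$ does not correspond under these maps to the convolution product in $\mcU$. (In the classical case $A=C(G)$ the algebra $\Xi$ of bi-invariant functions under the pointwise product is \emph{always} commutative; it is the convolution algebra whose commutativity characterizes Gelfand pairs, and these are genuinely different structures.) So your argument does not produce any (anti-)isomorphism between $\Xi$ and $\Phi_C\mcU\Phi_C$, and the implication ``co-Gelfand $\Rightarrow$ Gelfand'' on which the rest of your proof depends is neither established by your reasoning nor asserted anywhere in the paper.

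The paper avoids this reduction entirely. It works directly with $\Xi$: from the Peter--Weyl relations \eqref{EqPW1}--\eqref{EqPW2} one computes that the modular automorphism of the faithful positive functional $\Phi_A|_{\Xi}$ is $\sigma_\Xi(\pi(\xi,\eta))=\pi(\delta_B^{-1/2}\xi,\,R(\delta_B^{1/2})\eta)$. Commutativity of $\Xi$ forces $\sigma_\Xi=\id$, which immediately gives $\delta_B^{-1/2}=\Phi_C$, i.e.\ $\Phi_C\delta_A^{1/2}\Phi_C=\Phi_C$, and the preceding corollary finishes. Incidentally, your componentwise scalar argument \emph{can} be salvaged under the Gelfand hypothesis: combining $S(\Phi_C)\Phi_C=\Phi_C$ with $S(\Phi_C)=\delta_A^{-1/4}R(\Phi_C)\delta_A^{1/4}$, and using that $R(\Phi_C)$ is a genuine projection of rank one on $\Hsp_\alpha$, one finds $\|\delta_A^{1/4}v_\alpha\|=1$ and hence $\lambda_\alpha=\langle v_\alpha,\delta_A^{1/2}v_\alpha\rangle=1$. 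So the Gelfand condition \emph{also} implies the conclusion --- but that is a parallel statement, not the corollary as written, and you have not bridged the gap from co-Gelfand to Gelfand.
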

\begin{proof}
We are to show that $\delta_B^{1/2} = \Phi_C$. However, it follows immediately from the orthogonality relations \eqref{EqPW1} and \eqref{EqPW2} that $(\Phi_{A})_{\mid \Xi}$ admits (in general) the modular automorphism 
\[
\sigma_{\Xi}(\pi(\xi,\eta)) = \pi(\delta_B^{-1/2}\xi,R(\delta_B^{1/2})\eta). 
\]
Since by assumption $\Xi$ is commutative, it follows that $\sigma_{\Xi} = \id$ and hence $\delta_B^{-1/2} = \Phi_C = \delta_B^{1/2}$.
\end{proof}

\section{Invariant functionals on the Drinfeld double coideal}\label{SecInfFuncDD}

\subsection{Drinfeld double coideals}

Let $A$ be a CQG Hopf $*$-algebra with dual $\mcU \subseteq \msU$. We can construct a $*$-algebra $\mcD(A,\msU)$ generated by $A$ and $\msU$ with commutation relations given by 
\[
xa = a_{(2)}(S_A^{-1}(a_{(1)})\rhd x\lhd a_{(3)}),\qquad x\in \msU,a\in A. 
\]
Inside, we have the \emph{Drinfeld double $\mcD(A,\mcU)$ of $A$}, spanned by elements of the form $ax$ for $a\in A$ and $x\in \mcU$. The multiplication maps
\[
A\otimes \mcU \rightarrow \mcD(A,\mcU),\qquad \mcU \otimes A \rightarrow \mcD(A,\mcU)
\]
are then linear isomorphisms. One can make $\mcD(A,\mcU)$ into a \emph{multiplier Hopf $*$-algebra with (positive) invariant integrals} \cite{VD98,DrVD01,DeVD04} containing, suitably interpreted, $(A,\Delta)$ and $(\mcU,\Delta^{\opp})$ as copies. As we will not explicitly need this extra structure, we refrain from spelling out the details. 

Assume now that $B\subseteq A$ is a coideal $*$-subalgebra $B$, and let $\mcI \subseteq \msI \subseteq \msU$ be the stabilizer dual. Then $\mcD(A,\msU)$ contains a subalgebra $\mcD(B,\msI)$ generated by $B$ and $\msI$. The `orthogonality' between $B$ and $\msI$  gives that the commutation relations simplify to 
\[
yb = b_{(1)}(y\lhd b_{(2)}), \qquad by = (y\lhd S_A^{-1}(b_{(2)}))b_{(1)},\qquad b\in B, y\in \msI. 
\]
The $*$-subalgebra $\mcD(B,\mcI)$ spanned by elements of the form $yb$ with $y\in \mcI$ and $b\in B$ is called the  \emph{quantum double coideal} of $B$, and one can easily check that also in this case the multiplication maps
\[
B\otimes \mcI \rightarrow \mcD(B,\mcI),\qquad \mcI \otimes B \rightarrow \mcD(B,\mcI)
\]
are linear isomorphisms. The $*$-algebra $\mcD(B,\mcI)$ can be considered as a right coideal $*$-subalgebra in (the multiplier $*$-algebra of) $\mcD(A,\mcU)$, but again we will not have any need to spell this out explicitly. Note however that $\mcD(B,\mcI)$ is naturally an $\msU$-$A$-bimodule via 
\[
x \rhd (yb) = y(x\rhd b),\qquad (yb) \lhd a = (y\lhd a)b,\qquad x\in \msU,a \in A, y \in \mcI,b\in B,  
\]
where $x\rhd b = (\id\otimes \tau(-,x))\Delta(b)$. 

We observe the following easy lemma. 

\begin{Lem}\label{LemRepIsBound}
Any $*$-representation of $\mcD(B,\mcI)$ on a pre-Hilbert space is bounded.
\end{Lem}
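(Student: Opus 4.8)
The key point is that $\mcD(B,\mcI)$ is, as a vector space, the image of the linear isomorphism $\mcI\otimes B\to\mcD(B,\mcI)$, and that both factors carry a great deal of structure that forces boundedness. The plan is to show that every generator acts as a bounded operator and that the algebra is generated by a set on which the representation is bounded, so that the whole representation is bounded. More precisely, I would first treat the copy of $\mcI$ and then the copy of $B$ separately, and then combine.

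\textbf{Step 1: the $\mcI$-part.} Let $\pi$ be a $*$-representation of $\mcD(B,\mcI)$ on a pre-Hilbert space $V$. Restricting to $\mcI=\oplus_\beta\End(\Gsp_\beta)$, we have a $*$-representation of a (possibly infinite) direct sum of matrix algebras. Each block $\End(\Gsp_\beta)$ is a finite-dimensional $C^*$-algebra, hence any $*$-representation of it on a pre-Hilbert space is automatically bounded (a matrix algebra has a unique $C^*$-norm, and any $*$-homomorphism into bounded operators is contractive on the self-adjoint part; the standard argument: for $x=x^*$, $\pi(x)^n=\pi(x^n)$ and $\|x^n\|$ grows like $\|x\|^n$, forcing $\|\pi(x)\|\le\|x\|$). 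Thus $\pi|_{\mcI}$ is a bounded (non-degenerate or degenerate) $*$-representation, and in particular each minimal central projection $1_\beta\in\mcI$ acts as an orthogonal projection, and the algebraic span $\sum_\beta 1_\beta V$ is a (possibly non-closed) subspace of $V$ on which everything is controlled.

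\textbf{Step 2: the $B$-part.} Here I would use that $B\subseteq A$ and that $A$ is a CQG Hopf $*$-algebra, hence admits a (faithful) Hilbert-space completion; concretely $A$ acts on the GNS space $L^2(A,\Phi_A)$, and for each irreducible corepresentation every matrix coefficient $\pi_\alpha(\xi,\eta)$ acts boundedly, with norm bounded in terms of the $\alpha$-block only. The crucial structural fact is that for a CQG Hopf $*$-algebra every element $b\in B\subseteq A$ lies in the span of finitely many matrix coefficients, and $B$ is a $*$-algebra generated by such elements. The precise tool is: for any $*$-representation $\rho$ of $A$ (or of $B$) on a pre-Hilbert space, each $b\in A$ satisfies $\|\rho(b)\|\le\|b\|_{\mathrm{univ}}$, because in a CQG Hopf $*$-algebra the universal $C^*$-seminorm is in fact a norm and dominates every $*$-representation's operator seminorm. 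Equivalently, one argues block by block: writing $b\in B$ as a finite sum of matrix coefficients $\pi_\alpha(\xi,\eta)$ with $\alpha$ ranging over a finite set, and using that in a $*$-algebra generated by the entries of a finite-dimensional unitary corepresentation every $*$-representation is bounded (the unitarity relation $u u^*=1=u^*u$ in $M_n(A)$ forces $\|\rho(u_{ij})\|\le1$). So $\pi|_B$ is a bounded $*$-representation.

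\textbf{Step 3: combining.} Since the multiplication map $\mcI\otimes B\to\mcD(B,\mcI)$ is a linear isomorphism, every element of $\mcD(B,\mcI)$ is a finite sum $\sum y_k b_k$ with $y_k\in\mcI$, $b_k\in B$, and $\pi(\sum y_k b_k)=\sum\pi(y_k)\pi(b_k)$ is a finite sum of products of bounded operators, hence bounded. Thus $\pi$ is bounded. \emph{The main obstacle} is Step 2: one must be careful that a generic $b\in B$, although it lives in $A$, need not be a single matrix coefficient of an irreducible corepresentation, so one cannot directly invoke finite-dimensionality of a single block; instead one should invoke the (standard, and available from the cited \cite[Chapter 1]{NT14} machinery) fact that $A$ carries a largest $C^*$-norm and every $*$-representation of $A$ on a pre-Hilbert space is bounded by it — equivalently, that $B$ (being a unital $*$-subalgebra of a CQG Hopf $*$-algebra) is an algebra of bounded elements. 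Modulo that input, the proof is immediate; indeed the cleanest write-up simply says: $\mcI$ is a direct sum of finite-dimensional $C^*$-algebras so $\pi|_{\mcI}$ is bounded; $B\subseteq A$ consists of bounded elements (e.g.\ because $A$ has a faithful GNS representation with respect to $\Phi_A$ and the norm there dominates every $*$-representation), so $\pi|_B$ is bounded; and $\mcD(B,\mcI)=\mcI\cdot B$ as vector spaces, so $\pi$ is bounded.
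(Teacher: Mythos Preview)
Your strategy matches the paper's: bound $\pi|_{\mcI}$ using that $\mcI$ is a direct sum of matrix algebras, bound the $B$-action using the unitarity relations coming from the CQG structure, and combine via the factorization $\mcD(B,\mcI)=\mcI\cdot B$. The paper's proof is essentially the two-line version you sketch at the end.

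There is, however, one point you glide over that the paper handles explicitly. In Step~3 you write $\pi(\sum y_k b_k)=\sum\pi(y_k)\pi(b_k)$, and throughout Step~2 you speak of ``$\pi|_B$''. But $B$ is \emph{not} a subalgebra of $\mcD(B,\mcI)$: the algebra $\mcI$ is non-unital (an infinite direct sum of matrix algebras), so $1_A\cdot b=b$ does not lie in $\mcD(B,\mcI)$, and $\pi(b)$ is not a priori defined. Your appeal to a universal $C^*$-norm on $A$ likewise does not directly apply, since a $*$-representation of $B$ need not extend to one of $A$. The paper fixes this by observing that the commutation relations force $\mcI V$ to carry a well-defined $*$-representation of $B$ via $\pi(b)(\pi(x)v):=\pi(bx)v$; one then bounds \emph{this} $B$-representation. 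Once you insert that sentence, your argument is complete.

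For Step~2 itself, your unitarity argument is fine and is in fact what the paper uses: choosing an orthonormal basis $\{e_i\}$ of $\Hsp_\alpha$ and $\xi\in\Phi_C\Hsp_\alpha$, the row relation $\sum_i \pi_\alpha(\xi,e_i)\pi_\alpha(\xi,e_i)^*=\|\xi\|^2\cdot 1$ (or the $\delta_A^{1/4}$-twisted version the paper writes, citing \cite{Boc95}) lies entirely in $B$ and bounds each $\rho(\pi_\alpha(\xi,e_i))$. Since such matrix coefficients span $B$, the $B$-representation is bounded.
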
 
\begin{proof}
If $(V,\pi)$ is a $*$-representation of $\mcD(B,\mcI)$, then it is in particular an $\mcI$-representation. Since $\mcI$ is a direct sum of matrix algebras, it follows that this is necessarily a bounded $*$-representation. From the  commutation relations it follows immediately that $\mcI \Hsp$ carries a $*$-representation $\pi$ of $B$ such that $\pi(b)\pi(x) =\pi(bx)$ for all $b\in B,x\in \mcI$. It is now  sufficient to observe that any $*$-representation of $B$ on a pre-Hilbert space is bounded, which follows from the fact that, for $e_i$ an orthonormal basis of $\Hsp_{\alpha}$,
\[
\sum_i \pi_{\alpha}(\xi,\delta_A^{1/4} e_i)^*\pi_{\alpha}(\xi,\delta_A^{1/4}e_i) = \|\delta_A^{1/4}\xi\|^21_A,\qquad \xi\in \Phi_C\Hsp_{\alpha},
\] 
cf.\ \cite{Boc95}. 
\end{proof} 

We further keep all notation as in the previous section. Assume that $g\in \msU$ is a positive grouplike element, and that $\Phi_C$ is $g$-balanced. Let $\psi_{\mcI} = \sum_{\beta} \Tr(\mu_{\beta}-)$ be the associated $g$-invariant positive functional on $\mcI$, with associated one-parametergroup $\kappa_z(x) = \mu^z x \mu^{-z}$ for $x\in \msI$. We also write again $\kappa = \kappa_{-i}$.  

\begin{Prop}\label{PropInvFunct}
The functional 
\[
\varphi_{\mcD}: \mcD(B,\mcI) \rightarrow \C,\qquad xb \mapsto \psi_{\mcI}(x)\Phi_A(b),\qquad b\in B,x\in \mcI
\]
is a positive functional on $\mcD(B,\mcI)$. Moreover, $\varphi_{\mcD}$ is modular with modular automorphism given by 
\begin{equation}\label{EqModAut}
\sigma_{\mcD}(yb) = \kappa(y)\sigma_B(g^{-1}\rhd b), \qquad y\in \mcI,b\in B.
\end{equation}

The functional $\varphi_{\mcD}$ is $g$-invariant in the sense that 
\[
\varphi_{\mcD}(x \rhd (yb) \lhd a) =\varepsilon(x) \varphi_{\mcD}(yb) \tau(a,g).  
\]
\end{Prop}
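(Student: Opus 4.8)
The plan is to establish the three assertions in turn, building on the structure already set up. For \emph{positivity} of $\varphi_{\mcD}$, I would first record that by Lemma \ref{LemRepIsBound} every $*$-representation of $\mcD(B,\mcI)$ is bounded, so it suffices to check that $\varphi_{\mcD}$ is a state up to normalization, or more directly that $\varphi_{\mcD}(d^*d)\geq 0$ for $d = \sum_i x_i b_i$. The cleanest route is to use the linear isomorphism $\mcI\otimes B \cong \mcD(B,\mcI)$ to write $d^*d$ back in the normal-ordered form $\sum x b$, compute $\varphi_{\mcD}(d^*d)$ using the commutation relation $b^*y^* = (y^*\lhd S_A^{-1}(b^*_{(2)}))b^*_{(1)}$ together with $\Phi_A$-invariance and the Peter-Weyl relations \eqref{EqPW1}--\eqref{EqPW2}, and recognize the result as a sum of inner products. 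Alternatively, and perhaps more transparently, one constructs the GNS-type representation: $\mcD(B,\mcI)$ acts on $\mcI\otimes B$ (or on a suitable completion) with the given bimodule structure, $\psi_{\mcI}$ being positive on $\mcI$ (Theorem \ref{TheoInvFuncIsChar}) and $\Phi_A$ positive on $B$ (by \eqref{EqPW1}), and $\varphi_{\mcD}$ is the vector functional at $\Phi_C\otimes 1$; positivity is then automatic. I expect this GNS argument to be the main conceptual obstacle, since one must verify the module axioms interact correctly with both $\psi_{\mcI}$ and $\Phi_A$ simultaneously.

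For the \emph{modular automorphism}, I would verify directly that the map $\sigma_{\mcD}$ in \eqref{EqModAut} satisfies $\varphi_{\mcD}((yb)(y'b')) = \varphi_{\mcD}((y'b')\sigma_{\mcD}(yb))$ on generators. Writing out both sides, the left side uses the product $yb\cdot y'b' = y b_{(1)} (y'\lhd b_{(2)}) b' $ after pushing $y'$ past $b$ via $by' = (y'\lhd S_A^{-1}(b_{(2)}))b_{(1)}$, so after normal-ordering one gets an expression to which $\varphi_{\mcD} = \psi_{\mcI}\otimes\Phi_A$ applies. The key computational inputs are: $\psi_{\mcI}$ is $\kappa$-modular on $\mcI$ (established in the proof of Theorem \ref{TheoInvFuncIsChar}, where $\mu$ arises precisely as its modular element), $\Phi_A$ is $\sigma_A$-modular on $A$ hence $\sigma_B$-modular on $B$ (Lemma \ref{LemModAutBInv}), and the $g$-balancedness $\Phi_C = S(\Phi_C)g$ which controls how $\psi_{\mcI}$ responds to the right $A$-action — concretely $\psi_{\mcI}(x\lhd a) = \tau(a,g)\psi_{\mcI}(x)$. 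The twist $b\mapsto g^{-1}\rhd b$ inside $\sigma_B$ in \eqref{EqModAut} should emerge exactly from commuting the $\mcI$-part past the $B$-part and accounting for the $g$-invariance; I would be careful that $g^{-1}\rhd b$ lands back in $B$, which holds because $B$ is a right coideal and $g$ is grouplike so $g^{-1}\rhd b = (\id\otimes \tau(-,g^{-1}))\Delta(b) \in B$.

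For the \emph{$g$-invariance} identity $\varphi_{\mcD}(x\rhd(yb)\lhd a) = \varepsilon(x)\varphi_{\mcD}(yb)\tau(a,g)$: by the bimodule formulas, $x\rhd(yb)\lhd a = y(x\rhd b)(\,\cdot\,)$ — more precisely $x\rhd (yb)\lhd a = (y\lhd a')\, (x\rhd b \lhd a'')$ type expression once one sorts out that $a$ acts on $yb$ through $(yb)\lhd a = (y\lhd a)b$, so in fact only $y$ is touched by $a$ and only $b$ by $x$, giving $x\rhd(yb)\lhd a = (y\lhd a)(x\rhd b)$. Applying $\varphi_{\mcD} = \psi_{\mcI}\otimes \Phi_A$: on the first leg $\psi_{\mcI}(y\lhd a) = \tau(a,g)\psi_{\mcI}(y)$ by $g$-invariance of $\psi_{\mcI}$; on the second leg $\Phi_A(x\rhd b) = \Phi_A((\id\otimes\tau(-,x))\Delta(b)) = \tau(1,x)\Phi_A(b) = \varepsilon(x)\Phi_A(b)$ by invariance of $\Phi_A$. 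Multiplying gives the claim. This last part is essentially immediate given the preceding set-up, so it is not where the difficulty lies; the real work is confirming the modular automorphism formula, which I expect to require the most careful bookkeeping of Sweedler legs and repeated use of both \eqref{EqSPhiC} and $g$-balancedness.
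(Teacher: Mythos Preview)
Your proposal is correct and follows essentially the same approach as the paper. One point worth flagging: you overestimate the difficulty of positivity and underestimate how cleanly it falls out. The paper simply computes, for $b,c\in B$ and $x,y\in\mcI$,
\[
\varphi_{\mcD}((cy)^*bx) = \varphi_{\mcD}\big(y^*(x \lhd S_A^{-1}((c^*b)_{(2)}))(c^*b)_{(1)}\big) = \psi_{\mcI}(y^*x)\Phi_A(c^*b),
\]
where the second equality uses only the left invariance $(\Phi_A\otimes\id)\Delta(c^*b)=\Phi_A(c^*b)1$ to collapse the Sweedler sum --- no Peter--Weyl relations or separate GNS construction are needed. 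This single identity simultaneously exhibits the sesquilinear form as the tensor product of the two positive forms $\psi_{\mcI}$ and $\Phi_A$ (so positivity is immediate) and \emph{is} the GNS inner product you were aiming for. Your instinct that a GNS picture underlies it is right, but it is a consequence of this line rather than a prerequisite.

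For modularity the paper does exactly what you propose, organized as four checks (left/right multiplication by an $\mcI$-element and by a $B$-element against a fixed $xb$); the only inputs are the $\kappa$-modularity of $\psi_{\mcI}$, the $\sigma_B$-modularity of $\Phi_A|_B$, and the $g$-invariance $\psi_{\mcI}(x\lhd a)=\tau(a,g)\psi_{\mcI}(x)$. Neither \eqref{EqSPhiC} nor the $g$-balancedness identity is invoked directly here --- they were already absorbed into the construction of $\psi_{\mcI}$ in Theorem~\ref{TheoInvFuncIsChar}. Your $g$-invariance argument is exactly right and matches the paper's ``immediate''.
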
 
Modularity of $\varphi_{\mcD}$ means that 
\[
\varphi_{\mcD}(xy) =\varphi_{\mcD}(y\sigma_{\mcD}(x)),\qquad \forall x,y\in \mcD(B,\mcI). 
\]
\begin{proof}
We easily compute for $x,y\in \mcI$ and $b,c \in B$ that
\begin{equation}\label{EqvarphiProd}
\varphi_{\mcD}((cy)^*bx) = \varphi_{\mcD}(y^*(x \lhd (S_A^{-1}(c^*b)_{(2)}))(c^*b)_{(1)}) = \psi_{\mcI}(y^*x)\Phi_A(c^*b),
\end{equation}
from which positivity of $\varphi_{\mcD}$ follows. 

The modularity of $\varphi_{\mcD}$ with respect to the proposed modular automorphism follows from the following computations for $x,y,z\in \mcI$ and $b,c,d\in B$: 
\[
\varphi_{\mcD}(yxb) = \psi_{\mcI}(yx)\Phi_A(b) = \psi_{\mcI}(x\kappa(y))\Phi_A(b),
\]
\[
\varphi_{\mcD}(xbz) = \varphi_{\mcD}(x(z\lhd S_A^{-1}(b_{(2)}))b_{(1)}) =  \psi_{\mcI}(x (z\lhd S_A^{-1}(b_{(2)})))\Phi_A(b_{(1)}) = \psi_{\mcI}(xz)\Phi_A(b),
\]
\[
\varphi_{\mcD}(cxb) = \psi_{\mcI}(x\lhd S_A^{-1}(c_{(2)}))\Phi_A(c_{(1)}b) = \tau(S_A^{-1}(c_{(2)}),g)\psi_{\mcI}(x)\Phi_A(c_{(1)}b) = \psi_{\mcI}(x) \Phi_A(b \sigma_B(g^{-1}\rhd c)),
\]
\[
\varphi_{\mcD}(xbd) = \psi_{\mcI}(x)\Phi_A(bd). 
\]
The $g$-invariance of $\varphi_{\mcD}$ is immediate. 
\end{proof}

\begin{Cor}\label{CorDrinfDoubTrace}
If $\Phi_C$ is $\delta_A^{-1/2}$-balanced, then $\varphi_{\mcD}$ is a positive $\delta_A^{-1/2}$-invariant trace. 
\end{Cor}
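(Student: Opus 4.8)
The plan is to specialize Proposition~\ref{PropInvFunct} to the character $g=\delta_A^{-1/2}$ and then show that the modular automorphism $\sigma_{\mcD}$ of \eqref{EqModAut} collapses to the identity. First I would note that $\delta_A^{-1/2}$ is a positive grouplike element of $\msU$: each $\pi_\alpha(\delta_A)^{-1/2}$ is positive and invertible, and since $\delta_A$ is grouplike one has $\Delta(\delta_A^{-1/2})=\delta_A^{-1/2}\otimes\delta_A^{-1/2}$ by functional calculus. Hence, under the hypothesis that $\Phi_C$ is $\delta_A^{-1/2}$-balanced, Theorem~\ref{TheoInvFuncIsChar} and Proposition~\ref{PropInvFunct} apply with $g=\delta_A^{-1/2}$, so $\varphi_{\mcD}$ is already positive and $\delta_A^{-1/2}$-invariant. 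It remains to prove traciality, i.e. $\sigma_{\mcD}=\id$.

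For the $\mcI$-leg, observe that $g\delta_A^{1/2}=1$, so $\kappa_z=\Ad_{(g\delta_A^{1/2})^{-iz}}$ is trivial; thus $\kappa=\id$ (equivalently $\mu=1$ and $\psi_{\mcI}$ is itself a trace, recovering the corollary on $\delta_A^{-1/2}$-invariant integrals in Section~\ref{SubSecInvFunct}). For the $B$-leg, I must check that $\sigma_B(g^{-1}\rhd b)=\sigma_B(\delta_A^{1/2}\rhd b)=b$ for all $b\in B$. Writing a general element of $B$ as $b=\pi(\Phi_C\xi,\eta)$ and using that $x\rhd\pi(\xi',\eta')=\pi(\xi',\pi(x)\eta')$, one gets $\delta_A^{1/2}\rhd b=\pi(\Phi_C\xi,\delta_A^{1/2}\eta)$. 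Applying \eqref{EqModAutB} together with the identity $\delta_B^{-1/2}=\delta_A^{-1/4}R(\Phi_C)\delta_A^{-1/4}$ from Section~\ref{SubSecInvFunct}, this becomes $\pi\!\left(\delta_B^{-1/2}\Phi_C\xi,\eta\right)=\pi\!\left(\delta_B^{-1/2}\xi,\eta\right)$. Finally, $\delta_A^{-1/2}$-balancedness of $\Phi_C$ is equivalent to $\Phi_C\delta_A^{1/2}\Phi_C=\Phi_C$, i.e. $\delta_B^{1/2}=\Phi_C$ and hence $\delta_B^{-1/2}=\Phi_C$ inside $\Phi_C\msU\Phi_C$; therefore $\delta_B^{-1/2}\Phi_C=\Phi_C$ and $\sigma_B(\delta_A^{1/2}\rhd b)=\pi(\Phi_C\xi,\eta)=b$.

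Putting these two computations together, $\sigma_{\mcD}(yb)=\kappa(y)\sigma_B(g^{-1}\rhd b)=yb$ for all $y\in\mcI$, $b\in B$, so $\sigma_{\mcD}=\id$ and $\varphi_{\mcD}$ is a $\delta_A^{-1/2}$-invariant trace. The only point that needs a little care is the $B$-leg computation: one must correctly translate the $\rhd$-action into matrix coefficients and keep track of the separate powers of $\delta_A$ occurring in \eqref{EqModAutB} against the definition of $\delta_B^{\pm 1/2}$. Everything else is a direct substitution, and positivity together with $\delta_A^{-1/2}$-invariance are simply inherited from Proposition~\ref{PropInvFunct}.
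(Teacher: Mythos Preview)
Your proof is correct and follows exactly the same route as the paper: reduce to showing $\sigma_{\mcD}=\id$ via Proposition~\ref{PropInvFunct}, then argue $\kappa=\id$ from $g\delta_A^{1/2}=1$ and $\sigma_B(\delta_A^{1/2}\rhd b)=b$ from $\delta_B=\Phi_C$. The paper's version is terser (it simply states $\kappa=\id$ and $\delta_B=1$), while you have expanded the $B$-leg verification in matrix-coefficient form; nothing differs in substance.
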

\begin{proof}
We only need to check that $\varphi_{\mcD}$ is tracial. But since $\psi_{\mcI}$ is tracial in this case, $\kappa = \id$. Moreover, $\sigma_B(\delta_A^{1/2}\rhd b)) = b$ for $b\in B$ since $\delta_B = 1$. The result follows. 
\end{proof}

Let now $L^2(B)$ be the GNS-space for $B$ with respect to $\Phi_A$, so $L^2(B)$ is the completion of $B$ with respect to the inner product $\langle b,c\rangle =\Phi_A(b^*c)$. Write $\Lambda_B:B\rightarrow L^2(B)$ for the GNS map. Similarly, let $L^2(\mcI)$ be the GNS-space for $\mcI$ with respect to $\psi_{\mcI}$, with GNS-map $\Lambda_{\mcI}$. 

\begin{Theorem}
There exists a unique non-degenerate $*$-representation
\[
\pi_{\reg}: \mcD(B,\mcI) \rightarrow B(L^2(B)\otimes L^2(\mcI))
\]
such that for $b,c\in B$ and $x,y\in \mcI$
\begin{equation}\label{EqFormRegRep}
\begin{split}
\pi_{\reg}(c)(\Lambda_B(b)\otimes \Lambda_{\mcI}(x)) &=\Lambda_B(cb)\otimes \Lambda_{\mcI}(x)  ,\\\pi_{\reg}(y)(\Lambda_B(b)\otimes \Lambda_{\mcI}(x)) &=   \Lambda_B(b_{(1)})\otimes \Lambda_{\mcI}((y\lhd b_{(2)})x). 
\end{split}
\end{equation}
\end{Theorem}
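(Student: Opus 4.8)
The plan is to construct $\pi_{\reg}$ as the GNS representation of $\mcD(B,\mcI)$ associated to the positive functional $\varphi_{\mcD}$ of Proposition \ref{PropInvFunct}, and then to identify the resulting Hilbert space with $L^2(B)\otimes L^2(\mcI)$ via the linear isomorphism $\mcI\otimes B \to \mcD(B,\mcI)$, $x\otimes b \mapsto xb$. Concretely, from \eqref{EqvarphiProd} we have $\varphi_{\mcD}((cy)^*(bx)) = \psi_{\mcI}(y^*x)\Phi_A(c^*b)$, so the GNS inner product on $\mcD(B,\mcI)$ factors exactly as the tensor product inner product under the identification $\Lambda_{\varphi_{\mcD}}(xb) \leftrightarrow \Lambda_B(b)\otimes \Lambda_{\mcI}(x)$. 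In particular the GNS map is injective (no null vectors beyond what $\Phi_A$ and $\psi_{\mcI}$ already kill, which is nothing since both are faithful on $B$ and $\mcI$ respectively), and $L^2(B)\otimes L^2(\mcI)$ is precisely the completion. The left regular representation $\pi_{\varphi_{\mcD}}$ of $\mcD(B,\mcI)$ on this space is then automatically a $*$-representation, and it is bounded by Lemma \ref{LemRepIsBound}, hence extends to the Hilbert space completion; non-degeneracy follows because $\mcD(B,\mcI)$ is an idempotent algebra (it is generated by $\mcI$, which has local units, together with the unital algebra $B$) so $\mcD(B,\mcI)\cdot \mcD(B,\mcI) = \mcD(B,\mcI)$ and the cyclic vectors exhaust a dense subspace.

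The substance of the proof is then to verify that left multiplication translates into the two formulas in \eqref{EqFormRegRep}. For $c\in B$ this is immediate: $c\cdot(xb) = x(c\rhd\text{-nothing})$ — more precisely one must move $c$ past $x$. Using the commutation relation $by = (y\lhd S_A^{-1}(b_{(2)}))b_{(1)}$ rewritten for $c\in B$, $x\in\mcI$, we get $cx = (x\lhd S_A^{-1}(c_{(2)}))c_{(1)}$, so $c\cdot(xb)$ is not simply $x(cb)$. So the clean statement in the first line of \eqref{EqFormRegRep} must really be read as: represent a general vector as $\Lambda_B(b)\otimes\Lambda_{\mcI}(x)\leftrightarrow \Lambda_{\varphi_{\mcD}}(xb)$, and check $\pi_{\varphi_{\mcD}}(c)\Lambda_{\varphi_{\mcD}}(xb) = \Lambda_{\varphi_{\mcD}}(cxb) = \Lambda_{\varphi_{\mcD}}((x\lhd S_A^{-1}(c_{(2)}))c_{(1)}b)$; one then has to see that, after summing, this agrees with $\Lambda_B(cb)\otimes\Lambda_{\mcI}(x)$ as an identity of vectors, i.e.\ the apparent discrepancy is absorbed by the inner product structure. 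Actually the cleanest route is to avoid this subtlety entirely: define $\pi_{\reg}$ directly by the two formulas \eqref{EqFormRegRep} on the algebraic tensor product $B\odot\mcI$ (well-defined since these are free modules over the respective factors), check it is an algebra homomorphism using the commutation relations $yb = b_{(1)}(y\lhd b_{(2)})$, and check it is a $*$-representation with respect to $\langle\,,\rangle$ by computing matrix coefficients against $\varphi_{\mcD}$, i.e.\ $\langle \pi_{\reg}(d)(\Lambda_B(b)\otimes\Lambda_{\mcI}(x)), \Lambda_B(c)\otimes\Lambda_{\mcI}(z)\rangle = \varphi_{\mcD}((cz)^* d (xb)\cdot\text{--})$; then boundedness and extension are as above, and uniqueness is clear because $B\odot\mcI$ is dense and the formulas pin down the action of generators.

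The one genuine verification is that $\pi_{\reg}(y)$ as defined by the second line of \eqref{EqFormRegRep} is multiplicative in $y$ and intertwines the $B$-action correctly — i.e.\ that $\pi_{\reg}(y)\pi_{\reg}(c) = \pi_{\reg}(yc)$ where $yc = c_{(1)}(y\lhd c_{(2)})$ in $\mcD(B,\mcI)$. Spelling this out: $\pi_{\reg}(y)\pi_{\reg}(c)(\Lambda_B(b)\otimes\Lambda_{\mcI}(x)) = \pi_{\reg}(y)(\Lambda_B(cb)\otimes\Lambda_{\mcI}(x)) = \Lambda_B(c_{(1)}b_{(1)})\otimes\Lambda_{\mcI}((y\lhd c_{(2)}b_{(2)})x)$, and this should equal $\pi_{\reg}(c_{(1)})\pi_{\reg}(y\lhd c_{(2)})(\Lambda_B(b)\otimes\Lambda_{\mcI}(x)) = \Lambda_B(c_{(1)}b_{(1)})\otimes\Lambda_{\mcI}(((y\lhd c_{(2)})\lhd b_{(2)})x)$, which matches by associativity of the right $A$-action $(y\lhd c_{(2)})\lhd b_{(2)} = y\lhd(c_{(2)}b_{(2)})$ and coassociativity. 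Similarly one checks $\pi_{\reg}$ is multiplicative on $\mcI$ alone (trivial, it's just left multiplication in $\mcI$ after the coproduct) and on $B$ alone (trivial). The $*$-property reduces, via \eqref{EqvarphiProd}, to the modularity/invariance already recorded in Proposition \ref{PropInvFunct} together with the KMS-type relation between $\psi_{\mcI}$ and $\kappa$. The main obstacle, such as it is, is purely bookkeeping: keeping the Sweedler legs of $b$ straight while simultaneously applying the coproduct on $B$ and the right $A$-module structure on $\mcI$, and confirming that the $\delta$-type modular corrections cancel so that the two prescribed formulas are mutually consistent and isometric; there is no conceptual difficulty beyond what Proposition \ref{PropInvFunct} and Lemma \ref{LemRepIsBound} already provide.
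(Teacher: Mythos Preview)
Your overall strategy --- realize $\pi_{\reg}$ as the GNS representation of $\varphi_{\mcD}$, identify the GNS space with $L^2(B)\otimes L^2(\mcI)$, and invoke Lemma \ref{LemRepIsBound} for boundedness --- is exactly the paper's approach. The detour you take is entirely caused by choosing the wrong ordering in the identification. You write $\Lambda_{\varphi_{\mcD}}(xb)\leftrightarrow \Lambda_B(b)\otimes\Lambda_{\mcI}(x)$, but \eqref{EqvarphiProd} is stated for products $bx$, not $xb$; and with the ordering $xb$ the inner product does \emph{not} factor as a tensor product: a direct computation gives
\[
\varphi_{\mcD}\big((x'b')^*xb\big)=\psi_{\mcI}(x'^*x)\,\Phi_A\big((g^{-1}\rhd b'^*)\,b\big),
\]
which carries an unwanted $g$-twist. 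The paper instead sets $\Lambda_{\mcD}(bx)=\Lambda_B(b)\otimes\Lambda_{\mcI}(x)$, and then \eqref{EqvarphiProd} literally reads $\langle\Lambda_{\mcD}(cy),\Lambda_{\mcD}(bx)\rangle=\varphi_{\mcD}((cy)^*bx)$. With this choice the two formulas in \eqref{EqFormRegRep} are the one-line identities
\[
c\cdot(bx)=(cb)x,\qquad y\cdot(bx)=(yb)x=b_{(1)}(y\lhd b_{(2)})x,
\]
so the ``subtlety'' you spend a paragraph on never arises: the $*$-property and multiplicativity come for free from GNS, and no separate bookkeeping of Sweedler legs or modular corrections is required. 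Your fallback route of defining $\pi_{\reg}$ by the formulas and verifying everything by hand could be made to work, but it is considerably more labor and your sketch of the $*$-compatibility check still implicitly relies on the identification you got wrong.
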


\begin{proof}
Consider the map 
\[
\Lambda_{\mcD}: \mcD(B,\mcI) \rightarrow L^2(B)\otimes L^2(\mcI),\qquad bx \mapsto \Lambda_B(b)\otimes \Lambda_{\mcI}(x). 
\]
Then \eqref{EqvarphiProd} shows $\langle \Lambda_{\mcD}(x),\Lambda_{\mcD}(y)\rangle = \varphi_{\mcD}(x^*y)$. It follows immediately that we can define a non-degenerate $*$-representation of $\mcD(B,\mcI)$  on $\Lambda_B(B)\otimes \Lambda_{\mcI}(\mcI)$ such that $x\Lambda_{\mcD}(y) = \Lambda_{\mcD}(xy)$. This extends to a bounded $*$-representation on $L^2(B)\otimes L^2(\mcI)$ by Lemma \ref{LemRepIsBound}. An easy computation shows that the formulas \eqref{EqFormRegRep} hold. 
\end{proof}

\begin{Def}\label{DefRegRep}
We call $\pi_{\reg}$ the \emph{regular $*$-representation} of $\mcD(B,\mcI)$ (with respect to $g$).
\end{Def}

\subsection{Monoidal structure for $\Rep(\mcD(B,I))$ and categorical interpretation}

Let $A$ be a CQG Hopf $*$-algebra with dual $\mcU \subseteq \msU$. Let $B \subseteq A$ be a right coideal $*$-subalgebra with stabilizer dual $\mcI \subseteq \msI \subseteq \msU$. The goal of this section is to look for a natural tensor product for $*$-representations of $\mcD(B,\mcI)$. We thank A. Brochier, D. Jordan and D. Nikshych for discussions on this subject.

We motivate first the existence of a monoidal structure from purely algebraic considerations. Assume in fact that $A$ is a general bialgebra, and $B \subseteq A$ a right coideal subalgebra. Write $C = A/AB_+$ for the associated left $A$-module coalgebra with coproduct $\Delta_C$ and projection map 
\[
\pi_C: A \rightarrow C,\qquad a \mapsto [a],
\] 
where we use the bracket-notation also for more general quotient maps in what follows.  

We can then consider the categories 
\begin{equation}\label{EqCats}
{}_B\msM^C,\qquad {}_A^C\msM^C,\qquad {}_B\msM_B^A
\end{equation}
of (bi)modules relative to the comodule structure(s). For example, ${}_B\msM^C$ consists of \emph{Doi-Koppinen modules} \cite{Doi92,Kop95}, which are left $B$-modules $M$ with a right $C$-comodule $\delta_M$ structure such that 
\[
\delta_M(bm) = \Delta(b)\delta_M(m),\qquad b\in B,m\in M.
\]
One calls ${}_A^C \msM^C$ the category of $A$-relative $C$-bicomodules, and ${}_B\msM_B^A$ the category of $A$-relative $B$-bimodules. 

There is a natural adjoint functor pair
\begin{equation}\label{EqAdjPair1}
F_B: {}_B\msM^C \rightarrow {}_A^C\msM^C,\quad V \mapsto {}^{\bullet}_{\bullet}A^{\bullet}\underset{B}{\otimes} V^{\bullet}\quad \dashv \quad  G_B:{}_A^C\msM^C \rightarrow {}_B\msM^C ,\quad M \mapsto {}^{\coinv_C}\!\!\!{}_{\bullet}M^{\bullet},  
\end{equation}
where the bullets indicate the natural places where the (co)algebras (co)act. The unit/counit adjunction maps are given by
\begin{equation}\label{EqAdjPairUn}
\varepsilon_M^{B}:  A\underset{B}{\otimes} \left({}^{\coinv_C}M\right)  \rightarrow M,\quad a\otimes m \mapsto am,\qquad 
\eta_V^{B}: V \rightarrow {}^{\coinv_C}(A\underset{B}{\otimes} V),\quad v \mapsto 1\otimes v.
\end{equation}
Similarly, with  $\overset{C}{\square}$ denoting the cotensor product, there is an adjoint functor pair
\begin{equation}\label{EqAdjPair2}
F_C: {}_B\msM_B^A\rightarrow {}_B\msM^C,\quad M \mapsto M/MB_+\quad \dashv\quad 
G_C: {}_B\msM^C \rightarrow {}_B\msM_B^A,\quad V \mapsto {}_{\bullet}V \overset{C}{\square} {}_{\bullet}A_{\bullet}^{\bullet},
\end{equation}
with unit/counit adjunction maps
\begin{multline}\label{EqAdjPairUn2}
\varepsilon_V^C: (V\overset{C}{\square} A)/(V\overset{C}{\square} A)B_+ \rightarrow V\quad [\sum_i v_i \otimes a_i] \mapsto \sum_i \varepsilon(a_i)v_i,\\ \eta_M^C: M \rightarrow (M/MB_+)\overset{C}{\square} A,\quad m \mapsto [m_{(0)}]\otimes m_{(1)}. 
\end{multline}

Moreover, ${}_A^C\msM^C$ and ${}_B\msM_B^A$ are monoidal through respectively the cotensor product $\overset{C}{\square}$ with the diagonal module structure and $\underset{B}{\otimes}$ with the diagonal comodule structure, and the composite adjunction functors $F_B\circ F_C$ and $G_C\circ G_B$ are resp.\ oplax/lax monoidal through the maps
\begin{multline}\label{EqOpLax}
A \underset{B}{\otimes} \left(M\underset{B}{\otimes} N / (M\underset{B}{\otimes} N)B_+\right) \rightarrow (A\underset{B}{\otimes} M/MB_+)\overset{C}{\square} (A\underset{B}{\otimes} N/NB_+),\\ a \otimes [m\otimes n] \rightarrow (a_{(1)} \otimes [m_{(0)}]) \otimes (a_{(2)}m_{(1)} \otimes [n]),
\end{multline} 
\begin{multline}\label{EqLax}
(({}^{\coinv_C}M)\overset{C}{\square} A) \underset{B}{\otimes} ({}^{\coinv_C}N)\overset{C}{\square} A)  \rightarrow {}^{\coinv_C}(M \overset{C}{\square} N) \overset{C}{\square} A,\\ (\sum_i m_i\otimes a_i) \otimes (\sum_j n_j\otimes a_j') \mapsto\sum_j (\sum_{i} m_i\otimes a_{i(1)}n_j) \otimes a_{i(2)}a_j'. 
\end{multline}

Assume now that $A$ is a Hopf algebra, with ${}_BA$ faithfully flat and $A^C$ faithfully coflat. Then it follows from \cite[Theorem 1 and Theorem 2]{Tak79} that the above adjoint pairs are equivalences,
\begin{equation}\label{EqEquiv} 
{}_B\msM_B^A \cong {}_B\msM^C\cong {}_A^C\msM^C.
\end{equation} 
Since \eqref{EqOpLax} and \eqref{EqLax} are easily checked to correspond to each other under the adjunction, it then follows automatically that the ensuing equivalence between ${}_B\msM_B^A$ and ${}_A^C\msM^C$ is in fact a strong monoidal equivalence. 

Assume now again that $A$ is a CQG Hopf $*$-algebra and $B$ a right coideal $*$-subalgebra. Then indeed ${}_BA$ is faithfully flat \cite[Corollary 3.5]{Chi18} and $A^C$ is faithfully coflat (as $C$ is cosemisimple). There is then a one-to-one correspondence between Doi-Kopinnen modules and non-degenerate $\mcD(B,\mcI)$-modules through the correspondence 
\[
{}_B\msM^C \ni V \mapsto V \in {}_{\mcD(B,\mcI)}\msM, \qquad (b\omega) v = \tau(v_{(-1)},\omega)bv_{(0)},\qquad \omega \in \mcI,b\in B,v\in V. 
\]
It follows that the category of $\mcD(B,\mcI)$-modules obtains a uniquely defined monoidal structure through either its identification with  ${}_B\msM_B^A$ or ${}_A^C\msM^C$. 

Consider now however the C$^*$-category $\Rep(\mcD(B,\mcI))$ of non-degenerate $*$-representations of $\mcD(B,\mcI)$ on Hilbert spaces. We have a C$^*$-equivalence of this category with $\Rep_0(\mcD(B,\mcI))$, the C$^*$-category of non-degenerate $\mcD(B,\mcI)$-modules with a pre-Hilbert space structure making the module structure into a $*$-representation. Indeed, one easily checks that 
\[
\Rep(\mcD(B,\mcI)) \rightarrow \Rep_0(\mcD(B,\mcI)),\qquad \Hsp \mapsto \mcI\Hsp 
\] 
does the job, the quasi-inverse being given by completing a pre-Hilbert space into a Hilbert space (by Lemma \ref{LemRepIsBound}, we can extend the $*$-representation of $\mcD(B,\mcI)$ to this completion).

Hence, to obtain a monoidal structure on $\Rep(\mcD(B,\mcI))$, we only need to see how the Hilbert space structure can be transported via the equivalences in \eqref{EqEquiv}. Unfortunately, this seems not as straightforward as one would hope, and one  quickly runs into trouble with either questions concerning which C$^*$-completions to use, or how to invoke the appropriate modular structure to preserve the compatibility with the $*$-operation. In what follows, we will for this reason abandon the ${}_B\msM_B^A$-picture, and focus on the ${}_A^C\msM^C$-side, considered however from a dual point of view. 

Consider the von Neumann algebras $\msU_{\infty} \subseteq \msU$ and $\msI_{\infty}=\msI\cap \msU_{\infty}$ of uniformly bounded elements in the direct products \eqref{EqLinDual} and \eqref{EqIDirProd}. We then obtain normal maps
\[
\Delta: \msU_{\infty}\rightarrow \msU_{\infty}\overline{\otimes} \msU_{\infty},\qquad \Delta: \msI_{\infty}\rightarrow \msU_{\infty}\overline{\otimes} \msI_{\infty},
\]
where $\overline{\otimes}$ denotes the von Neumann algebraic tensor product (which just corresponds to bounded sequences in the algebraic completed product \eqref{EqAlgComplProd}). Hence $\msI_{\infty}$ can be seen as a left coideal von Neumann subalgebra of $\msU_{\infty}$. 

\begin{Def}
A right $W^*$-Hilbert $\msI_{\infty}$-bimodule $(\msE,\langle -,-\rangle_{\msI_{\infty}})$ will be called \emph{$\msU_{\infty}$-relative} if it is equipped with a normal comodule map $\Delta: \msE \rightarrow \msU_{\infty}\overline{\otimes} \msE$ such that 
\begin{equation}\label{EqIdEqInProd}
\Delta(x\xi y) = \Delta(x)\Delta(\xi)\Delta(y),\qquad \Delta(\langle \xi,\eta\rangle_{\msI_{\infty}}) = \langle \Delta(\xi),\Delta(\eta)\rangle_{\msU_{\infty}\overline{\otimes} \msI_{\infty}}, 
\end{equation}
where the latter $W^*$-valued inner product is determined by $\langle x\otimes \xi,y\otimes \eta\rangle_{\msU_{\infty}\overline{\otimes} \msI_{\infty}} = x^*y \otimes \langle \xi,\eta\rangle_{\msI_{\infty}}$. 
\end{Def}
Note that the equivariance conditions are meaningful as $\msI_{\infty}$ is a left coideal von Neumann subalgebra of $\msU_{\infty}$. Whenever convenient, we continue to use the Sweedler notation 
\[
\Delta(\xi) = \xi_{[-1]}\otimes \xi_{[0]}.
\]
We denote by ${}^{\msU_{\infty}}_{\msI_{\infty}}\msM_{\msI_{\infty}}$ the C$^*$-category of $\msU_{\infty}$-relative right W$^*$-Hilbert $\msI_{\infty}$-bimodules (with the obvious choice for morphisms). Note also for future reference that a right W$^*$-Hilbert $\msI_{\infty}$-module $\mcE$ can always be written in the form $\mcE\cong \Prod_{\beta}^b B(\Gsp_{\beta},V_{\beta})$ for Hilbert spaces $V_{\beta}$, where we take direct products which are uniformly bounded  in the operator norm. The inner product is then simply $\langle (x_{\beta})_{\beta},(y_{\beta})_{\beta}\rangle_{\msI_{\infty}} = (x_{\beta}^*y_{\beta})_{\beta} \in \msI_{\infty} = \Prod_{\beta}^b B(\Gsp_{\beta})$. 

If $\mcE$ is a $\msU_{\infty}$-relative right $W^*$-Hilbert $\msI_{\infty}$-bimodule, we can equip it with a left $A$-module structure by putting 
\[
a \blacktriangleright \xi= \tau(S_A^{-1}(a),\xi_{[-1]})\xi_{[0]},
\]
which is well-defined since $\tau(a,-)$ is a normal functional on $\msU_{\infty}$ for $a\in A$. Similarly, we equip $\msU_{\infty}$ with the left and right $A$-module 
\[
\tau(c,a\blacktriangleright x) = \tau(S_A^{-1}(a)c,x),\qquad \tau(c,x\blacktriangleleft a)= \tau(cS_A^{-1}(a),x),\qquad x\in \msU_{\infty},a,c\in A.
\]
Recall further the map $S_B^2$ introduced in \eqref{EqAntB}. 

\begin{Theorem}\label{TheoEquiCats}
There is an equivalence of C$^*$-categories
\[
\Rep(\mcD(B,\mcI)) \cong {}_{\msI_{\infty}}^{\msU_{\infty}}\msM_{\msI_{\infty}}.
\]
More precisely, let $\mcE \in {}_{\msI_{\infty}}^{\msU_{\infty}}\msM_{\msI_{\infty}}$, and consider $\Hsp = \mcE\Phi_C$ with the scalar product
\begin{equation}\label{EqHilbStruc}
\langle \xi,\eta\rangle = \tau(1_C,\langle\xi,\eta\rangle_{\msI_{\infty}}),\qquad \xi,\eta\in \Hsp. 
\end{equation}
Then $\Hsp$ is a Hilbert space with a non-degenerate $*$-representation of $\mcD(B,\mcI)$ through 
\[
\pi_{\Hsp}(b)\xi = S_A^2(S_B^{-2}(b))\blacktriangleright \xi,\qquad \pi_{\Hsp}(x)\xi = x\xi,\qquad b\in B,x\in \mcI,
\]
and the association $F_{\infty}: \mcE \mapsto \Hsp$ becomes an equivalence of tensor C$^*$-categories 
\[
F_{\infty}: {}_{\msI_{\infty}}^{\msU_{\infty}}\msM_{\msI_{\infty}} \rightarrow \Rep(\mcD(B,\mcI)).
\]
A quasi-inverse functor is given by 
\[
G_{\infty}: \Rep(\mcD(B,\mcI)) \rightarrow {}_{\msI_{\infty}}^{\msU_{\infty}}\msM_{\msI_{\infty}},\qquad \Hsp \mapsto \msU_{\infty}\overline{\square} \Hsp,
\]
where
\[ 
\msU_{\infty}\overline{\square} \Hsp = \{z \in \msU_{\infty} \overline{\otimes} \Hsp \mid \forall b\in B, (1\otimes \pi_{\Hsp}(b))z = z(\blacktriangleleft b \otimes \id)\}
\]
with the $\msU_{\infty}$-relative $\msI_{\infty}$-bimodule determined by ${}_{\bullet}^{\bullet}(\msU_{\infty})_{\bullet} \overline{\otimes} {}_{\bullet}\Hsp$ and with the Hilbert $\msI_{\infty}$-valued inner product inherited from the $\msU_{\infty}$-valued one on $\msU_{\infty} \overline{\otimes} \Hsp$ (i.e.\ the $\msU_{\infty}$-valued inner product on $\msU_{\infty}\overline{\square} \Hsp$ lands in $\msI_{\infty}$).

The adjoint equivalence between $F_{\infty}$ and $G_{\infty}$ is established through the unitary maps 
\[
\epsilon_{\Hsp} = \epsilon: (\msU_{\infty}\overline{\square} \Hsp)\Phi_C \rightarrow \Hsp,\qquad z \mapsto (\tau(1_A,-)\otimes \id)z
\]
and
\[
\eta_{\mcE}= \eta: \mcE \rightarrow \msU_{\infty} \overline{\square} (\mcE \Phi_C),\qquad \xi \mapsto \Delta(\xi)(1\otimes\Phi_C). 
\]
\end{Theorem}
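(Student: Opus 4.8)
The plan is to verify the three ingredients in order: (i) $\Hsp = \mcE\Phi_C$ is a Hilbert space carrying a non-degenerate $*$-representation of $\mcD(B,\mcI)$; (ii) $G_\infty$ lands in ${}^{\msU_\infty}_{\msI_\infty}\msM_{\msI_\infty}$ and $\epsilon,\eta$ are well-defined unitaries that are mutually inverse (so $F_\infty$ is an equivalence); (iii) $F_\infty$ is monoidal. For step (i), positivity of $\langle-,-\rangle$ on $\Hsp$ follows from $\tau(1_C,-)=\varepsilon_{\mid\msI}$ being a state on $\msI_\infty$ and the $W^*$-valued inner product being positive; non-degeneracy of the inner product uses that $\tau(1_C,-)$ is faithful on $\Phi_C\msI_\infty\Phi_C$ (since $(\Phi_C)_\alpha$ projects onto fixed vectors and $\varepsilon_{\mid\msI}$ is the support projection of that representation). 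That $x\mapsto x\xi$ gives a $*$-representation of $\mcI$ on $\Hsp$ is immediate from \eqref{EqIdEqInProd}; for $B$ one checks $\pi_\Hsp(b)$ preserves $\mcE\Phi_C$ (using $S_A^2S_B^{-2}(b)\in A$ and the fact that the left $A$-action $\blacktriangleright$ built from $\Delta$ commutes appropriately with right multiplication by $\Phi_C$ via Lemma~\ref{LemSSS}), that $\pi_\Hsp(b)^*=\pi_\Hsp(b^*)$ via the $g$-independent piece of the computation together with \eqref{EqFormStar} and $R$, and finally that the commutation relation $\pi_\Hsp(y)\pi_\Hsp(b) = \pi_\Hsp(b_{(1)})\pi_\Hsp(y\lhd b_{(2)})$ of $\mcD(B,\mcI)$ holds — this reduces to the module-coalgebra compatibility $\Delta(x\xi)=\Delta(x)\Delta(\xi)$ after translating $\blacktriangleright$ back into $\Delta$. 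Non-degeneracy of $\pi_\Hsp$ follows since $\mcI\Hsp = \mcI\mcE\Phi_C = \mcE\Phi_C$ because $\mcI = \oplus_\beta\End(\Gsp_\beta)$ acts non-degenerately on each fiber.

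For step (ii), one first checks $\msU_\infty\overline{\square}\Hsp$ is a $\msU_\infty$-relative right $W^*$-Hilbert $\msI_\infty$-bimodule: the left $\msI_\infty$- and $\msU_\infty$-comodule structures are inherited from the first leg of $\msU_\infty\overline{\otimes}\Hsp$, while the key point is that the $\msU_\infty$-valued inner product $\langle z,z'\rangle = (\text{mult on first leg})(z^*\otimes z')$ of two elements of $\msU_\infty\overline{\square}\Hsp$ actually lies in $\msI_\infty$; this follows because the defining relation $(1\otimes\pi_\Hsp(b))z = z(\blacktriangleleft b\otimes\id)$ forces $\langle z,z'\rangle$ to be invariant under the relevant $B$-action, and $\msI = B^\perp$ is exactly the $B$-invariants in $\msU$. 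For the unit and counit maps: $\epsilon$ is well-defined because $z\in\msU_\infty\overline{\square}\Hsp$ satisfies the $B$-equivariance that makes $(\tau(1_A,-)\otimes\id)z$ land in $\Hsp=\mcE\Phi_C$ (using $\tau(1_A,-)=\varepsilon$ on $\msU$ and the relation $\varepsilon\blacktriangleleft b = \varepsilon(b)\varepsilon$); $\eta(\xi)=\Delta(\xi)(1\otimes\Phi_C)$ lies in $\msU_\infty\overline{\square}(\mcE\Phi_C)$ by the comodule axiom and Lemma~\ref{LemSSS}. That $\epsilon,\eta$ are isometric reduces to the identity $\tau(1_C,\langle\xi,\eta\rangle_{\msI_\infty}) = \langle\xi,\eta\rangle$ combined with the cotensor/coinvariance bookkeeping, and that they are mutually inverse is the Takeuchi-type equivalence ${}_B\msM^C\cong{}^C_A\msM^C$ of \cite{Tak79} transported to the dual/von Neumann picture — here faithful flatness of ${}_BA$ \cite[Corollary 3.5]{Chi18} and cosemisimplicity of $C$ guarantee the algebraic equivalence, and normality/uniform boundedness let one pass to the $W^*$-completions. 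Surjectivity of $\epsilon$ (equivalently, that $\mcE\Phi_C$ generates $\mcE$ as a $\msU_\infty$-relative bimodule) is where one genuinely uses that $\msI_\infty$-bimodules decompose as $\Prod^b_\beta B(\Gsp_\beta,V_\beta)$ and that every admissible $\msI$-representation sits inside the restriction of a $\msU$-representation (the argument in Lemma~\ref{LemEqPhiC}).

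For step (iii), the monoidal structure on ${}^{\msU_\infty}_{\msI_\infty}\msM_{\msI_\infty}$ is the relative cotensor product $\mcE\overset{\msI_\infty}{\square}\mcF$ inside $\msU_\infty\overline{\otimes}(\cdots)$ with diagonal $\msU_\infty$-coaction, mirroring \eqref{EqEquiv} on the dual side; on $\Rep(\mcD(B,\mcI))$ it is the tensor product coming from the coproduct $\Delta:\mcI\to\msU_\infty\overline{\otimes}\msI_\infty$ together with $\Delta:B\to B\otimes A$ acting diagonally (via the identification of $\mcD(B,\mcI)$-modules with Doi–Koppinen ${}_B\msM^C$-modules, which is monoidal as explained in the text preceding the theorem). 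One then exhibits the natural isometry $F_\infty(\mcE)\otimes F_\infty(\mcF)\xrightarrow{\sim}F_\infty(\mcE\overset{\msI_\infty}{\square}\mcF)$, $\xi\otimes\eta\mapsto$ (the image of $\Delta(\xi)\cdot\eta$ in the cotensor), checks it intertwines the $\mcD(B,\mcI)$-actions — the $\mcI$-part is the definition of the coproduct and the $B$-part uses that $\pi_\Hsp(b)$ was defined through $S_A^2S_B^{-2}$ precisely so that the twist matches the diagonal $A$-comodule structure — and checks associativity and unitality (the unit object is $\msI_\infty$ itself, with $F_\infty(\msI_\infty) = \msI_\infty\Phi_C = \Phi_C\msI_\infty\Phi_C$ mapping to the trivial representation $\C_\varepsilon$). \emph{The main obstacle} is step (ii), specifically proving that $\epsilon$ and $\eta$ are genuinely inverse unitaries in the $W^*$-setting: one must show the algebraic Takeuchi equivalence survives the passage to uniformly bounded direct products and von Neumann tensor products, i.e.\ that taking $C$-coinvariants commutes with the relevant completions and that $\mcE\Phi_C$ is "big enough" to recover all of $\mcE$ — this is where faithful flatness of ${}_BA$ and the concrete matrix-block description of $\msI_\infty$ and of $\msI_\infty$-bimodules do the real work, and where one has to be careful that no closure/completion issues obstruct the coinvariance argument.
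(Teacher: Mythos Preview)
Your outline for steps (i) and the well-definedness parts of (ii) is broadly in line with the paper, but there is a genuine gap at the heart of step (ii): the surjectivity of $\epsilon$ and $\eta$. You propose to obtain these by ``transporting the Takeuchi equivalence to the dual/von Neumann picture'' and then asserting that faithful flatness of ${}_BA$ plus the matrix-block description of $\msI_\infty$-bimodules ``do the real work.'' This is exactly the route the paper explicitly warns against: as noted just before the theorem, passing the algebraic equivalence \eqref{EqEquiv} to the C$^*$/W$^*$-setting ``seems not as straightforward as one would hope,'' because of completion and modular-compatibility issues. You do not actually supply an argument that Takeuchi's coinvariance bijection survives the passage to $\msU_\infty\overline{\otimes}\Hsp$, and the block description of $\msI_\infty$-bimodules alone does not give you control over the $B$-equivariance condition defining $\msU_\infty\overline{\square}\Hsp$. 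The paper's key idea for surjectivity of $\epsilon$ is instead a \emph{direct, explicit} section: one defines $\delta:\Hsp\to\msU_\infty\overline{\otimes}\Hsp$ by $\delta(\xi)(a)=\pi_\Hsp(F_B(S_A(a)))\xi$ using the conditional expectation $F_B$ of \eqref{EqCondExp2}, computes $\delta(\xi)^*\delta(\eta)=\langle\xi,\eta\rangle\Phi_C$ (so $\delta(\xi)$ is bounded), observes that left $B$-linearity of $F_B$ puts $\delta(\xi)$ in $\msU_\infty\overline{\square}\Hsp$, and checks $(\varepsilon\otimes\id)\delta(\xi)=\xi$. For surjectivity of $\eta$, the paper does not use Lemma~\ref{LemEqPhiC} as you suggest, but rather takes an orthogonal complement in the W$^*$-Hilbert $\msI_\infty$-module and argues by $\sigma$-weak density of the $\Delta(\xi)(x\otimes\Phi_C)$. (Incidentally, your parenthetical ``equivalently, that $\mcE\Phi_C$ generates $\mcE$'' under surjectivity of $\epsilon$ is a description of surjectivity of $\eta$, not $\epsilon$.)

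Your step (iii) is both unnecessary and incorrect as stated. The tensor product on $\Rep(\mcD(B,\mcI))$ is not a pre-existing structure against which monoidality of $F_\infty$ must be checked: it is \emph{defined} immediately after the theorem by transporting the tensor product of ${}^{\msU_\infty}_{\msI_\infty}\msM_{\msI_\infty}$ along the equivalence, so $F_\infty$ is monoidal by construction. Moreover, the monoidal structure on ${}^{\msU_\infty}_{\msI_\infty}\msM_{\msI_\infty}$ is the \emph{tensor product} $\msE\otimes_{\msI_\infty}\msF$ over the von Neumann algebra $\msI_\infty$ (with diagonal $\msU_\infty$-coaction), not a cotensor product $\msE\overset{\msI_\infty}{\square}\msF$ as you write; the $\msI_\infty$-bimodule structure uses $\msI_\infty$ as an algebra, not a coalgebra.
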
 

The proof of this theorem will be presented in the next section. 

Note that ${}^{\msU_{\infty}}_{\msI_{\infty}}\msM_{\msI_{\infty}}$ is naturally a tensor C$^*$-category by 
\[
\msE \widetilde{\otimes}\msF = \msE \otimes_{\msI_{\infty}} \msF,\qquad \Delta(\xi\otimes \eta) = \xi_{[-1]}\eta_{[-1]}\otimes \xi_{[0]}\otimes \eta_{[0]}. 
\]
There is no difficulty in interpreting the latter expression, since we can simply perform the multiplication separately with respect to each block of $\msU_{\infty}$. 

Using the equivalences $F_{\infty}$ and $G_{\infty}$, it follows in particular that we can endow $\Rep(\mcD(B,\mcI))$ with the structure of a tensor C$^*$-category.
\begin{Def} 
Let $\Gsp,\Hsp \in \Rep(\mcD(B,\mcI))$. The \emph{tensor product} of $\Gsp$ and $\Hsp$ is the Hilbert space
\[
\Gsp \boxtimes \Hsp := (\msU_{\infty} \overline{\square}\Hsp )\underset{\msI_{\infty}}{\overline{\otimes}} \Gsp \cong 
\left((\msU_{\infty} \overline{\square}\Hsp )\underset{\msI_{\infty}}{\overline{\otimes}} (\msU_{\infty} \overline{\square}\Gsp)\right)\Phi_C,
\]
where $\Gsp \boxtimes \Hsp \subseteq (\msU_{\infty} \overline{\otimes}\Hsp )\underset{\msI_{\infty}}{\overline{\otimes}} \Gsp $ is endowed with the $\mcD(B,\mcI)$-representation
\begin{multline*}
\pi_{\Gsp\boxtimes \Hsp}(x) \xi =  ((\id\otimes \pi_{\Hsp})\Delta(x)\otimes \id)\xi,\qquad \pi_{\Gsp\boxtimes \Hsp}(b) \xi =( b_{(2)} \blacktriangleright \otimes \id\otimes \pi_{\Gsp}(b_{(1)})) \xi, \\ x\in \mcI,b\in B,\xi \in \Gsp \boxtimes \Hsp,
\end{multline*}
and where the inner product is determined by 
\[
\langle \xi\otimes \eta, \xi'\otimes \eta' \rangle = \langle \eta, \langle \xi,\xi'\rangle_{\msI_{\infty}}\eta'\rangle,\qquad \xi,\xi'\in\msU_{\infty} \overline{\square}\Hsp,\eta,\eta'\in \Gsp. 
\]
\end{Def}
The extra flip between $\Gsp$ and $\Hsp$ is introduced to accommodate the convention that we take on the coideal $\mcD(B,\mcI)\subseteq \mcD(A,\mcU)$ the coproduct of $B$ but the opposite coproduct of $\mcI$. 

To end this section, we note that there is an intrinsic categorical interpretation of the C$^*$-category ${}_{\msI_{\infty}}^{\msU_{\infty}}\msM_{\msI_{\infty}}$. Let $\Rep(\mcU)$, resp.\ $\Rep(\mcI)$, be the C$^*$-categories of non-degenerate $*$-representations of $\mcU$, resp.\ $\mcI$. Then $\Rep(\mcU)$ is a tensor C$^*$-category, and $\Rep(\mcI)$ is a module C$^*$-category for $\Rep(\mcU)$: 
\[
\otimes:  \Rep(\mcU) \times \Rep(\mcU) \rightarrow \Rep(\mcU),\qquad  \boxtimes: \Rep(\mcU) \times \Rep(\mcI) \rightarrow \Rep(\mcI).
\]
\begin{Rem}
Denoting by $\Rep_{\fin}(\mcU)$, resp.\ $\Rep_{\fin}(\mcI)$ the C$^*$-categories of finite-dimensional non-degenerate $*$-representations, we can naturally identify $\Rep(\mcU)$ and $\Rep(\mcI)$ with the ind-completions of $\Rep_{\fin}(\mcU)$ and $\Rep_{\fin}(\mcI)$, since any non-degenerate $*$-representation of $\mcU$ or $\mcI$ is a direct sum of finite-dimensional $*$-representations, cf.\ \cite{NY16}. 
\end{Rem}

\begin{Theorem}\label{TheoIdentCat}
There is a strong monoidal unitary equivalence of tensor C$^*$-categories
\[
 {}_{\msI_{\infty}}^{\msU_{\infty}}\msM_{\msI_{\infty}}\cong \End_{\Rep(\mcU)}(\Rep(\mcI)).
\] 
\end{Theorem}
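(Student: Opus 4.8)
The plan is to exhibit the equivalence ${}_{\msI_{\infty}}^{\msU_{\infty}}\msM_{\msI_{\infty}}\cong \End_{\Rep(\mcU)}(\Rep(\mcI))$ by producing functors in both directions and checking they are mutually quasi-inverse strong monoidal unitary functors. First I would unwind the right-hand side: an object of $\End_{\Rep(\mcU)}(\Rep(\mcI))$ is a $\Rep(\mcU)$-linear endofunctor $T$ of the module category $\Rep(\mcI)$, that is, $T$ together with coherent natural unitaries $T(V\boxtimes X)\cong V\boxtimes T(X)$ for $V\in\Rep(\mcU)$, $X\in\Rep(\mcI)$. Since $\Rep(\mcI)\cong\mathrm{ind}\text{-}\Rep_{\fin}(\mcI)$ and $\mcI$ is a direct sum of matrix algebras, such a functor is determined by its value on the generating object given by the regular representation, or more concretely by where it sends the ``identity'' object $\bigoplus_{\beta}\Gsp_{\beta}$; this lets me encode $T$ as a single right Hilbert $\msI_{\infty}$-module with compatible extra structure.

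Next I would describe the comparison functor explicitly. Given $\msE\in{}_{\msI_{\infty}}^{\msU_{\infty}}\msM_{\msI_{\infty}}$, define the endofunctor $T_{\msE}$ of $\Rep(\mcI)$ on objects by $T_{\msE}(X)=\msE\otimes_{\msI_{\infty}}X$ (completed tensor product over $\msI_{\infty}$), with $\mcI$-action on the left leg through the left $\msI_{\infty}$-module structure; the $\msU_{\infty}$-comodule structure $\Delta\colon\msE\to\msU_{\infty}\overline{\otimes}\msE$ combined with the module action $\boxtimes\colon\Rep(\mcU)\times\Rep(\mcI)\to\Rep(\mcI)$ supplies the required natural isomorphisms $T_{\msE}(V\boxtimes X)\cong V\boxtimes T_{\msE}(X)$ via $\xi\otimes(v\otimes x)\mapsto \xi_{[-1]}v\otimes(\xi_{[0]}\otimes x)$, and the coassociativity/counit identities in the definition of a $\msU_{\infty}$-relative bimodule translate verbatim into the coherence axioms for a module functor. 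Conversely, given $T\in\End_{\Rep(\mcU)}(\Rep(\mcI))$, I would set $\msE_T := \Hom_{\Rep(\mcI)}(\mathbf{1}_{\Rep(\mcI)}, T(\text{reg}))$ — more precisely, I would realize $\Rep(\mcI)$ concretely as right Hilbert $\msI_{\infty}$-modules (every object being of the form $\Prod_{\beta}^{b}B(\Gsp_{\beta},V_{\beta})$ as noted in the excerpt), evaluate $T$ at the standard generator $\msI_{\infty}$ itself viewed as a module over itself, and take $\msE_T=T(\msI_{\infty})$; this carries an obvious right $\msI_{\infty}$-module and $\msI_{\infty}$-valued inner product, a left $\msI_{\infty}$-action from functoriality applied to left multiplications, and a $\msU_{\infty}$-comodule structure coming precisely from the module-functor constraint $T(V\boxtimes \msI_{\infty})\cong V\boxtimes T(\msI_{\infty})$ evaluated on the regular corepresentation. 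One then checks $\msE_T\in{}_{\msI_{\infty}}^{\msU_{\infty}}\msM_{\msI_{\infty}}$.

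Then I would verify that $\msE\mapsto T_{\msE}$ and $T\mapsto \msE_T$ are mutually quasi-inverse: the composite $\msE\mapsto T_{\msE}\mapsto T_{\msE}(\msI_{\infty})=\msE\otimes_{\msI_{\infty}}\msI_{\infty}\cong\msE$ is naturally isomorphic to the identity (compatibly with all the structure maps), and conversely $T\mapsto \msE_T\mapsto T_{\msE_T}$ with $T_{\msE_T}(X)=T(\msI_{\infty})\otimes_{\msI_{\infty}}X\cong T(\msI_{\infty}\otimes_{\msI_{\infty}}X)\cong T(X)$ naturally in $X$, using that every module in $\Rep(\mcI)$ is (functorially) recovered from $\msI_{\infty}$ by a base-change $\msI_{\infty}\otimes_{\msI_{\infty}}(-)$ and that $T$, being $\mcI$-linear in the appropriate sense, commutes with such colimits. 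Finally I would check that both functors respect the monoidal structures: on the left, $\widetilde{\otimes}=\otimes_{\msI_{\infty}}$ with the diagonal comodule structure; on the right, composition of endofunctors; and the canonical isomorphism $(\msE\otimes_{\msI_{\infty}}\msF)\otimes_{\msI_{\infty}}X\cong\msE\otimes_{\msI_{\infty}}(\msF\otimes_{\msI_{\infty}}X)$ realizes $T_{\msE\widetilde{\otimes}\msF}\cong T_{\msE}\circ T_{\msF}$, with the comodule maps matching up by coassociativity; unitarity and compatibility with adjoints/$*$-structure follow since all constructions are built from $*$-preserving maps and $\msI_{\infty}$-valued inner products.

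\textbf{Main obstacle.} The subtle point is not the algebra of the correspondence but the functional-analytic bookkeeping: one must make sure that the ``identity'' object of the module category $\Rep(\mcI)$ and the base-change description are correctly set up at the von Neumann algebraic level, i.e.\ that $T(\msI_{\infty})$ is again a \emph{$\msU_{\infty}$-relative} right $W^*$-Hilbert $\msI_{\infty}$-bimodule with a \emph{normal} comodule map, and that the equivalence $T_{\msE_T}\cong T$ holds on all of $\Rep(\mcI)$ (not merely on finite-dimensional objects) compatibly with inductive limits — this is where the ind-completion remark and Lemma~\ref{LemRepIsBound}, together with normality of $\Delta$ on $\msU_{\infty}$ and $\msI_{\infty}$, are doing the real work. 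Establishing that the module-functor constraints of a general $T$ are automatically normal/weakly continuous in the relevant leg, so that they descend to a genuine $\msU_{\infty}$-comodule structure on $\msE_T$, is the step I expect to require the most care.
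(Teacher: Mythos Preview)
Your proposal is correct and follows essentially the same route as the paper: both directions use $\msE\mapsto F_{\msE}=\msE\otimes_{\msI_{\infty}}-$, identify the module-functor constraint $u$ with a comodule map $\Delta:\msE\to\msU_{\infty}\overline{\otimes}\msE$ (coherence $\Leftrightarrow$ coassociativity), and deduce monoidality from associativity of $\otimes_{\msI_{\infty}}$. The only noteworthy difference is in how the inverse functor is packaged: the paper takes $\msE_F=\prod_{\beta}^{\bounded}\Hom(\Gsp_{\beta},F(\Gsp_{\beta}))$, evaluating $F$ on the irreducibles (which are genuine objects of $\Rep(\mcI)$) and assembling a bounded product, whereas you evaluate at the ``regular object'' $\msI_{\infty}$; since $\msI_{\infty}$ is not itself a Hilbert space in $\Rep(\mcI)$, your formulation implicitly passes through the equivalence with right Hilbert $\msI_{\infty}$-modules, which is exactly what the paper's description $\prod_{\beta}^{\bounded}B(\Gsp_{\beta},V_{\beta})$ encodes---so the two constructions agree once unwound.
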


The right hand side is to be interpreted as follows: The C$^*$-category $\End_{\Rep(\mcU)}(\Rep(\mcI))$ is the C$^*$-category of couples $(F,u)$ with $F\in \End(\Rep(\mcI))$ a $*$-preserving cocontinuous endofunctor of $\Rep(\mcI)$ and $u$ a unitary natural transformation $-\boxtimes F(-) \cong F(-\boxtimes -)$ satisfying the obvious constraint under tensor products, i.e.\
\[
u_{V\boxtimes W,X} = u_{V,W\boxtimes X}(V\boxtimes u_{W,X}),\qquad V,W \in \Rep(\mcU),X\in \Rep(\mcI).
\] 
Morphisms in this category are  uniformly bounded natural transformations, endowed with their natural norm, and commuting with the associated  unitary natural transformations. 

\begin{proof}
Note that $\Rep(\mcI)$ equals the C$^*$-category of normal $\msI_{\infty}$-representations. It is then easily seen that any cocontinuous $*$-endofunctor $F$ of $\Rep(\mcI)$ must be implemented by a W$^*$-Hilbert $\msI_{\infty}$-bimodule $\msE = \msE_F$: for example one can take $\msE_F = \prod^{\bounded}_{\beta}\Hom(\msG_{\beta},F(\msG_{\beta}))$ if $\mcI = \oplus_{\beta} \End(\msG_{\beta})$. The assigment $\msE \mapsto F_{\msE} = \msE \otimes_{\msI_{\infty}} - $ is then an equivalence of tensor C$^*$-categories between ${}_{\msI_{\infty}}\msM_{\msI_{\infty}}$ and $\End(\Rep(\mcI))$ (the latter still consisting of \emph{cocontinuous} endofunctors). 

Assume now that $(F_{\msE},u) \in \End_{\Rep(\mcU)}(\Rep(\mcI))$. Then noting that $\boxtimes$ is implemented by the W$^*$-Hilbert bimodule ${}_{\Delta} (\msU_{\infty}\overline{\otimes}\msI_{\infty})_{\msU_{\infty}\overline{\otimes} \msI_{\infty}}$, we have that $-\boxtimes F_{\msE}(-)$ and $F_{\msE}(-\boxtimes-)$ are implemented respectively by ${}_{\Delta} (\msU_{\infty}\overline{\otimes}\msE_{\infty})_{\msU_{\infty}\overline{\otimes} \msI_{\infty}}$ and $\msE \otimes_{\msI} {}_{\Delta}(\msU_{\infty}\overline{\otimes} \msI_{\infty})_{\msU_{\infty}\overline{\otimes} \msI_{\infty}}$. Hence $u^*$ is determined as a unitary intertwiner
\[
u^*: \msE \otimes_{\msI} {}_{\Delta}(\msU_{\infty}\overline{\otimes} \msI_{\infty})_{\msU_{\infty}\overline{\otimes} \msI_{\infty}} \rightarrow {}_{\Delta} (\msU_{\infty}\overline{\otimes}\msE_{\infty})_{\msU_{\infty}\overline{\otimes} \msI_{\infty}},
\]
which must be of the form 
\[
u^*(\xi\otimes (x\otimes y)) = \Delta(\xi)(x\otimes y)
\]
for some $\Delta: \msE \rightarrow \msU_{\infty}\overline{\otimes} \msE$ satisfying \eqref{EqIdEqInProd}. As a unitary natural transformation, we then see that $u^*$ is given by
\[
u^*_{V,X}: \msE\otimes_{\msI_{\infty}}(V\boxtimes X) \rightarrow V \boxtimes (\msE \otimes_{\msI_{\infty}} X),\quad \xi\otimes (v\otimes w) \mapsto \xi_{[-1]}v\otimes (\xi_{[0]}\otimes w). 
\]
It is then also easily seen that the coherence condition for $u$ is equivalent to the coassociativity of $\Delta$. 

This provides us with an equivalence of C$^*$-categories ${}^{\msU_{\infty}}_{\msI_{\infty}}\msM_{\msI_{\infty}} \cong \End_{\Rep(\mcU)}(\Rep(\mcI))$. The monoidality follows by using the trivial identification maps 
\[
\msE\otimes_{\msI_{\infty}} (\msF\otimes_{\msI_{\infty}} X) = (F_{\msE} \circ F_{\msF})(X) \rightarrow F_{\msE\widetilde{\otimes} \msF}(X) = (\msE\otimes_{\msI_{\infty}} \msF)\otimes_{\msI_{\infty}} X, 
\]
\end{proof}

\begin{Rem}
We note that the use of relative Hilbert bimodules with respect to the quantum group itself as the coideal was already used to great effect in the theory of induction for locally compact quantum group representations, cf.\ \cite{Vae05}.
\end{Rem}

\subsection{Proof of Theorem \ref{TheoEquiCats}}

Given an $\msU_{\infty}$-comodule structure on a space $M$, we actually prefer to use the module structure 
\[
a\xi = \tau(a_{(1)},\delta_A^{-1/2}) a_{(2)}\blacktriangleright \xi =\tau(a_{(1)},\delta_A^{-1/2}) \tau(S_A^{-1}(a_{(2)}),\xi_{[-1]})\xi_{[0]},\qquad \xi\in M.
\]
Indeed, if then $\mcE$ is an $\msU_{\infty}$-relative right $W^*$-Hilbert $\msI_{\infty}$-bimodule, it is easily seen that $\mcE_0 = \mcI \mcE \mcI$ is endowed with a unique $A$-relative $C$-bicomodule structure with respect to the above $A$-module structure such that the left and right comodule structures $\delta_l,\delta_r$ of $C$ are determined by 
\[
(\id\otimes \tau(-,x))\delta_r(\xi) = x\xi,\qquad (\tau(-,x)\otimes \id)\delta_l(\xi) = \xi x,\qquad x\in \msI_{\infty}. 
\]
In particular, for  all $a\in A,x\in \msI_{\infty},\xi \in \mcE$ it holds that
\begin{equation}\label{EqCommIA}
x(a\xi) = \tau(a_{(2)},x_{(1)}) a_{(1)}(x_{(2)}\xi),\qquad (a\xi)x = \tau(a_{(1)},x_{(1)}) a_{(2)}(\xi x_{(2)}).
\end{equation}
Note that these are well-defined: the right hand sides are actually finite sums, since $A$ has finite support as functionals on $\msU_{\infty} \supseteq \msI_{\infty}$. 

Compatibility of the $A$-module structure and the $\msI_{\infty}$-valued inner product on $\mcE$ is governed by the identity 
\begin{equation}\label{EqCorrInnProd}
\tau(\sigma_A^{-1}(a)c,\langle \xi,\eta\rangle_{\msI_{\infty}}) = \tau(c,\langle a_{(1)}^*\xi,S_A(a_{(2)})\eta\rangle_{\msI_{\infty}}),\qquad a,c\in A,\xi,\eta\in \msI_{\infty}.
\end{equation}

Equivalent ways of writing \eqref{EqCorrInnProd} are
\begin{equation}\label{EqCorrInnProdAlt}
\tau(c,\langle a^*\xi,\eta\rangle_{\msI_{\infty}}) = \tau(\sigma_A^{-1}(a_{(1)})c,\langle \xi,a_{(2)}\eta\rangle_{\msI_{\infty}}),
\end{equation}
\begin{equation}\label{EqCorrInnProdAlt2}
 \tau(c,\langle\xi,S_A(a)\eta\rangle_{\msI_{\infty}}) = \tau(\sigma_A^{-1}(a_{(2)})c,\langle S_A(a_{(1)})^*\xi,\eta\rangle_{\msI_{\infty}}). 
\end{equation}


Consider the complex one-parametergroup of automorphisms on $B$ given by 
\begin{equation}\label{EqDefKappa}
(\kappa_B)_z = \tau_{z}\circ (\sigma_A)_{-z}\circ (\sigma_B)_{2z},\qquad (\kappa_B)_z(\pi(w,v)) = \pi(\delta_B^{i\overline{z}}w,v),\qquad w\in \Phi_C\Hsp_{\alpha},v\in\Hsp_{\alpha}.
\end{equation}

The following proposition shows that the functor $F_{\infty}$ in Theorem \ref{TheoEquiCats} is well-defined.

\begin{Prop}
Let $\mcE \in {}_{\msI_{\infty}}^{\msU_{\infty}}\msM_{\msI_{\infty}}$, and consider $\Hsp = \mcE\Phi_C$ with the scalar product
\begin{equation}\label{EqHilbStruc}
\langle \xi,\eta\rangle = \tau(1_C,\langle\xi,\eta\rangle_{\msI_{\infty}}),\qquad \xi,\eta\in \Hsp. 
\end{equation}
Then $\Hsp$ is a Hilbert space, stable under the left $B$-module structure of $\mcE$, and 
\begin{equation}\label{EqStarRepDriDou}
\pi_{\Hsp}(b)\xi = (\kappa_B)_{i/2}(b)\xi,\qquad \pi_{\Hsp}(x)\xi = x\xi,\qquad b\in B,x\in \mcI_{\infty}
\end{equation}
defines a non-degenerate $*$-representation of $\mcD(B,\mcI)$. 
\end{Prop}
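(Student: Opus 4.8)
The plan is to verify the claimed structure in four stages: (1) $\Hsp = \mcE\Phi_C$ is a well-defined Hilbert space; (2) it is stable under the left $B$-action defined via \eqref{EqCommIA} (equivalently under $a\xi = \tau(a_{(1)},\delta_A^{-1/2})a_{(2)}\blacktriangleright\xi$); (3) the formulas in \eqref{EqStarRepDriDou} actually respect the commutation relations of $\mcD(B,\mcI)$; and (4) the resulting representation is a $*$-representation and non-degenerate. First I would observe that $\langle\xi,\eta\rangle_{\msI_\infty}$ restricted to $\xi,\eta\in\mcE\Phi_C$ actually takes values in $\Phi_C\msI_\infty\Phi_C$, so that $\tau(1_C,-)$ applied to it is exactly $\varepsilon(-)$ composed with the support projection; positivity and definiteness then follow from positivity of the $\msI_\infty$-valued inner product together with faithfulness of the state $\tau(1_C,-)$ on $\Phi_C\msI_\infty\Phi_C \cong B(\Phi_C\Gsp_\beta)$-blocks. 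Hence $\Hsp$ is a pre-Hilbert space; since by Lemma~\ref{LemRepIsBound} any $*$-representation of $\mcD(B,\mcI)$ is automatically bounded, we may complete without loss.

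For stability under $B$: using \eqref{EqCommIA} with $x = \Phi_C$, we get for $\xi\in\mcE\Phi_C$ that $(a\xi)\Phi_C = \tau(a_{(1)},(\Phi_C)_{(1)})\,a_{(2)}(\xi(\Phi_C)_{(2)})$, and by Lemma~\ref{LemSSS} (namely $\Delta(\Phi_C)(S_A^{-1}(\Phi_C)\otimes 1) = \Phi_C\otimes\Phi_C$, read on the $\msU$-side as $(1\otimes\Phi_C)\Delta(\Phi_C)$-type identities) one rewrites this so that, after using the modular twist $\tau(a_{(1)},\delta_A^{-1/2})$ built into the action, the element lies back in $\mcE\Phi_C$. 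The precise bookkeeping here is where I would be careful: one needs the left $B$-module structure to interact with the right $\Phi_C$-cutoff, and this is exactly what the twisting by $\delta_A^{-1/2}$ and the identity $\delta_B^{-1/2} = \delta_A^{-1/4}R(\Phi_C)\delta_A^{-1/4}$ are designed to make work. I expect that $B\cdot(\mcE\Phi_C)\subseteq\mcE\Phi_C$ comes out cleanly once one notes that $b\in B$ satisfies $b = \pi(\Phi_C\xi,\eta)$-type expansions and that $\Phi_C(b_{(1)})\otimes b_{(2)}$-relations from \eqref{EqBAreCoinv} translate into the required invariance.

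For the $*$-property and the algebra relations: the key computation is to check that $\pi_{\Hsp}(b)^* = \pi_{\Hsp}(b^*)$ on $\Hsp$, which amounts to verifying that $(\kappa_B)_{i/2}$ is the correct "half modular" twist making the $B$-action symmetric with respect to $\langle-,-\rangle = \tau(1_C,\langle-,-\rangle_{\msI_\infty})$; this is precisely governed by the compatibility identity \eqref{EqCorrInnProd} (equivalently \eqref{EqCorrInnProdAlt}, \eqref{EqCorrInnProdAlt2}). Concretely, $\langle\pi_{\Hsp}(b)\xi,\eta\rangle = \tau(1_C,\langle(\kappa_B)_{i/2}(b)\xi,\eta\rangle_{\msI_\infty})$, and one moves $b$ across using \eqref{EqCorrInnProdAlt} together with the explicit form $(\kappa_B)_z(\pi(w,v)) = \pi(\delta_B^{i\bar z}w,v)$ from \eqref{EqDefKappa}; the shift by $\delta_B^{i/2}$ on one leg is exactly what is needed to undo the discrepancy between $\sigma_A^{-1}$ appearing in \eqref{EqCorrInnProd} and the fact that we pair against $1_C$ (which only sees $\Phi_C\msU\Phi_C$, where $\sigma_A$ differs from $\mathrm{id}$ by $\delta_B$). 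Checking $\pi_{\Hsp}(x)^* = \pi_{\Hsp}(x^*)$ for $x\in\mcI$ is immediate since the left $\mcI$-action on the Hilbert $\msI_\infty$-module is by definition $*$-compatible. Finally, that $\pi_{\Hsp}$ respects the cross relations $yb = b_{(1)}(y\lhd b_{(2)})$ of $\mcD(B,\mcI)$ reduces, via \eqref{EqCommIA}, to the compatibility of the $A$-module and the left $\msI_\infty$-module structures on $\mcE$, which is built into the definition of an $\msU_\infty$-relative bimodule together with the fact that $y\lhd b$ corresponds to $\tau(b_{(2)},-)$-twisting; non-degeneracy follows since $\mcI\Hsp = \mcI\mcE\Phi_C$ is dense (indeed equal to $\mcE\Phi_C$ as $\mcI$ acts non-degenerately on $\mcE$). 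The main obstacle, as indicated, is getting the modular twist $(\kappa_B)_{i/2}$ to land exactly right in the $*$-computation — i.e.\ reconciling $\sigma_A$, $\sigma_B$, $\tau_A$ in \eqref{EqDefKappa} with the identity \eqref{EqCorrInnProd} — and I would do this block-by-block on $\End(\Phi_C\Gsp_\beta)$ using the Peter--Weyl relations \eqref{EqPW1}--\eqref{EqPW2} if the abstract manipulation gets unwieldy.
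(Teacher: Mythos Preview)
Your plan is correct and follows essentially the same route as the paper: establish the Hilbert space structure on $\mcE\Phi_C$, verify $B$-stability via \eqref{EqCommIA} and the identities in Lemma~\ref{LemSSS}, check the $*$-property through \eqref{EqCorrInnProd}--\eqref{EqCorrInnProdAlt2}, and deduce the cross relations from \eqref{EqCommIA} together with the fact that $(\kappa_B)_{i/2}$ is a left convolution.

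Two small corrections are worth flagging. First, completeness of $\Hsp$ does not require invoking Lemma~\ref{LemRepIsBound} and completing afterwards: since $\langle\xi,\eta\rangle_{\msI_\infty} = \langle\xi,\eta\rangle\Phi_C$ for $\xi,\eta\in\mcE\Phi_C$, the norm on $\Hsp$ is just the Hilbert module norm restricted to the closed submodule $\mcE\Phi_C$, so $\Hsp$ is already complete. Second, you attribute the $B$-stability of $\mcE\Phi_C$ to the modular twist $\tau(a_{(1)},\delta_A^{-1/2})$ and to the identity $\delta_B^{-1/2} = \delta_A^{-1/4}R(\Phi_C)\delta_A^{-1/4}$; neither is needed there. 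Stability follows directly from $b = \Phi_C(b_{(1)})b_{(2)}$ together with \eqref{EqCommIA} and $\Delta(\Phi_C)(1\otimes\Phi_C) = \Phi_C\otimes\Phi_C$, giving $b(\xi\Phi_C) = \Phi_C(S_A(b_{(1)}))(b_{(2)}\xi)\Phi_C$. The modular data enter only in the $*$-computation, exactly as you describe in your stage~(4): the paper computes $\langle\xi,b\eta\rangle = \langle(\kappa_B)_i(b^*)\xi,\eta\rangle$ for the untwisted dot action, whence the half-twist $(\kappa_B)_{i/2}$ yields a $*$-representation. Your Peter--Weyl fallback is unnecessary; the direct computation via $(a\eta)\Phi_C = E_B(a)\eta$ and \eqref{EqCorrInnProdAlt2} suffices.
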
 

It is easily checked that $(\kappa_B)_{i/2}(b)\xi = S_A^2(S_B^{-2}(b))\blacktriangleright \xi$, in agreement with Theorem \ref{TheoEquiCats}.

\begin{proof}
By the right $\msI_{\infty}$-linearity of $\langle -,-\rangle_{\msI_{\infty}}$ and the fact that $\Phi_C x = x\Phi_C = \tau(1_C,x)\Phi_C$ for $x\in \msI_{\infty}$, it is clear that 
\[
\langle \xi,\eta\rangle_{\msI_{\infty}} = \langle \xi,\eta\rangle \Phi_C,\qquad \xi,\eta\in \Hsp. 
\] 
Hence \eqref{EqHilbStruc} defines a pre-Hilbert space structure, and as $\mcE$ is complete, also $\Hsp$ must be complete, and hence a Hilbert space. 

Taking $b\in B,\xi \in \mcE$ we obtain from $b = \Phi_C(b_{(1)})b_{(2)}$, \eqref{EqCommIA} and \eqref{EqIdPhiCC} that
\begin{equation}\label{EqRepBH}
b(\xi\Phi_C) = (b_{(2)} \xi)\Phi_C(S_A(b_{(1)})-) = \Phi_C(S_A(b_{(1)}))(b_{(2)}\xi)\Phi_C,
\end{equation}
hence $\Hsp$ is a $B$-module upon restriction of the $A$-module structure on $\mcE$. Furthermore, for $a\in A,\eta\in \Hsp$ we have 
\[
(a\eta)\Phi_C =  a_{(2)}(\eta \Phi_C \Phi_C(a_{(1)}-)) = E_B(a)\eta,
\]
so from \eqref{EqCorrInnProdAlt2} we obtain for $\xi,\eta\in \Hsp$ and $b\in B$ that 
\begin{multline}\label{EqStarCompB}
\langle \xi,b\eta\rangle = \tau(S_A(b_{(1)}),\langle \sigma_A(S_A^2(b_{(2)}))^*\xi,\eta\rangle_{\msI_{\infty}}) = \Phi_C(S_A(b_{(1)}))\langle E_B(\sigma_A(S_A^2(b_{(2)}))^*)\xi,\eta\rangle \\= \langle \Phi_C(S_A^{-2}(b_{(1)}^*)) E_B(\sigma_A^{-1}(S_A^{-2}(b_{(2)}^*)))\xi,\eta\rangle,
\end{multline}
where in the last equality we used $\Phi_C^* = \Phi_C$. Now a direct computation shows that for $b = \pi(w,v) \in B$ with $w\in \Phi \Hsp_{\alpha}$ we have 
\[
\Phi_C(S_A^{-2}(b_{(1)})) E_B(\sigma_A^{-1}(S_A^{-2}(b_{(2)}))) = \pi(\Phi_C\delta_AS^2(\Phi_C)w,v) = \pi((\Phi_C\delta_A^{1/2}\Phi_C)^2w,v) = (\kappa_B)_i(b).
\]
It follows from \eqref{EqStarCompB} that \eqref{EqStarRepDriDou} indeed defines $*$-representations of $B$ and $\mcI$ respectively. As moreover $\kappa_B$ acts by convolution on the left, it is further clear that we still have from \eqref{EqCommIA} the commutation relation 
\[
\pi_{\Hsp}(x)\pi_{\Hsp}(b) =  \pi_{\Hsp}(b_{(1)}) \pi_{\Hsp}(x \lhd b_{(2)}),\qquad x\in \mcI,b\in B,
\]
so that we indeed obtain a $*$-representation of $\mcD(B,\mcI)$. It is clearly non-degenerate, as the $\mcI$-representation is obtained from restriction of a unital, normal $\msI_{\infty}$-representation. 
\end{proof}

Compatibility of $F_{\infty}$ with morphisms is immediate, hence $F_{\infty}$ defines a C$^*$-functor.

We now construct the adjoint functor $G_{\infty}$ of Theorem \ref{TheoEquiCats}. As a preliminary remark, we note that any non-degenerate $*$-representation of $\mcI$ automatically extends to a normal $*$-representation of $\msI_{\infty}$. 



\begin{Prop}
Let $\Hsp$ be a non-degenerate $*$-representation of $\mcD(B,\mcI)$, and consider the $\msU_{\infty}$-relative right $\msU_{\infty}$-Hilbert W$^*$-module 
\[
\msU_{\infty} \overline{\otimes} \Hsp,\qquad \langle x\otimes \xi,y\otimes \eta\rangle_{\msU_{\infty}} = x^*y \langle \xi,\eta\rangle 
\]
in ${}_{\msI_{\infty}}^{\msU_{\infty}}\msM_{\msU_{\infty}}$, where $\msI_{\infty}$ acts via the coproduct and $\msU_{\infty}$ coacts by the coproduct in the first leg. Then 
\[
\langle \xi,\eta\rangle_{\msU_{\infty}} \in \msI_{\infty}, \qquad \xi,\eta\in \msU_{\infty}\overline{\square} \Hsp \subseteq \msU_{\infty}\overline{\otimes} \Hsp, 
\]
and by restricting the right $\msU_{\infty}$-structure to $\msI_{\infty}$ we obtain $\mcE = \msU_{\infty}\overline{\square} \Hsp \in {}_{\msI_{\infty}}^{\msU_{\infty}}\msM_{\msI_{\infty}}$.  
\end{Prop}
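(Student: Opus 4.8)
The statement to prove is that for a non-degenerate $*$-representation $\Hsp$ of $\mcD(B,\mcI)$, the module $\mcU_\infty \overline\square \Hsp \subseteq \msU_\infty \overline\otimes \Hsp$ carries an $\msI_\infty$-valued inner product (i.e.\ the $\msU_\infty$-valued one restricts into $\msI_\infty$), and hence defines an object of ${}^{\msU_\infty}_{\msI_\infty}\msM_{\msI_\infty}$. The plan is to first unpack the defining condition $(1\otimes \pi_\Hsp(b))z = z(\blacktriangleleft b\otimes \id)$ for $z\in \msU_\infty\overline\square\Hsp$ in terms of the right $A$-module structure on $\msU_\infty$ from the previous subsection, namely $\tau(c, x\blacktriangleleft a) = \tau(cS_A^{-1}(a),x)$. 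The key observation will be that this condition pins down $\langle z,w\rangle_{\msU_\infty}$ (for $z,w$ in the cotensor product) as an element that is $\blacktriangleleft$-invariant with respect to all $b\in B$ when paired appropriately — and one must then argue that the $B$-invariant part of $\msU_\infty$ under the $A$-module structure $\blacktriangleleft$ is exactly $\msI_\infty$ (or rather its completion inside $\msU_\infty$). Concretely: given $z=\sum x_i\otimes \xi_i$, $w=\sum y_j\otimes\eta_j$ in $\msU_\infty\overline\square\Hsp$, I would compute $\tau\!\left(cS_A^{-1}(b),\langle z,w\rangle_{\msU_\infty}\right)$ using the defining relation to move $b$ across, match it with $\langle (\blacktriangleleft b)z, w\rangle = \langle z, (1\otimes\pi_\Hsp(b^*))w\rangle$ type manipulations, and show this forces $\langle z,w\rangle_{\msU_\infty}\blacktriangleleft b = \varepsilon(b)\langle z,w\rangle_{\msU_\infty}$, i.e.\ $\langle z,w\rangle_{\msU_\infty}\in \{v\in\msU_\infty : v\blacktriangleleft b = \varepsilon(b)v\ \forall b\in B\}$. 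Identifying this fixed-point space with $\msI_\infty$ is then the crux: by definition $B = I^\perp$-type duality (cf.\ \eqref{EqBAreCoinv} and the remarks on stabilizer duals), a bounded functional on $A$ is annihilated by $AB_+$ under right multiplication precisely when it factors through $C = A/AB_+$, i.e.\ lies in $\msI$; boundedness then places it in $\msI_\infty$.

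Second, I would verify that the remaining structure maps behave correctly. The left $\msI_\infty$-action on $\msU_\infty\overline\otimes\Hsp$ is via the coproduct in the first leg, $x\cdot(y\otimes\xi) = \Delta(x)(y\otimes\xi)$; one checks this preserves the subspace $\msU_\infty\overline\square\Hsp$ using that $\Delta(\msI_\infty)\subseteq \msU_\infty\overline\otimes\msI_\infty$ (the left coideal property) together with the compatibility between $\Delta$ and $\blacktriangleleft$. The $\msU_\infty$-coaction is $\Delta\otimes\id$ in the first leg, which again restricts to the cotensor product by coassociativity. The right $\msI_\infty$-module structure is the point of the theorem: once we know $\langle\cdot,\cdot\rangle_{\msU_\infty}$ lands in $\msI_\infty$, completeness of $\msU_\infty\overline\square\Hsp$ as a W$^*$-Hilbert $\msI_\infty$-module follows since a cotensor product (equalizer) of W$^*$-modules against normal maps is again a W$^*$-module — this is a standard closedness argument. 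Finally I would check the two equivariance identities \eqref{EqIdEqInProd}: $\Delta(x\xi y) = \Delta(x)\Delta(\xi)\Delta(y)$ is automatic from multiplicativity of $\Delta$ on $\msU_\infty$, and $\Delta(\langle\xi,\eta\rangle_{\msI_\infty}) = \langle\Delta(\xi),\Delta(\eta)\rangle_{\msU_\infty\overline\otimes\msI_\infty}$ reduces to the definition $\langle x\otimes\xi,y\otimes\eta\rangle = x^*y\langle\xi,\eta\rangle$ and again multiplicativity of $\Delta$.

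I expect the main obstacle to be the identification of the fixed-point space $\{v\in\msU_\infty : v\blacktriangleleft b = \varepsilon(b)v\ \forall b\in B\}$ with $\msI_\infty$, and more precisely establishing that a bounded family defining such a $v$ genuinely arises as a normal functional factoring through $C$ — one has to be careful that $\msU_\infty$ consists of \emph{uniformly bounded} families and not arbitrary elements of $\msU = \prod_\alpha B(\Hsp_\alpha)$, so the duality with $A$ is only via normal (not all) functionals, and the coideal characterization \eqref{EqBAreCoinv} must be transported to this bounded setting. Once this block is in place, everything else is bookkeeping with the coproduct and the already-recorded compatibility formulas \eqref{EqCommIA}, \eqref{EqCorrInnProd}.
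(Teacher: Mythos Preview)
Your overall strategy matches the paper's: show that $\langle w,z\rangle_{\msU_\infty}$ vanishes on $AB_+$ (equivalently, is $\blacktriangleleft$-invariant under $B$), then check the remaining module/comodule compatibilities. Two points deserve comment.

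First, your ``main obstacle'' is not one. The identification of the $\blacktriangleleft$-$B$-fixed points in $\msU_\infty$ with $\msI_\infty$ is immediate: $v\blacktriangleleft b = \varepsilon(b)v$ means $\tau(cS_A^{-1}(b),v)=\varepsilon(b)\tau(c,v)$, i.e.\ $v$ vanishes on $AS_A^{-1}(B_+)$; but the paper records (just after \eqref{EqBAreCoinv}) that $S_A(AB_+)^* = AB_+$, whence $AS_A^{-1}(B_+)=AB_+$, so $v$ factors through $C$ and lies in $\msI$. Boundedness is automatic since $v\in\msU_\infty$ already. The paper in fact bypasses $\blacktriangleleft$ entirely and shows $\tau(cb,\langle w,z\rangle)=\varepsilon(b)\tau(c,\langle w,z\rangle)$ directly, which is cleaner.

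Second, the genuine work lies in that computation, and your sketch ``$\langle(\blacktriangleleft b)z,w\rangle = \langle z,(1\otimes\pi_\Hsp(b^*))w\rangle$ type manipulations'' glosses over a real subtlety. Expanding $\langle w,z\rangle\blacktriangleleft b$ naively produces $\langle w(\blacktriangleleft S(b_{(2)}^*)\otimes\id), z(\blacktriangleleft b_{(1)}\otimes\id)\rangle$, and $S(b_{(2)}^*)\notin B$ in general, so the cotensor condition cannot be applied to the $w$-side. The paper's move is to compute $\tau(cb,\langle w,z\rangle)$ instead, which splits as $\tau(c,\langle w(\blacktriangleleft b_{(1)}^*\otimes\id),\, z(\blacktriangleleft S(b_{(2)})\otimes\id)\rangle)$; now $b_{(1)}^*\in B$ (right coideal plus $*$-closure), so cotensor on $w$ gives $(1\otimes\pi_\Hsp(b_{(1)}^*))w$; the $*$-representation property then transfers $\pi_\Hsp(b_{(1)})$ to the $z$-side, where cotensor applies with $b_{(1)}\in B$; finally $b_{(2)}S_A^{-1}(b_{(1)})=\varepsilon(b)1$ collapses everything. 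This double use of cotensor, bridged by the adjoint, is the crux you should make explicit.

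A minor omission: you should also verify separately that right multiplication by $x\otimes 1$ with $x\in\msI_\infty$ preserves $\msU_\infty\overline{\square}\Hsp$ (this does not follow from the inner product landing in $\msI_\infty$); the paper does this using $\tau(S_A^{-1}(b),x)=\varepsilon(b)\varepsilon(x)$ for $b\in B$, $x\in\msI_\infty$.
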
 

\begin{proof}
For $x\in \msI_{\infty},b\in B$ and $z \in \msU_{\infty}\overline{\square} \Hsp$, we compute that 
\begin{eqnarray*}
&& \hspace{-2cm}  (z(x\otimes 1))(\blacktriangleleft b \otimes \id)
\\  &=& \tau(S_A^{-1}(b_{(1)}),x_{(2)}) (\id\otimes \tau(S_A^{-1}(b_{(2)}),-)\otimes \id)((\Delta\otimes \id)(z))(x_{(1)}\otimes 1)\\ 
& =&  (z(\blacktriangleleft b \otimes \id))(x\otimes 1)\\ 
&=& (1\otimes \pi_{\Hsp}(b))(z(x\otimes 1)),
\end{eqnarray*}
where in the third line we used that $\tau(S_A^{-1}(b),x) = \overline{\tau(b^*,x^*)} =  \varepsilon(b)\varepsilon(x)$ for all $b\in B$ and $x\in \msI_{\infty}$. It follows that  $(\msU_{\infty})_{\bullet}\overline{\square} \Hsp$ is a right $\msI_{\infty}$-module.

Let $c\in A$ and $b\in B$. Then we compute for $w,z\in \msU_{\infty}\overline{\square} \Hsp$ that 
\begin{eqnarray*}
\tau(cb,\langle w,z\rangle_{\msU_{\infty}}) 
&=& \tau(c,(\id\otimes \tau(S_A^{-1}(b_{(1)}^*),-)\otimes \id)(\Delta\otimes \id)w,(\id\otimes \tau(b_{(2)},-)\otimes \id)(\Delta\otimes \id)z\rangle_{\msU_{\infty}})\\
&=& \tau(c,\langle(\id\otimes \pi_{\Hsp}(b_{(1)}^*))w,(\id\otimes \tau(b_{(2)},-)\otimes \id)(\Delta\otimes \id)z\rangle_{\msU_{\infty}}) \\
&=& \tau(c,\langle w,(\id\otimes \tau(b_{(2)},-)\otimes \pi_{\Hsp}(b_{(1)}))(\Delta\otimes \id)z\rangle_{\msU_{\infty}}) \\
&=& \tau(c,\langle w,(\id\otimes \tau(b_{(2)},-)\otimes \tau(S_A^{-1}(b_{(1)}),-)\otimes \id)((\id\otimes \Delta)\Delta)z\rangle_{\msU_{\infty}})\\ 
&=& \varepsilon(b) \tau(c,\langle w,z\rangle_{\msU_{\infty}}).
\end{eqnarray*}
It follows that $\tau(-,\langle w,z\rangle_{\msU_{\infty}})$ vanishes on $AB_+$, and hence $\langle w,z\rangle_{\msU_{\infty}}$ defines an element of $\msI_{\infty}$. 

It follows that $(\msU_{\infty})_{\bullet}\overline{\square} \Hsp$ is a right W$^*$-Hilbert $\msI_{\infty}$-module. If then $x\in \msI_{\infty}$ and $z\in \msU_{\infty}\overline{\square} \Hsp$, we get for $b\in B$ that 
\begin{multline*}
(1\otimes \pi_{\Hsp}(b))(x_{(1)}\otimes \pi_{\Hsp}(x_{(2)}))z = \tau(S_A^{-1}(b_{(2)}),x_{(2)})(x_{(1)}\otimes \pi_{\Hsp}(x_{(3)})\pi_{\Hsp}(b_{(1)}))z \\  = \tau(S_A^{-1}(b_{(2)}),x_{(2)})(x_{(1)}\otimes \tau(S_A^{-1}(b_{(1)}),-)\otimes \pi_{\Hsp}(x_{(3)}))(\Delta\otimes \id)z \\= (\id\otimes \tau(S_A^{-1}(b_{(1)}),-)\otimes \id)(\Delta\otimes \id)((x_{(1)}\otimes \pi_{\Hsp}(x_{(2)}))z). 
\end{multline*}
Hence ${}_{\bullet}(\msU_{\infty})_{\bullet}\overline{\square} {}_{\bullet}\Hsp$ is a W$^*$-Hilbert $\msI_{\infty}$-bimodule. 

It is automatic from the definitions that the $\msU_{\infty}$-coaction via ${}^{\bullet}\msU_{\infty}\overline{\otimes} \Hsp$ preserves $\msU_{\infty} \overline{\square} \Hsp$, and that the latter thus becomes a $\msU_{\infty}$-relative W$^*$-Hilbert $\msI_{\infty}$-bimodule.
\end{proof}

Again compatibility of $G_{\infty}$ with the morphisms is immediate, so $G_{\infty}$ defines a C$^*$-functor.


Next, we consider the equivalence adjunction between $F_{\infty}$ and $G_{\infty}$. We first consider the counit.

\begin{Prop}\label{PropEqDefCounit}
Let $\Hsp \in \Rep(\mcD(B,\mcI))$ and let
\[
\epsilon_{\Hsp} = \epsilon: (\msU_{\infty}\overline{\square} \Hsp)\Phi_C \rightarrow \Hsp,\qquad z \mapsto (\varepsilon \otimes \id)z.
\]
Then $\epsilon$ is a unitary.
\end{Prop}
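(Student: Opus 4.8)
The plan is to show that $\epsilon$ is a well-defined linear isometry with full range. Well-definedness is immediate: writing $\varepsilon=\tau(1_A,-)$ for the counit of $\msU$ (a normal $*$-character on $\msU_\infty$), the map $\epsilon$ is simply the restriction to the orthogonally complemented, hence closed, subspace $(\msU_\infty\overline{\square}\Hsp)\Phi_C$ of the contraction $(\varepsilon\otimes\id)\colon\msU_\infty\overline{\otimes}\Hsp\to\Hsp$. So everything comes down to isometry and surjectivity.

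For the isometry I would compare the two inner products directly. Let $z,w\in(\msU_\infty\overline{\square}\Hsp)\Phi_C$. By the previous proposition $\langle z,w\rangle_{\msU_\infty}\in\msI_\infty$, and this is by construction the $\msI_\infty$-valued inner product on $G_\infty(\Hsp)$. Since $\varepsilon$ is a normal state, $(\varepsilon\otimes\id)$ is precisely the GNS reduction of the $\msU_\infty$-W$^*$-module $\msU_\infty\overline{\otimes}\Hsp$ along $\varepsilon$, so that $\langle\epsilon(z),\epsilon(w)\rangle_{\Hsp}=\varepsilon(\langle z,w\rangle_{\msU_\infty})$ --- a routine check on elementary tensors, extended by normality. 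As $\langle z,w\rangle_{\msU_\infty}$ lies in $\msI_\infty$, i.e.\ factors through $\pi_C$, and $\pi_C(1_A)=1_C$, we get $\varepsilon(\langle z,w\rangle_{\msU_\infty})=\tau(1_C,\langle z,w\rangle_{\msI_\infty})$, which by \eqref{EqHilbStruc} is exactly $\langle z,w\rangle_{F_\infty(G_\infty(\Hsp))}$. Hence $\epsilon$ preserves inner products; in particular it is injective with closed range.

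The real content is surjectivity. Given $\eta\in\Hsp$, it suffices to produce $z\in\msU_\infty\overline{\square}\Hsp$ with $(\varepsilon\otimes\id)z=\eta$; replacing $z$ by $z\Phi_C$ then lands in $(\msU_\infty\overline{\square}\Hsp)\Phi_C$ without changing the image, since $\varepsilon(\Phi_C)=\Phi_C(1_A)=1$. My approach is to view $\msU_\infty\overline{\square}\Hsp$ as the module of ``$\Hh$-equivariant $\Hsp$-valued functions on $\G$'', with $\Hh$ the stabilizer quantum subgroup, and $(\varepsilon\otimes\id)$ as evaluation at the base point; one then builds the required equivariant extension of $\eta$ by averaging over the \emph{compact} quantum subgroup $\Hh$, which is made available precisely by the cosemisimplicity of $C$ and the invariant projection $\Phi_C$. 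Concretely, one lifts $\eta$ to $\msU_\infty\overline{\otimes}\Hsp$ and corrects it using $\Phi_C$ together with the $A$-module structures $\blacktriangleright,\blacktriangleleft$ and the invariance identities \eqref{EqInvC}, \eqref{EqIdPhiCC}, so that the corrected element satisfies the square condition, the normalization $\Phi_C(1_A)=1$ ensuring its value under $\varepsilon\otimes\id$ is unchanged. The main obstacle is pinning down the correct averaging formula and checking that the corrected element is uniformly bounded --- i.e.\ genuinely an element of the W$^*$-module --- for which one uses cosemisimplicity of $C$, equivalently faithful coflatness of $A$ over $C$. Alternatively, once the unit $\eta_{\mcE}$ of Theorem~\ref{TheoEquiCats} is available and known to be isometric, surjectivity of $\epsilon$ follows from the triangle identity $G_\infty(\epsilon)\circ\eta_{G_\infty(\Hsp)}=\id$ together with faithfulness of $G_\infty$; I would still favour the explicit-averaging route as the self-contained argument.
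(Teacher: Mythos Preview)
Your isometry argument is correct and matches the paper's one-line observation that this follows directly from the definition of the inner product on the domain.

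Surjectivity, however, remains a genuine gap. You correctly identify the heuristic --- average over the compact stabilizer $\Hh$ using $\Phi_C$ --- but you do not write down the averaging map, and you yourself flag ``pinning down the correct averaging formula and checking that the corrected element is uniformly bounded'' as the outstanding obstacle. That obstacle \emph{is} the content of the proof. Your alternative via the triangle identity is also not available here: you would need $G_\infty$ to reflect surjections, which is part of the adjoint equivalence being established, and mere isometry of $\eta_{\mcE}$ does not give it.

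The paper resolves this by writing down an explicit right inverse. For $\xi\in\Hsp$ one sets
\[
\delta(\xi)\in\Lin(A,\Hsp),\qquad \delta(\xi)(a)=\pi_{\Hsp}\bigl(F_B(S_A(a))\bigr)\xi,
\]
where $F_B=(S_A^{-1}(\Phi_C)\otimes\id)\Delta\colon A\to B$ is the \emph{left} $B$-linear projection of \eqref{EqCondExp2}. A four-line computation using \eqref{EqStarE} and the elementary identity $E_B(a_{(1)})S_A(a_{(2)})=\Phi_C(a)1$ gives $\delta(\xi)^*\delta(\eta)=\langle\xi,\eta\rangle\Phi_C$; uniform boundedness of $\delta(\xi)$ is then automatic, its $W^*$-module norm being exactly $\|\xi\|$. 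Left $B$-linearity of $F_B$ translates directly into the $\overline{\square}$-condition, and $(\varepsilon\otimes\id)\delta(\xi)=\pi_{\Hsp}(F_B(1))\xi=\xi$. This \emph{is} the averaging you describe, but the specific choice of $F_B$ (rather than $E_B$) and the twist by $S_A$ are what make both the square condition and the boundedness computation close up simultaneously; without them the proposal is a strategy, not a proof.
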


\begin{proof}
 It follows directly from the definition of the inner product on the domain that $\epsilon$ is an isometry. To see that it is surjective, note first that we can define a map
\[
\delta: \Hsp \rightarrow \msU \widehat{\otimes} \Hsp = \Lin_{\C}(A, \Hsp),\qquad \xi \mapsto (a\mapsto \pi_{\Hsp}(F_B(S_A(a)))\xi),
\]
where $F_B$ was defined in \eqref{EqCondExp2}. We claim that $\delta(\xi) \in \msU_{\infty} \overline{\otimes}\Hsp$. Indeed, we compute for $a\in A$ and $\xi,\eta\in \Hsp$ that 
\begin{eqnarray*}
\tau(a,\delta(\xi)^*\delta(\eta)) &=& \langle \pi_{\Hsp}(F_B(a_{(1)})^*)\xi,\pi_{\Hsp}(F_B(S_A(a_{(2)})))\eta\rangle \\
&=& \langle \xi,\pi_{\Hsp}(E_B(a_{(1)}))\pi_{\Hsp}(F_B(S_A(a_{(2)})))\eta\rangle \\
&=&  \langle \xi,\pi_{\Hsp}(F_B(E_B(a_{(1)})S_A(a_{(2)})))\eta\rangle\\
&=& \Phi_C(a) \langle \xi,\eta\rangle.
\end{eqnarray*} 
Hence $\delta(\xi)^*\delta(\eta) =  \langle \xi,\eta\rangle \Phi_C$, and in particular $\delta(\eta)$ lies in $\msU_{\infty} \overline{\otimes}\Hsp$. The fact that $F_B$ is left $B$-linear leads immediately to $\delta(\xi) \in \msU_{\infty} \overline{\square}\Hsp$, and then $(\varepsilon \otimes \id)\delta(\xi) = \xi$. Hence $\epsilon$ is surjective. 
\end{proof}

We now consider the counit. 

\begin{Prop}\label{PropEqDefUnit}
Let $\mcE \in  {}_{\msI_{\infty}}^{\msU_{\infty}}\msM_{\msI_{\infty}}$. Let $\Hsp = \mcE\Phi_C$, and consider
\[
\eta_{\mcE}= \eta: \mcE \rightarrow \msU_{\infty} \overline{\otimes} \Hsp,\qquad \xi \mapsto \xi_{[-1]}\otimes \xi_{[0]}\Phi_C. 
\]
Then $\eta$ is a well-defined map into $\msU_{\infty} \overline{\square} \Hsp$, and in fact a unitary intertwiner.
\end{Prop}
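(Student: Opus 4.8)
The plan is to check, in turn, that $\eta$ takes values in $\msU_{\infty}\overline{\otimes}\Hsp$, then in $\msU_{\infty}\overline{\square}\Hsp$, that it is a morphism in ${}^{\msU_{\infty}}_{\msI_{\infty}}\msM_{\msI_{\infty}}$, that it preserves the $\msI_{\infty}$-valued inner products, and finally that it is surjective; the last three points together give that $\eta$ is a unitary intertwiner. The first point is immediate: $\Delta(\xi)\in\msU_{\infty}\overline{\otimes}\mcE$, and right multiplication of the second leg by the projection $\Phi_C$ is a bounded operation with image in $\msU_{\infty}\overline{\otimes}\mcE\Phi_C=\msU_{\infty}\overline{\otimes}\Hsp$.

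For the membership in $\msU_{\infty}\overline{\square}\Hsp$ I would test the defining relation $(1\otimes\pi_{\Hsp}(b))\eta(\xi)=\eta(\xi)(\blacktriangleleft b\otimes\id)$ for $b\in B$ by pairing the first leg with an arbitrary $c\in A$ (which detects equalities, since $\msU_{\infty}\subseteq\msU=\Lin(A,\C)$). Using $\tau(c,x\blacktriangleleft a)=\tau(cS_A^{-1}(a),x)$ on $\msU_{\infty}$, the formula $a\blacktriangleright\xi=\tau(S_A^{-1}(a),\xi_{[-1]})\xi_{[0]}$ on $\mcE$, anti-multiplicativity of $S_A$, and linearity, both sides collapse and the relation is seen to be equivalent to the single pointwise identity
\[
\pi_{\Hsp}(b)(\zeta\Phi_C)=(b\blacktriangleright\zeta)\Phi_C,\qquad b\in B,\ \zeta\in\mcE .
\]
This is the crux. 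I would prove it by writing $\pi_{\Hsp}(b)$ as $(\kappa_B)_{i/2}(b)$ acting through the left $B$-module structure of $\mcE$, inserting \eqref{EqRepBH} to pull $\Phi_C$ through, and then using the explicit form \eqref{EqDefKappa} of $(\kappa_B)_{i/2}$ together with the projection identities \eqref{EqIdPhiCC} and \eqref{EqSPhiC}: the point is that the $(\kappa_B)$-twist built into $\pi_{\Hsp}$ in order to make it $*$-preserving is precisely what is reabsorbed by $\Phi_C$, exactly in the vein of the identity $\Phi_C(S_A^{-2}(b_{(1)}))E_B(\sigma_A^{-1}(S_A^{-2}(b_{(2)})))=(\kappa_B)_i(b)$ established in the proof of the preceding proposition.

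The morphism property is then a routine diagram chase. Left $\msI_{\infty}$-linearity follows from multiplicativity of $\Delta$ and the fact that $\pi_{\Hsp}$ restricts on $\msI_{\infty}$ to the defining action on $\Hsp$; right $\msI_{\infty}$-linearity follows from $\Phi_C x=\varepsilon(x)\Phi_C$ for $x\in\msI$ together with $(\id\otimes\tau(1_C,-))\circ\Delta=\id$ on $\msI_\infty$ (so that $\Delta(x)(1\otimes\Phi_C)=x\otimes\Phi_C$); $\msU_{\infty}$-colinearity is coassociativity of $\Delta$. For the inner products I would apply $\id\otimes\tau(1_C,-)$ to the equivariance relation $\Delta(\langle\xi,\zeta\rangle_{\msI_{\infty}})=\langle\Delta\xi,\Delta\zeta\rangle_{\msU_{\infty}\overline{\otimes}\msI_{\infty}}$: on the left this returns $\langle\xi,\zeta\rangle_{\msI_{\infty}}$, because $(\id\otimes\tau(1_C,-))\circ\Delta$ is the identity on $\msI_{\infty}\subseteq\msU_{\infty}$, and on the right, using $\Phi_C y\Phi_C=\varepsilon(y)\Phi_C$ and $\tau(1_C,\Phi_C)=1$ to rewrite the scalar product $\langle\xi_{[0]}\Phi_C,\zeta_{[0]}\Phi_C\rangle$ as $\tau(1_C,\langle\xi_{[0]},\zeta_{[0]}\rangle_{\msI_{\infty}})$, one recognizes exactly $\langle\eta(\xi),\eta(\zeta)\rangle_{\msU_{\infty}}$; in particular this common value lies in $\msI_{\infty}$ and $\eta$ is isometric for the $\msI_{\infty}$-valued inner products.

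Finally, for surjectivity: since $\eta$ is isometric and $\mcE$ is complete, its range is closed, so it suffices to see $\eta$ is onto. A direct computation using \eqref{EqIdPhiCC}, $(\tau(1_A,-)\otimes\id)\circ\Delta=\id$ and $\tau(1_A,\Phi_C)=1$ shows that $\epsilon_{\mcE\Phi_C}\circ(\eta|_{\mcE\Phi_C})=\id_{\mcE\Phi_C}$; by Proposition \ref{PropEqDefCounit} the map $\epsilon_{\mcE\Phi_C}$ is a unitary, hence $\eta|_{\mcE\Phi_C}$ is a unitary onto $(\msU_{\infty}\overline{\square}\Hsp)\Phi_C$. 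This surjectivity on the $\Phi_C$-compression then propagates to all of $\eta$ via its compatibility with the $\msU_{\infty}$-coaction and the $\msI_{\infty}$-bimodule structure — equivalently, via the fact that $\varepsilon_{\mid\msI}$ generates $\Rep(\mcI)$ as a module category over $\Rep(\mcU)$, cf.\ Lemma \ref{LemEqPhiC} — or, alternatively, by exhibiting an explicit right inverse to $\eta$ built, as in the proof of Proposition \ref{PropEqDefCounit}, from the conditional expectations $E_B$ and $F_B$. The main obstacle is the identity of the second paragraph, namely the $\Phi_C$-bookkeeping for the $(\kappa_B)$-twist; the surjectivity step is the next most delicate point. (Together with Proposition \ref{PropEqDefCounit} and the compatibility of $\eta$ and $\epsilon$ checked above, this completes the proof of Theorem \ref{TheoEquiCats}.)
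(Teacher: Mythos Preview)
Your treatment of the first four steps—range in $\msU_{\infty}\overline{\otimes}\Hsp$, membership in $\msU_{\infty}\overline{\square}\Hsp$, bimodule/comodule compatibility, and preservation of the $\msI_{\infty}$-valued inner product—matches the paper's, only with more detail: the paper does the $\overline{\square}$-membership by the same Sweedler computation you outline (your reduction to $\pi_{\Hsp}(b)(\zeta\Phi_C)=(b\blacktriangleright\zeta)\Phi_C$ is exactly what their four-line display establishes), and then declares the remaining compatibilities ``immediate''.

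For surjectivity the two arguments diverge. The paper takes a nonzero $z$ in the orthogonal complement of $\eta(\mcE)$, observes $z^*\Delta(\xi)(x\otimes\Phi_C)=0$ for all $\xi\in\mcE\Phi_C$ and $x\in\msU_{\infty}$, and then invokes $\sigma$-weak density of the set $\{\Delta(\xi)(x\otimes\Phi_C)\}$ in $\msU_{\infty}\overline{\otimes}\Hsp$ to force $z=0$. Your route instead reuses Proposition~\ref{PropEqDefCounit}: from $\epsilon_{\Hsp}\circ\eta|_{\mcE\Phi_C}=\id$ and unitarity of $\epsilon_{\Hsp}$ you get that $\eta$ is onto on the $\Phi_C$-compression, and then propagate. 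This is a legitimate and arguably cleaner alternative, but note that the propagation step is not quite as soft as you suggest: what you need is that a $\msU_{\infty}$-relative W$^*$-Hilbert $\msI_{\infty}$-bimodule $\mcG$ with $\mcG\Phi_C=0$ must vanish. This follows because such a $\mcG$ gives a $\Rep(\mcU)$-module endofunctor $\mcG\otimes_{\msI_{\infty}}-$ of $\Rep(\mcI)$ (the direction of Theorem~\ref{TheoIdentCat} that goes from bimodules to endofunctors, which is proved independently of Theorem~\ref{TheoEquiCats}), and this endofunctor kills the generator $\C_{\varepsilon}$ of the module category (Lemma~\ref{LemEqPhiC}), hence vanishes. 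Your reference to Lemma~\ref{LemEqPhiC} is the right ingredient, but you should make explicit that you are invoking this endofunctor picture. The ``alternative'' you mention, building an explicit right inverse from $E_B,F_B$ as in Proposition~\ref{PropEqDefCounit}, does not obviously go through here (the naive candidate $(\varepsilon\otimes\id)$ only recovers $\xi\Phi_C$, not $\xi$), so I would drop that remark.
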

\begin{proof}
Running through the definitions and using \eqref{EqRepBH}, we see that, for $\xi\in \mcE$ and $b\in B$,
\begin{eqnarray*}
\xi_{[-1]}\otimes \pi_{\Hsp}(b)(\xi_{[0]}\Phi_C) &=& \tau(b_{(1)},\Phi_C\delta_A^{1/2}S(\Phi_C)\delta_A^{-1/2})\tau(S_A^{-1}(b_{(2)}),\xi_{[-1]})\xi_{[-2]}\otimes \xi_{[0]}\Phi_C \\
&=&  \tau(b_{(1)},\Phi_C S_A^{-1}(\Phi_C))\tau(S_A^{-1}(b_{(2)}),\xi_{[-1]})\xi_{[-2]}\otimes \xi_{[0]}\Phi_C\\
&=& \tau(b_{(1)},\Phi_C)\tau(S_A^{-1}(b_{(2)}),\xi_{[-1]})\xi_{[-2]}\otimes \xi_{[0]}\Phi_C \\
&=& \tau(S_A^{-1}(b),\xi_{[-1]})\xi_{[-2]}\otimes \xi_{[0]}\Phi_C,
\end{eqnarray*}
proving that in fact $\xi_{[-1]}\otimes \xi_{[0]}\Phi_C \in \msU_{\infty} \overline{\square} \Hsp$. 

It is immediate then that $\eta$ is an $\msU_{\infty}$-equivariant $\msI_{\infty}$-bimodule map preserving the $\msI_{\infty}$-valued inner product. It remains again to see that $\eta$ is surjective.  If $\eta$ were not surjective, then by the existence of orthogonal complements in W$^*$-Hilbert $\msI_{\infty}$-modules there would exist a non-zero $z\in \msU_{\infty} \overline{\square} \Hsp$ with $z^*\Delta(\xi)(1\otimes \Phi_C)=0$ for all $\xi \in \mcE$, hence also $z^*\Delta(\xi)(x\otimes \Phi_C)= 0$ for all $\xi\in \mcE\Phi_C,x\in \msU_{\infty}$. However, it is not hard to show that the $\Delta(\xi)(x\otimes \Phi_C)$ are $\sigma$-weakly closed in $\msU_{\infty} \overline{\otimes} \Hsp$, leading to $z=0$, a contradiction.
\end{proof} 

It is further straightforward to check that the maps $\epsilon_{\Hsp},\eta_{\mcE}$ in Proposition \ref{PropEqDefCounit} and Proposition \ref{PropEqDefUnit} indeed define an adjoint pairing between $F_{\infty}$ and $G_{\infty}$, and hence turn $G_{\infty}$ into a quasi-inverse for $F_{\infty}$, finishing the proof of Theorem \ref{TheoEquiCats}.

\section{Relatively invariant functionals on $\mcU_q(\mfsl(2,\R)_t)$} 

Let $0<q<1$, and let $A=\mcO_q(SU(2))$ be the universal Hopf algebra generated by $\alpha,\beta,\gamma,\delta$ such that 
\begin{subequations}\label{EqDefRelSUq}
\begin{gather}
\alpha\beta = q\beta \alpha,\quad \alpha \gamma = q\gamma \alpha,\quad \beta\gamma = \gamma \beta,\quad \beta\delta =q\delta\beta,\quad \gamma \delta = q\delta\gamma,\\
\alpha\delta - q\beta\gamma = 1,\qquad \delta \alpha -q^{-1}\gamma \beta = 1. 
\end{gather}
\end{subequations}
It is well-known that $\mcO_q(SU(2))$ is a CQG Hopf $*$-algebra by the $*$-structure 
\[
\begin{pmatrix} \alpha^* & \gamma^* \\ \beta^* & \delta^* \end{pmatrix} = \begin{pmatrix} \delta & -q^{-1}\beta \\ -q\gamma & \alpha\end{pmatrix}
\]
and the coproduct making $U = \begin{pmatrix} \alpha & \beta \\ \gamma & \delta \end{pmatrix}$ a corepresentation \cite{Wor87a,Wor87b}. Moreover, it is non-degenerately paired with the Hopf $*$-algebra $U=U_q(\mfsu(2))$ generated by elements $e,f,k^{\pm 1}$ with universal relations 
\[
ke = q^2ek,\qquad kf = q^{-2}fk,\qquad ef-fe = \frac{k-k^{-1}}{q-q^{-1}}
\]
and Hopf $*$-algebra structure determined by 
\[
e^* = fk,\qquad f^* = k^{-1}e,\qquad k^* =k,
\]
\[
\Delta(e) = e\otimes 1 + k\otimes e,\qquad \Delta(f) = f\otimes k^{-1}+ 1\otimes f,\qquad \Delta(k) = k\otimes k.
\]
The admissible $*$-representations of $U_q(\mfsu(2))$ are labeled by half-integers. Concretely, we write $(V_{n/2},\pi_{n/2})$ for the $n+1$-dimensional unitary representation of $U_q(\mfsu(2))$, with orthonormal basis $\xi_{0},\xi_1,\ldots,\xi_n$ and action
\[
\pi_{n/2}(k)\xi_p =  q^{n-2p} \xi_p,
\]
\[
(q^{-1}-q)\pi_{n/2}(e)\xi_p = \sqrt{(q^{-n+p-1}-q^{n-p+1})(q^{-p}-q^p)}q^{\frac{n}{2}-p+1}\xi_{p-1},
\]
\[
(q^{-1}-q)\pi_{n/2}(f)\xi_p = \sqrt{(q^{-n+p}-q^{n-p})(q^{-p-1}-q^{p+1})} q^{-\frac{n}{2}+p} \xi_{p+1}. 
\]
We denote by $U_{n/2} \in \End(\C^{n+1})\otimes \mcO_q(SU(2))$ the associated unitary corepresentation of $\mcO_q(SU(2))$, and for $\xi,\eta\in \C^{n+1}$ by $U_{n/2}(\xi,\eta)\in \mcO_q(SU(2))$  its matrix coefficients.

Write $\msU = \msU_q(\mfsu(2)) = \Lin(\mcO_q(SU(2)))$ for the dual convolution $*$-algebra. It is well-known that we can express the modular element $\delta_A \in \msU_q(\mfsu(2))$ for $\mcO_q(SU(2))$ by 
\begin{equation}\label{Eqdelk}
\delta_A^{1/2} = k. 
\end{equation}

Consider now for $t\in \R$ the element
\begin{equation}\label{EqStarCoidU}
B_t = q^{-1/2}(e-fk) - i(q-q^{-1})^{-1}tk.  
\end{equation}
Then $B_t^* = -B_t$ and
\[
\Delta(B_t) = q^{-1/2}(e-fk)\otimes 1 + k\otimes B_t,
\]
so that $B_t$ generates a $*$-invariant left coideal $I = U_q(\mfk_t)$ of $U$. The stabilizer dual $I^{\perp} = B = \mcO_q(S_t^2)$ can be identified with one of the \emph{Podle\'{s} spheres} \cite{Pod87}. We recall from \cite[Theorem 4.3]{Koo93} (see also \cite[Theorem 2.1]{DCDz21}) that the spectrum of $\pi_{n/2}(iB_{t})$ equals the set
\[
\Spec(\pi_{n/2}(iB_{t})) =\left\{[a+n-2p]\mid p=0,1,\ldots,n\right\}
\]
when $t =q^{a}-q^{-a}$ and where $[x] = \frac{q^x-q^{-x}}{q-q^{-1}}$ for $x\in \R$. It hence follows that $\mcI = \mcU_q(\mfk_t)$ will be isomorphic to the $*$-algebra of functions with finite support on the set $\{[a+n]\mid n \in \Z\}$. We denote by $e_{[a+n]}$ the Dirac function at $[a+n]$. 

\begin{Theorem}\label{TheoInvInt}
There exists a $\delta_A^{-1/2}$-invariant integral $\psi_t: \mcU_q(\mfk_t)\rightarrow \C$, concretely determined by
\[
\psi_t(e_{[a+n]}) =  \frac{q^{a+n}+q^{-a-n}}{q^a + q^{-a}},\qquad \forall n\in \Z. 
\]
\end{Theorem}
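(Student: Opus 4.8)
The strategy is to identify the character $g$ for which $\Phi_C$ is $g$-balanced, verify the balance condition, and then read off the formula for $\psi_t$ from the explicit description of $\mcI$. By the Corollary following Theorem~\ref{TheoInvFuncIsChar} (the ``$\delta_A^{-1/2}$-invariant integral'' corollary), a $\delta_A^{-1/2}$-invariant integral exists on $\mcI$ precisely when $\Phi_C\delta_A^{1/2}\Phi_C = \Phi_C$, i.e.\ $\Phi_C k\Phi_C = \Phi_C$ using \eqref{Eqdelk}; and in that case the integral is automatically tracial. So the first thing I would do is verify this identity. This amounts to showing that the $\msI$-fixed vector in each $V_{n/2}$ is an eigenvector of $\pi_{n/2}(k)$ of eigenvalue $1$ up to the projection, or more precisely that $\langle \Phi_C\xi, k\, \Phi_C\xi\rangle = \|\Phi_C\xi\|^2$. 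Concretely: $(\Phi_C)_{n/2}$ is the orthogonal projection onto $\ker(\pi_{n/2}(B_t))$ (equivalently onto the $0$-eigenspace of $\pi_{n/2}(iB_t)$, since $0 \in \Spec(\pi_{n/2}(iB_t))$ iff $[a+n-2p]=0$ for some $p$, i.e.\ iff $n$ is even and $p = (n+a... )$ — one needs $a \equiv 0$; but in fact $\Phi_C$ should always be rank one here and its range is spanned by a specific spherical vector). I would compute this fixed vector explicitly using Koornwinder's diagonalization \cite{Koo93}, and check by a direct (if slightly tedious) computation with the weights of $\pi_{n/2}(k)$ that $k$ fixes it. Actually the cleanest route: $B_t^* = -B_t$, so $B_t$ is skew-adjoint, hence $\ker B_t = \ker B_t^*$; combined with the coproduct formula $\Delta(B_t) = q^{-1/2}(e-fk)\otimes 1 + k \otimes B_t$ and the defining relation $B = \{b : \pi_C(b_{(1)})\otimes b_{(2)} = 1_C \otimes b\}$, one extracts that the spherical vector is an eigenvector for $k$ — this is where the specific combination $-i(q-q^{-1})^{-1}tk$ in $B_t$ is essential. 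Establishing $\Phi_C k \Phi_C = \Phi_C$ is the main obstacle, and it is really a computation about the Podle\'{s} spherical functions.

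**Extracting the formula.** Once the balance condition holds, Theorem~\ref{TheoInvFuncIsChar} (with $g = \delta_A^{-1/2}$, which is positive) gives the canonical $\psi_{\mcI}$ determined by $\psi_{\mcI}(\Phi_C(a-)) = \tau(a, \delta_A^{-1/2}) = \tau(a, k^{-1})$, and the proof of Theorem~\ref{TheoInvFuncIsChar} further shows $\psi_{\mcI} = \sum_\beta \Tr_\beta(\mu_\beta \,-\,)$ for the positive $\mu \in \msI$ implementing $\kappa_{-i} = \Ad_{(g\delta_A^{1/2})^{-i}} = \Ad_1 = \id$ on $\mcI$; hence $\mu$ is central in $\msI$, i.e.\ $\mu = \sum_n \mu_n e_{[a+n]}$ with $\mu_n > 0$ scalars (each $\Gsp_\beta$ is $1$-dimensional here, as $\mcI$ is commutative). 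So $\psi_t(e_{[a+n]}) = \mu_n$, and it remains to pin down the $\mu_n$. For this I would use the normalization $\psi_{\mcI}(\Phi_C) = 1$ together with the relation $\Phi_C(a-)$ for suitable $a$: choosing $a$ to be a matrix coefficient $U_{n/2}(\xi_0^{\mathrm{sph}}, \eta)$ with $\xi_0^{\mathrm{sph}}$ the spherical vector, one gets $\Phi_C(a-) = $ (a multiple of) the element of $\mcI$ supported on the spectral values of $\pi_{n/2}(iB_t)$, namely $[a+n], [a+n-2], \ldots, [a-n]$, weighted by the components of the spherical vector in the eigenbasis. Pairing with $k^{-1}$ on the $A$-side and matching against $\sum \mu_m e_{[a+m]}$ on the $\mcI$-side yields a recursion for the $\mu_n$; solving it (and using the quantum-dimension/Clebsch–Gordan bookkeeping for $\mcO_q(SU(2))$) should give $\mu_n = \dfrac{q^{a+n}+q^{-a-n}}{q^a+q^{-a}}$. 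Alternatively, and perhaps more efficiently, one can bypass the recursion: the trace $\psi_t$ being $\delta_A^{-1/2}$-invariant means $\psi_t(x \lhd a) = \tau(a,k^{-1})\psi_t(x)$, and applying this to $x = e_{[a]} = \Phi_C$ with $a$ running over a generating set of $B = \mcO_q(S_t^2)$ that shifts the spectral parameter $n \mapsto n\pm 1$ directly produces the two-term relation $\mu_{n+1}$ in terms of $\mu_n$ with ratio $\dfrac{q^{a+n+1}+q^{-a-n-1}}{q^{a+n}+q^{-a-n}}$, whose solution with $\mu_0 = 1$ is the claimed formula.

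**Summary of steps in order.** (1) Recall $\delta_A^{1/2} = k$ from \eqref{Eqdelk} and identify $\mcI = \mcU_q(\mfk_t)$ with finitely-supported functions on $\{[a+n] : n \in \Z\}$ via \cite{Koo93}. (2) Compute the spherical vector $\xi^{\mathrm{sph}}_{n/2} \in V_{n/2}$ spanning $\ker \pi_{n/2}(B_t)$, using skew-adjointness of $B_t$ and the explicit action of $e,f,k$; deduce $\pi_{n/2}(k)\xi^{\mathrm{sph}}_{n/2} = \xi^{\mathrm{sph}}_{n/2}$ — wait, more carefully, deduce the identity $\Phi_C k \Phi_C = \Phi_C$ at the level of $\msU$, equivalently $\langle \xi^{\mathrm{sph}}, (k-1)\xi^{\mathrm{sph}}\rangle = 0$ for the unit spherical vectors. (3) Invoke the ``$\delta_A^{-1/2}$-invariant integral'' Corollary to conclude such a $\psi_t$ exists and is tracial, with $\psi_t(\Phi_C) = 1$. (4) Use $\delta_A^{-1/2}$-invariance against generators of $\mcO_q(S_t^2)$ raising/lowering $n$ to get the recursion $\mu_{n+1}(q^{a+n}+q^{-a-n}) = \mu_n(q^{a+n+1}+q^{-a-n-1})$ (and symmetrically for $n-1$). (5) Solve with $\mu_0 = 1$ to obtain $\psi_t(e_{[a+n]}) = (q^{a+n}+q^{-a-n})/(q^a+q^{-a})$. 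The technical heart is step (2): verifying the $k$-balance of the Podle\'{s} spherical projection; everything downstream is bookkeeping with $q$-numbers and the known representation theory of $\mcO_q(SU(2))$.
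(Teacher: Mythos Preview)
Your overall strategy---establish existence via the balance criterion from Theorem~\ref{TheoInvFuncIsChar} (or its corollary), then compute the weights $\mu_n$---matches the paper, but there is one genuine error and the execution differs in both parts.

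\textbf{The error.} $\Phi_C$ does \emph{not} project onto $\ker\pi_{n/2}(B_t)$. The $\msI$-fixed vectors satisfy $B_t\xi=\varepsilon(B_t)\xi$, and $\varepsilon(B_t)=-i(q-q^{-1})^{-1}t=-i[a]\neq 0$ for $t\neq 0$. So $(\Phi_C)_{n/2}$ is the projection onto the $[a]$-eigenspace of $iB_t$, which is nonzero precisely for even $n$. Your parenthetical confusion (``one needs $a\equiv 0$'') comes from this slip. It is easily repaired, but it affects what you would actually compute in step~(2).

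\textbf{Existence.} You plan to verify $\Phi_C k\Phi_C=\Phi_C$ directly, i.e.\ $\langle\xi^{\mathrm{sph}},k\xi^{\mathrm{sph}}\rangle=\|\xi^{\mathrm{sph}}\|^2$ for the spherical ($[a]$-eigen-) vector in every integer-spin $V_m$. That is an infinite family of identities, and the \emph{middle} eigenvector of $iB_t$ has no closed form as tidy as the extremal ones. The paper instead checks the equivalent character identity $\tau(b,k^{-1})=\varepsilon(\sigma_A\sigma_B^{-1}(b))$ from Theorem~\ref{TheoInvFuncIsChar}. Since both sides are characters on $B$ and $B$ is generated by its spin-$1$ part, this reduces to a single $3$-dimensional computation: one writes down $\xi^{(t;1)}=(q^{-1/2},(q+q^{-1})^{-1/2}it,q^{1/2})^{T}$, observes via self-duality that $R(\Phi_C)$ projects onto $\C(q^{1/2},(q+q^{-1})^{-1/2}it,q^{-1/2})^{T}$, and checks that $\delta_A^{-1/4}R(\Phi_C)\delta_A^{-1/4}$ fixes $\xi^{(t;1)}$.

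\textbf{Formula.} Your recursion idea is reasonable but underspecified: acting on $e_{[a+n]}$ by spin-$\tfrac12$ matrix coefficients generically produces a three-term relation, not two, and you have not identified which particular element of $A$ yields the clean ratio you quote. The paper avoids any recursion. From \eqref{EqSumNorm} one reads off directly that $\mu_{[a+n]}=\langle\xi,k\xi\rangle$ for $\xi$ any unit $[a+n]$-eigenvector of $iB_t$. Choosing $\xi$ inside $V_{|n|/2}$ makes it the \emph{extremal} eigenvector, for which there is an explicit formula $\xi_{n/2}^+=\sum_p(-iq^{-(a+n)})^p\sqrt{(q^{2n};q^{-2})_p/((-1)^p(q^{-2};q^{-2})_p)}\,\xi_p$; two applications of the $q$-binomial theorem then give $\|\xi_{n/2}^+\|^2=(-q^{-2a};q^{-2})_n$ and the claimed value of $\mu_{[a+n]}$.
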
 
\begin{proof}
To see that a $\delta_A^{-1/2}$-invariant integral exists, it is by Theorem \ref{TheoInvFuncIsChar} (and the identity $k =\delta_A^{1/2}$) sufficient to verify that 
\begin{equation}\label{Eqksig}
\tau(b,k^{-1}) = \varepsilon(\sigma_A\sigma_B^{-1}(b)),\qquad b\in \mcO_q(S_t^2).
\end{equation}
Now $\mcO_q(S_t^2)$ is generated by its spin one spectral subspace, which is spanned by the $U_1(\xi^{(t;1)},\eta)$ with $\xi^{(t;1)} = \begin{pmatrix} q^{-1/2} \\ (q+q^{-1})^{-1/2}it \\ q^{1/2}\end{pmatrix}$. Noting that the $V_{n/2}$ are self-dual by the unitary transformation $u_n: \overline{\xi}_p \mapsto (-1)^p\xi_{n-p}$, it follows that $R(\Phi_C)$ is the orthogonal projection onto 
\[
u_2 \C\overline{\xi^{(t;1)}} =  \C \begin{pmatrix} q^{1/2} \\ (q+q^{-1})^{-1/2}it \\ q^{-1/2}\end{pmatrix},
\]
and hence $\delta_A^{-1/4}R(\Phi_C)\delta_A^{-1/4}$ leaves $\xi^{(t;1)}$ invariant. From \eqref{EqModAutB} and  \eqref{EqDelasFun} we then find that \eqref{Eqksig} holds. 

Write now $\psi_t(e_{[a+n]}) = \mu_{[a+n]}$. It then follows from \eqref{EqSumNorm} (with $c_{\beta}=1$) that 
\[
\mu_{[a+n]} = \langle \xi,k\xi\rangle
\]
for $\xi$ any normalized $[a+n]$-eigenvector of $iB_t$. Now it is easily computed that for $n\geq 0$ we have in $V_{n/2}$ the $[a+n]$-eigenvector
\[
\xi_{n/2}^+ = \sum_{p=0}^n (-iq^{-(a+n)})^p \sqrt{\frac{(q^{2n};q^{-2})_p}{(-1)^p (q^{-2};q^{-2})_p}}\xi_p,
\]
where we use the Pochhammer symbol $(x;q^2)_n = (1-x)(1-q^2x)\ldots (1-q^{2n-2}x)$. By the $q$-binomial theorem, we compute the norm of $\xi_{n/2}^+$ to be
\[
\|\xi_{n/2}^+\|^2 = (-q^{-2a};q^{-2})_n.
\]
Another application of the $q$-binomial theorem then leads to 
\[
\mu_{[a+n]} = \frac{\langle \xi_{n/2}^+ ,k\xi_{n/2}^+\rangle}{\|\xi_{n/2}^+\|^2} = \frac{q^{a+n}+q^{-a-n}}{q^a + q^{-a}}.
\]
The proof for $n<0$ is similar. 
\end{proof}

\begin{Rem}
The previous theorem can also be proven by observing that $\mcO_q(S_t^2)\subseteq \mcO_q(SU(2))$ is a co-Gelfand pair. In fact, this is true also in higher rank for any coideal obtained from quantum symmetric pair coideals, using \cite[Theorem 4.2]{Let03} (note that the construction there needs to be tweaked a little to obtain $*$-invariance, see \cite[Appendix B]{DCNTY19}). As we do not develop this result further in this paper, we refrain from spelling out further details.  
\end{Rem}

By the theory developed in Section \ref{SecInfFuncDD}, we can consider the Drinfeld double coideal $\mcD(\mcO_q(S_t^2),\mcU_q(\mfk_t))$. As motivated in \cite{DCDz21} and in the second section of this paper (see also \cite{Pus93,PW94,BR99} for similar motivations in the context of realifications of complex quantum groups), we can think of this $*$-algebra as a $q$-deformation of the convolution $*$-algebra of compact support continuous functions on (a conjugated version of) $SL(2,\R)$. To be in accordance with earlier notation, we will the write this $*$-algebra as
\[
\mcU_q(\mfsl(2,\R)_t) =  \mcD(\mcO_q(S_t^2),\mcU_q(\mfk_t)).
\]

From Corollary \ref{CorDrinfDoubTrace}, we obtain the following.

\begin{Cor}
Let $\Phi_A$ be the Haar state on $\mcO_q(SU(2))$. Then
\[
\varphi_{t}: \mcU_q(\mfsl(2,\R)_t) \rightarrow \C,\qquad e_{[a+n]}b \mapsto \frac{q^{a+n}+q^{-a-n}}{q^a + q^{-a}}\Phi(b)
\]
is a positive, tracial, $k^{-1}$-invariant functional. 
\end{Cor}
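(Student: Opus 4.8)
The plan is to derive the statement directly from Corollary~\ref{CorDrinfDoubTrace}, applied to the coideal $B = \mcO_q(S_t^2) \subseteq A = \mcO_q(SU(2))$ with stabilizer dual $\mcI = \mcU_q(\mfk_t)$ and the positive grouplike element $g = \delta_A^{-1/2}$, which by \eqref{Eqdelk} equals $k^{-1}$. Recall that $\mcU_q(\mfsl(2,\R)_t) = \mcD(\mcO_q(S_t^2),\mcU_q(\mfk_t))$ by definition, and that $\varphi_{\mcD}$ below denotes the functional of Proposition~\ref{PropInvFunct}.

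First I would observe that Theorem~\ref{TheoInvInt} has already done the essential work. Its proof establishes \eqref{Eqksig}, i.e.\ $\tau(b,k^{-1}) = \varepsilon(\sigma_A\sigma_B^{-1}(b))$ for all $b \in \mcO_q(S_t^2)$, which is precisely the second condition of Theorem~\ref{TheoInvFuncIsChar} for the character $g = k^{-1}$. By the equivalence proved there, $\Phi_C$ is therefore $\delta_A^{-1/2}$-balanced. Since $k^{-1}$ is positive in $\msU_q(\mfsu(2))$, the associated $g$-invariant functional $\psi_t$ is a genuine integral (positive and normalized), and it is exactly the functional $\psi_{\mcI} = \sum_{\beta}\Tr(\mu_{\beta}-)$ entering Proposition~\ref{PropInvFunct}; by \eqref{EqSumNorm} (with $c_\beta = 1$) its weights are $\mu_{[a+n]} = \psi_t(e_{[a+n]}) = \frac{q^{a+n}+q^{-a-n}}{q^a+q^{-a}}$, as computed in Theorem~\ref{TheoInvInt}.

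Next I would invoke Corollary~\ref{CorDrinfDoubTrace}: as $\Phi_C$ is $\delta_A^{-1/2}$-balanced, the functional $\varphi_{\mcD}$ is positive, tracial, and $\delta_A^{-1/2}$-invariant on $\mcD(B,\mcI)$. Using $\delta_A^{-1/2} = k^{-1}$, this $\delta_A^{-1/2}$-invariance is the same as $k^{-1}$-invariance, i.e.\ $\varphi_{\mcD}(x \rhd (yb) \lhd a) = \varepsilon(x)\varphi_{\mcD}(yb)\tau(a,k^{-1})$. Finally, the defining formula $\varphi_{\mcD}(yb) = \psi_{\mcI}(y)\Phi_A(b)$ with $y = e_{[a+n]}$ and $\psi_{\mcI} = \psi_t$ gives $\varphi_{\mcD}(e_{[a+n]}b) = \frac{q^{a+n}+q^{-a-n}}{q^a+q^{-a}}\Phi_A(b) = \varphi_t(e_{[a+n]}b)$, so $\varphi_{\mcD} = \varphi_t$ and we are done.

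The argument involves no genuine obstacle; the only points that need a moment's care are matching the grouplike element $g$ required by Corollary~\ref{CorDrinfDoubTrace} with the element $k^{-1}$ occurring implicitly through \eqref{Eqksig}, and noting that positivity of $k^{-1}$ is exactly what upgrades the relatively invariant functional $\psi_t$ of Theorem~\ref{TheoInvInt} to a relatively invariant \emph{integral}, so that Proposition~\ref{PropInvFunct} and Corollary~\ref{CorDrinfDoubTrace} apply without modification.
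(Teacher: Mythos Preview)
Your proof is correct and takes essentially the same approach as the paper, which simply states that the corollary follows from Corollary~\ref{CorDrinfDoubTrace}. You have merely spelled out in detail why the hypotheses of Corollary~\ref{CorDrinfDoubTrace} are satisfied (via Theorem~\ref{TheoInvInt} and Theorem~\ref{TheoInvFuncIsChar}) and why the resulting functional $\varphi_{\mcD}$ coincides with $\varphi_t$.
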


Let $L^2_q(S_t^2)$ be the GNS-space of $\mcO_q(S_t^2)$ with respect to $\Phi$, and $L^2_q(K_t)$ the GNS-space of $\mcU_q(\mfk_t)$ with respect to $\psi_t$. Then we can form the Hilbert space 
\[
L^2_q(SL(2,\R)_t) = L^2_q(S_t^2) \otimes L^2_q(K_t)
\] 
and endow it with the regular $*$-representation of $\mcU_q(\mfsl(2,\R)_t)$ as in Definition \ref{DefRegRep}. 

We have thus obtained a non-trivial $q$-deformation of the regular representation of $SL(2,\R)$. The stage is now set to understand the decomposition of this regular representation $\pi_{\reg}$ in terms of the irreducible representations of $U_q(\mfsl(2,\R)_t)$ classified in \cite{DCDz21}, and to obtain an associated Plancherel formula for the relatively invariant functional on $\mcU_q(\mfsl(2,\R)_t)$. This detailed study of $\pi_{\reg}$ will be left for a future occasion.

\end{document}